\theoremstyle{plain}
\newtheorem{theorem}{Theorem}[section]
\newtheorem{proposition}[theorem]{Proposition}
\newtheorem{lemma}[theorem]{Lemma}
\newtheorem{corollary}[theorem]{Corollary}
\newtheorem{assumption}[theorem]{Assumption}
\theoremstyle{definition}
\newtheorem{definition}[theorem]{Definition}
\newtheorem{notation}[theorem]{Notation}
\newtheorem{remark}[theorem]{Remark}
\newtheorem{example}[theorem]{Example}
\newcommand{\N}{\ensuremath{\mathbb N_0}} 
\newcommand{\R}{\ensuremath{\mathbb R}} 
\DeclareMathOperator{\divergence}{div}
\renewcommand{\div}{\ensuremath{\divergence}}
\DeclareMathOperator*{\esssup}{ess\,sup}
\DeclareMathOperator*{\essinf}{ess\,inf}
\newcommand{\dd}{\ensuremath{\mathrm d}}
\newcommand{\T}{\ensuremath{{\mathbb T}}}
\newcommand{\X}{\ensuremath{{\mathbf X}}}
\newcommand{\Y}{\ensuremath{{\mathbf Y}}}
\newcommand{\Z}{\ensuremath{{\mathbf Z}}}
\newcommand{\Q}{\ensuremath{{\mathbf Q}}}
\newcommand{\LL}{\ensuremath{{\mathbb L}}}
\newcommand{\XX}{\ensuremath{{\widehat X}}}
\newcommand{\XXX}{\ensuremath{{\widehat X}^{\odot2}}}
\newcommand{\B}{\ensuremath{{\mathbf B}}}
\newcommand{\CC}{\ensuremath{{\mathcal C}^{1+}}}
\newcommand{\sgn}{\ensuremath{{\mathrm{sgn}\,}}}
\newcommand*\bigcdot{\mathpalette\bigcdot@{.5}}
\newcommand*\bigcdot@[2]{\mathbin{\vcenter{\hbox{\scalebox{#2}{$\m@th#1\bullet$}}}}}
\newcommand{\bb}[2]{\ensuremath{(#1\bigcdot#2)}}
\newcommand{\bbb}[2]{\ensuremath{(#1\odot#2)}}
\begin{document}

\title{Quasilinear rough partial differential equations with transport noise}

\date{}

\author{Antoine Hocquet}

\affil{\small Technische Universit\"at Berlin, Institut f\"ur Mathematik,  Stra\ss e des 17. Juni 136, 10623 Berlin, Germany}

\maketitle

\unmarkedfntext{\textit{Mathematics Subject Classification (2020) ---} Primary 60L50, 60L20; Secondary 35K59, 35A15, 60H15}

\unmarkedfntext{\textit{Keywords and phrases ---} rough partial differential equations, rough paths,  quasilinear equations, variational methods, stochastic partial differential equations, renormalized solutions}

\unmarkedfntext{\textit{Mail}: antoine.hocquet@tu-berlin.de }

\begin{abstract}
We investigate the Cauchy problem for a quasilinear equation with transport rough input of the form
$\dd u-\partial _i(a^{ij}(u)\partial _j u)\dd t =\dd \X_t^i(x)\partial_i u_t,$ $u_0\in L^2$ on the torus $\mathbb T^d$,
where $\X$ is two-step enhancement of a family of coefficients $(X^i_t(x))_{i=1,\dots d}$, akin to a geometric rough path with H\"older regularity $\alpha>1/3.$
Using energy estimates, we provide sufficient conditions that guarantee existence in any dimension, and uniqueness in the case when $X$ is divergence-free. We then focus on the one-dimensional scenario, with slightly more regular coefficients. Improving the a priori estimates of the first results, we prove existence of a class of solutions whose spatial derivatives satisfy a Ladyzhenskaya-Prodi-Serrin type condition.
Uniqueness is shown in the same class, by obtaining an $L^\infty(L^1)$ estimate on the difference of two solutions. The latter is obtained by establishing a link with a certain backward dual equation combined with a (rough) iteration lemma \`a la Moser.
\end{abstract}

\tableofcontents

\section{Introduction}

We consider the Cauchy problem for a quasilinear non-degenerate parabolic rough partial differential equation of the form
\begin{equation}
\left\{
\begin{aligned}
\label{problem}
&\dd u_t - \partial _i(a^{ij}(t,x,u)\partial _j u_t)\dd t = \dd \X^i_t(x)\partial _i u_t,\quad [0,T]\times\T^d,
\\
&u_0=u^0\in L^2(\T^d)
\end{aligned}\right.
\end{equation}
(with implicit summation over repeated indices),
where $\dd \X$ denotes time-integration with respect to a geometric enhancement
of an $\alpha $-H\"older ``rough sheet'' $t\mapsto X_t(\cdot)$, consisting in a pair
\begin{equation}\label{latter_pair}
\X=(X^i_t(x),\LL_t^i(x))_{i=1\dots d},\quad t\in[0,T],\enskip x\in \mathbb T^d\,,
\end{equation}
where H\"older regularity is to be understood with respect to the variable $t$ (for almost every $x\in \mathbb T^d$) and throughout the paper, 
\begin{equation}\label{regularity}
\alpha >1/3\,.
\end{equation}
A possible motivation for the study of \eqref{problem} lies in HJB equations associated with stochastic optimal control problems, specifically in presence of mean-field type interactions. The rough term $X$ accounts for a fixed trajectory of an external, independent source of noise that can be considered as an observation. See for instance \cite{buckdahn2017,bonnans2018} and the references therein.

In the context of \eqref{problem}, the pair \eqref{latter_pair} can be built on Lyons' insight that the rough input $X$ must be accompanied with additional information, such as iterated integrals of $X$ with itself \cite{lyons1998differential,lyons2002system}. 
For a path living in a functional space $V$ over the $x$-variable (as is the case of $X$ here with $V=W^{3,\infty}(\mathbb T^d)$), iterated integrals are usually understood as elements of the tensor product $V\otimes V$, and hence can thought of as functions of \textit{two} spatial variables $x,y\in \mathbb T^d.$
Here, the enhancement we rely on is much simpler, since the second component $\LL^i$ should be understood as a prescribed value for the generally ill-defined iterated integral
\begin{equation}\label{L_intro}
\LL^i_{st}(x)=\int_s^t \dd X_r^\mu (x) (\partial _\mu X^i_r(x)-\partial _\mu X^i_s(x))\,.
\end{equation}
This simplification occurs as a result of the presence of symmetries in \eqref{problem}, which are themselves consequences of Schwarz' theorem that $\partial_{ij}=\partial_{ji}$, together with the geometricity of $X$. For related investigations, we refer to \cite[Section 2]{hocquet2018ito}.
Note that in Section \ref{sec:parabolic} below we shall also employ the more suggestive notation $\bb{X^\mu}{\partial _\mu X}^i$ instead of $\LL^i$,
but it will be always considered as part of the data. 
As for the left hand side in \eqref{problem}, we assume the following strong parabolicity condition for $a^{ij}.$
\begin{assumption}
	\label{ass:A}
	$a\colon[0,T]\times \mathbb T^d\times\R\to \mathscr L(\R^d,\R^d)$
	is symmetric, measurable, $C^1$ with respect to the third variable, and such that
	\begin{equation}
	\label{coercivity}
	\lambda |\xi |^2\leq a^{ij}(t,x,z)\xi ^i\xi ^j\leq \lambda ^{-1}|\xi |^2,
	\end{equation} 
	for some $\lambda>0,$ independent of $\xi \in \R^d$ and $(t,x,z)\in [0,T]\times\mathbb T^d\times\R.$ 
\end{assumption}

Our main results deal with existence and uniqueness of solutions to the problem \eqref{problem}, under the minimal requirement that the solution belongs to the energy space $L^\infty(L^2)\cap L^2(W^{1,2})$ and under various conditions on the rough forcing term.
It is certainly possible to generalize our results to a broader class of quasilinear equations for instance by adding a flux term of the form $\div(F(u))\dd t$, where
$F\equiv(F^1,\dots , F^d)\colon \R\to \R^d$
is $C^1$ and bounded.
Since this would complicate the algebra without requiring any striking argument, we however restrain from doing so and focus on the ansatz \eqref{problem}.
Moreover, working with \textit{branched} rough paths, the geometricity assumption as well as the fact that $\alpha >1/3$ could certainly be weakened. See for instance \cite{bellingeri2020transport} for an attempt in this direction, in the context of a (pure) transport equation.

\paragraph*{Comparison with existing approaches (and further motivations)}
As is widely acknowledged, Lyons' theory of rough paths works quite well in finite-dimensions, where rough paths enhancements (i.e.\ \textit{signatures}) of a given path of low-regularity are used as a tool to solve controlled differential equations in a robust way. 
For inputs which have a stochastic nature (Gaussian process or Markov, provided a meaningful stochastic area can be built), it provides an apparatus to construct stochastic flows, prove large deviations estimates or support theorems (among other things).
Because the concept of signature does not seem to generalize well to the case of several variables, the application of Lyons' theory to rough \textit{partial} differential equations is on a case-by-case basis.
In particular, a complete understanding of an appropriate analytic framework (similar to \cite{krylov1999analytic} in the stochastic case) is still missing.
 The present contribution is an attempt to set up such a framework by following the lines of the ``variational approach'' developed in \cite{bailleul2017unbounded,deya2016priori,hocquet2017energy,hofmanova2018navier,hocquet2018ito,hofmanova2020vorticity}.

The problem \eqref{problem} was independently investigated by Friz, Caruana and Oberhauser \cite{caruana2011rough,friz2014rough}, under different assumptions on the coefficients (see also \cite{seeger2018}). 
These authors considered a non-linear equation of the form
\begin{equation}
\label{quasilin_viscous}
\dd u - F(t,x,u,Du,D^2u) \dd t = H_i(t,x,u,D u)\dd \Z^i\,\quad\text{on}\enskip[0,T]\times\R^d
\end{equation}
where the left hand side $F$ is continuous and degenerate-elliptic, whereas $H(t,x,r,p)$ is continuous and affine linear with respect to $r\in\R$ and $p\in\R^d$.
Their analysis relies on a viscosity approach in the spirit of the pionneer work of Lions and Souganidis \cite{lions1998fully}, combined with flow transformation techniques. Variants of \eqref{quasilin_viscous} include the case of a non-linear rough flux, as investigated for instance by Lions, Perthame and Souganidis \cite{lions2013scalar}. In this regard, we should mention the works of Gess and Souganidis \cite{gess2015scalar,gess2017parabolic,gess17cpam} which make use of test functions and Sobolev spaces (similar to our setting), based on a formalism of pathwise entropy/kinetic solutions. 
In a later contribution of Fehrman and Gess \cite{fehrman2019well} an analogous model is considered without relying on a flow-transformation, but considering instead a certain family of locally fluctuating test functions. 

It should be observed that the technique of \textit{renormalized solution} that we use below in order to obtain $L^1$ estimates (see Theorem \ref{thm:renorm}) carries some similarities with the type of arguments encountered in the theory of entropy/kinetic solutions.
In fact, as is classical (see \cite{perthame2002kinetic} and the references therein), renormalized solutions can be seen as a particular case of the latter. On the other hand, it is possible to show that the solutions of finite energy considered in theorems \ref{thm:existence}, \ref{thm:existence_2} below are alternatively characterized by the renormalization property \eqref{chain_rule_thm}. 
Though some of our main results might therefore overlap with that of the previous authors, we point out that our assumptions and conclusions are of a different nature. 
For instance, the formalism we rely on to enhance the rough coefficient $X_t(x)$ to the input $\X$ in \eqref{problem} is not standard, in that \eqref{L_intro} differs from what is encountered in usual rough paths theory \cite{friz2014course}. It is shaped by the formalism of ``unbounded rough driver'' introduced in \cite{bailleul2017unbounded,deya2016priori} and subsequently analysed in \cite{hocquet2017energy,hocquet2018ito,hofmanova2018navier}. 
The main reason for working with such objects is in fact related to our aim to exploit certain symmetries due to the transport nature of the noise in \eqref{problem}. In the present context, it appears that iterated integrals of the form $\int_s^t X_{sr}\otimes\dd X_r$ (as suggested by the usual theory) are not exactly the right quantities to look at. See Example \ref{example:bracket} for a related discussion. 
This aspect of our treatment is further emphasized in the ``weak geometric relations'' (Lemma \ref{lem:weak_geo} below), which play a central role in the proof that uniqueness holds when $X^i_t(x)$ is divergence-free (see Theorem \ref{thm:uniqueness}). 
In a stochastic context, this echoes the works of Ciotir and T\"olle \cite{ciotir2016nonlinear,tolle2020stochastic}, where commutator relations in terms of Killing vector fields are derived, leading to similar cancellations.

If there is a merit in our approach, it is its relative simplicity and its compliance with classical PDE techniques (such as energy estimates, maximum principles, duality arguments etc.; see \cite{ladyzhenskaya1968linear}).
Moreover, our notion of solution appears to blend well with Gubinelli's concept of \textit{controlled paths} \cite{gubinelli2004controlling}, defined here in terms of suitable Sobolev spaces (see Definition \ref{def:controlled}). 
For all these reasons, the proofs below are essentially independent from the existing literature on non-linear rough PDEs.

\paragraph*{Stochastic bibliography}
Stochastic analogues of \eqref{problem} constitute a basic motivation for introducing a rough paths formulation, where the noise $\dot X$ is often used to account for physical uncertainties. In this direction, we point out the pioneering work of Denis and Stoica \cite{denis2004general}, allowing for a transport noise close to the rough input in \eqref{problem}. More recently, Munteanu and R\"ockner \cite{munteanu2016total} have studied the so-called \textit{total variation flow} perturbed by a similar gradient noise (see also \cite{ciotir2016nonlinear,barbu2018stochastic,dareiotis2020nonlinear}).
During the past decade, significant progress has been made in the context of rough/stochastic scalar conservation laws (see, e.g., \cite{debussche2010scalar,lions2013scalar,lions2014scalar,friz2014stochastic,hofmanova2015scalar,debussche2016degenerate,gess2018well}, i.e.\ equations of the form
\begin{equation}
\label{conserv}
\dd u - \partial_i(a^{ij}(x,u)\partial_j u)\dd t = \partial _i(F^i(u,x))\circ \dd X+ \Phi(u)\circ \dd W
\end{equation}
where here $W$ denotes some Wiener process with trace-class covariance, $X$ is another source of noise, typically another Wiener process or a rough path (both finite-dimensional in the previous references) and the symbol $\circ$ stands either for the Stratonovitch product between semimartingales, or for the usual product between a controlled path with a (geometric) rough path.
The flux term $F$ is generally assumed to be of class $C^2$ in space, with some polynomial growth on $F''.$
In constrast with the present contribution however, the matrix $a^{ij}(u)$ is allowed to degenerate. This pushes the previous authors to look at the corresponding kinetic formulation, in which case a transport noise of the same form as \eqref{problem} naturally appears. Note that in  the previous references (with the exception of \cite{hofmanova2015scalar}), the authors deal with a version of \eqref{conserv} where a noise term is present either in the flux or in the multipicative form $\Phi(u)\circ dW$.
In the case of a non-degenerate matrix $a^{ij}$,
assuming $X_t=t$, Hofmanov\'a and Zhang \cite{hofmanova2017quasilinear} were able to provide a rather simple proof of existence and uniqueness for \eqref{conserv}. Using entirely different techniques, non-degenerate parabolic sytems with multiplicative noise as above (but without transport term) were recently investigated by Kuehn and Neamtu \cite{kuehn2018pathwise}.

\paragraph*{Singular quasilinear PDEs}
Based on the corresponding semilinear theories 
\cite{hairer2013kpz,hairer2014theory,gubinelli2015paracontrolled},
another major direction of research in the past years 
was the analysis of quasilinear \textit{singular} SPDEs, i.e.\ equations of the form
\begin{equation}
\label{quasilin_singular}
\partial _tu - A(u)u  = F_0(u,\nabla u) + F_1(u)\dot X \,\quad \text{on}\enskip[0,T]\times\T^d
\end{equation}
($F_0$ and $F_1$ are smooth functions),
where this time $\dot X$ lies in the parabolic H\"older space $\mathcal C^\alpha([0,T]\times\mathbb T^d)$ for some negative $\alpha$, and hence is irregular in both time and space. See the works  \cite{otto2016quasilinear,otto2019divergence,furlan2016paracontrolled,bailleul2019quasilinear,gerencser2019solution}.

The two model equations \eqref{problem} and \eqref{quasilin_singular} are, in fact, thoroughly different. First, \eqref{problem} is not really singular, in the sense that no renormalization step is required in the analysis. This fact allows us to take \textit{products} of \eqref{problem} with equations driven by similar rough inputs (see Appendix \ref{app:product}), and thereby to obtain suitable $L^p$-estimates. 
Regarding \eqref{quasilin_singular} however, deriving an equation for non-linear fonctions/products of such solutions remains an open problem (see nevertheless \cite{zambotti2006ito,bellingeri2018ito} for attempts in that direction).
Second, \eqref{problem} and \eqref{quasilin_singular} belong to a different class of PDEs, as can be seen as follows. While in principle, the nonlinearity $F_1$ in \eqref{quasilin_singular} could depend on the gradient of solutions, this would in turn impose important limitations on the time-regularity of $X$. Loosely speaking, such limitations guarantee that \eqref{quasilin_singular} behaves at ``small scales'' as a non-degenerate parabolic equation (see \cite[Chap.~8]{hairer2014theory} for a precise meaning).
This is definitely not the case of \eqref{problem}, where interesting examples show that the latter property can be violated (e.g.\ when $X$ is a fractional Brownian motion with Hurst parameter $H\in(\frac13,\frac12)$, see \cite{hocquet2018ito} for a related discussion). We should however mention the works \cite{delarue2016,cannizzaro2018} where a transport rough term with a \textit{spatial} regularity similar to \eqref{regularity} is considered (informally speaking, this is permitted by the fact that in a parabolic context, spatial regularity ``counts half as much'' as time regularity).

\paragraph{Organization of the paper}
The forthcoming Section \ref{subsec:settings} is devoted to definitions and settings, while in Section \ref{subsec:results} we give our main existence and uniqueness results for \eqref{problem}.
In Section \ref{sec:preliminaries} we give some preliminary results that are useful for the sequel such as the renormalization property or the Rough Gronwall Lemma.
Existence, and the proof of Theorem \ref{thm:existence} will be given in Section \ref{sec:existence}.
The first uniqueness theorem, Theorem \ref{thm:uniqueness} will be proved in Section \ref{sec:uniqueness}.
In Section \ref{sec:parabolic}, solvability and $L^\infty$  bounds for a class of parabolic equations are investigated, the formulation of which are necessary for the understanding of further sections. Similarly, Section \ref{sec:transport} deals with a rough transport equation with additive rough input, whose study is motivated by the proof of the second uniqueness theorem, Theorem \ref{thm:uniqueness}. The proof of the latter will then be addressed in Section \ref{sec:uniqueness_2}, while the second existence result, Theorem \ref{thm:existence_2}, is shown in Section \ref{sec:higher}. Finally, Appendix \ref{app:sewing} recalls the statement of the Sewing Lemma, while Appendix \ref{app:product} is devoted to the product formula between solutions of two distinct rough PDEs.

\subsection{Notation}
Throughout the paper we consider a finite, fixed time horizon $T>0.$
By $\mathbb N,$ we denote the set of natural integers $1,2,\dots,$ and we let $\N:=\mathbb N\cup \{0\}.$ Real numbers are denoted by $\R$ and we also adopt the notation $\R_+:=[0,\infty).$\smallskip

We will work with the usual Lebesgue and Sobolev spaces in the space-like variable: $L^p(\mathbb T^d)$, $W^{k,p}(\mathbb T^d),$ for $k\in\N,$ and $p\in[1,\infty],$ whose corresponding norms will be denoted by ${|\cdot |_{L^p(\mathbb T^d)}},$ ${|\cdot |_{W^{k,p}(\mathbb T^d)}}.$
The shorthand notations $L^p$ and $W^{k,p}$ will be sometimes used for simplicity.
For functions $f$ also depending on the time-like variable, we use the notation 
\[
\|f\|_{L^r(s,t;L^q)}
:=\left(\int_{s}^t\left(\int_ {\T^d} |f_\tau (x)|^q\dd x\right)^{r/q}\dd \tau \right)^{1/r},\quad \text{for each}\enskip 0\leq s\leq t\leq T,
\]
and we will also write $\|f\|_{L^r(L^q)}$ instead of $\|f\|_{L^r(0,T;L^q)}.$
By $C(0,T;E),$ we denote the space of continuous functions from $[0,T]$ taking values in a Banach space $E.$ It is endowed with the usual supremum norm. 

Given another Banach space $F,$ we will denote by $\mathscr L(E,F)$ the space of linear, continuous maps from $E$ to $F,$ endowed with the operator norm. For $f$ in $E^*:=\mathscr L(E,\R),$ we will denote the dual pairing by
$\langle f,g \rangle$ (i.e.\ the evaluation of $f$ at $g\in E$).

\paragraph*{Rough paths.}
We now introduce some notation related to the theory of controlled rough paths \cite{lyons1998differential,gubinelli2004controlling}.
We will denote by $\Delta ,\Delta _2$ the simplices
\begin{equation}\label{nota:simplexes}
\begin{aligned}
&\Delta:=\{(s,t)\in [0,T]^2\,,\,s\leq t\}\,,
\\
&\Delta _2:= \{(s,\theta ,t)\in [0,T]^3\,,\,s\leq \theta \leq t\}\,.
\end{aligned}
\end{equation}
If $E$ is a vector space and $g\colon[0,T]\to E$ is a continuous path, we make use of the standard rough path notation
\[
g_{st}:= g_t - g_s,\quad  \text{for all}\quad (s,t)\in\Delta \,,
\]
hence blurring the difference between the value of $g$ and its increments.
For a general two-index map $h=(h_{st})_{(s,t)\in\Delta }$, we define an element $\delta h$ on the simplex $\Delta _2$ as follows:
\[
\delta h_{s\theta t}:=h_{st}-h_{s\theta }-h_{\theta t},\quad \text{for every}\enskip (s,\theta ,t)\in\Delta _2,
\]
and we recall that the kernel of $\delta $ is precisely the set of increments $g_{st}=g_t-g_s$ as before.

If $E$ is equipped with a norm $|\cdot |_{E},$ we shall denote by 
$C_ 2^{\alpha }(0,T;E)$ the set of $2$-index maps $g:\Delta \to E$ such that $g_{tt}=0$ for every $t\in [0,T]$ and 
\begin{equation}
\label{def:omega_a_2}
[g]_{\alpha ,E}:=\sup_{(s,t)\in\Delta }\frac{|g_{st}|_{E}}{(t-s)^\alpha }<\infty.
\end{equation}
We also denote by $C^\alpha (0,T;E)$ the space of paths $g\colon[0,T]\to E$, such that $(s,t)\mapsto g_{st}$ belongs to $C_2^\alpha (0,T;E)$ (this corresponds to the usual H\"older space).

\paragraph*{Controls.}
In the manuscript we will also encounter functions on the simplex $\Delta $ that behave with continuity properties similar to H\"older, but in a weaker sense. For that purpose  we need to introduce the notion of control function.
We call \textit{control} on $[0,T]$ any continuous and superadditive map $\omega \colon\Delta \to \R_+,$ that is, for all $(s,\theta ,t)\in \Delta _2$ it holds the inequality
\begin{equation}
\label{axiom:control}
\omega (s,\theta )+\omega (\theta ,t)\leq \omega (s,t)\,,
\end{equation}
implying in particular that $\omega (t,t)=0.$
Example of controls are given by $\int_s^tf_r\dd r$ with $f\geq 0$, $f\in L^1$, or by $\omega(s,t)^\alpha\varpi(s,t)^\beta$ for another such $\varpi$ and every $\alpha,\beta\geq0,$ with $\alpha+\beta\geq 1$ 
(see \cite{friz2010multidimensional} for a thorough presentation).

We will denote by 
\begin{equation}
\label{nota:Z}
\CC(0,T;E)
\end{equation} 
the space consisting of $g\in C_2^0(0,T;E)$ such that there exists a constant $\ell >0,$ a control $\omega ,$ and a real number $z>1$ so that $|g_{st}|_{E}\leq \omega (s,t)^z,$ for every $(s,t)\in\Delta $ such that $|t-s|\leq \ell .$

\subsection{Unbounded rough drivers}
\label{subsec:settings}

We assume throughout the paper that the coefficient path $t\mapsto X_t(x)$ is $\alpha $-H\"older, for Lebesgue almost every $x\in\T^d$,
with the hypothesis
\begin{equation}
\label{ass:alpha}
\alpha >1/3\,.
\end{equation} 
As is well-known in rough paths theory, properly formulating \eqref{problem} requires to assume further knowledge on $X$, such as an ``enhancement'' $\X=(X,\LL)$. 
The first component is nothing but the path itself, while here $\LL$ contains consistent additional data related to the iterated integrals.
Precisely, we make the following assumption.

\begin{assumption}[Geometric enhancement of the coefficients]
	\label{ass:geometric}
	We are given a path $X\in C^\alpha (0,T;(W^{3,\infty})^d)$ and an enhancement $\X_{st}=(X_{st},\LL_{st})$ of $X$ such that
	\[X_{st}=X_t-X_s\,,\quad (s,t)\in\Delta\,\,,\]
	while $\LL=(\LL^i)_{i=1,\dots,d}$ is subject to the Chen's-type relation
	\begin{equation}
	\label{chen_L}
	\delta \LL^i_{s\theta t}=\partial _\mu X_{s\theta }^iX_{\theta t}^\mu \,,\quad (s,\theta,t)\in\Delta_2\,.
	\end{equation} 
	
	We assume that $\X$ is \emph{geometric}, by which we mean that there exists a sequence $X(n)\in C^1(0,T;W^{3,\infty}(\T^d)),$ $n\geq 0$ such that if $\X_{st}(n)\equiv (X_{st}(n),\LL_{st}(n))$ is defined as the \emph{canonical lift}
	\[
	\left\{
	\begin{aligned}
	&X^{i}_{st}(n):= X_t^i (n)-X^i  _s(n)\,,
	\\
	&\LL^{i}_{st}(n):= \int_s^t \dd X^\mu_r(n)\partial _\mu X_{sr}^i(n)\,,
	\end{aligned}\right.\quad \text{for}\enskip i=1,\dots ,d,
	\]
	(the second term is a Bochner integral in the space $W^{2,\infty}$),
	then it holds
	\begin{align}
	\label{rho_alpha}
	\rho _\alpha (\X(n),\X)\enskip :=
	\|X(n)-X\|_{C^\alpha (0,T;W^{3,\infty})}
	+[\LL(n)-\LL]_{2\alpha;W^{2,\infty}}\to0.
	\end{align}
\end{assumption}

Attached to such enhancements $\X$ is the notion of unbounded rough driver.
The following is an affine linear generalization of the definitions given in \cite{bailleul2017unbounded,deya2016priori,hocquet2017energy,hofmanova2018navier,hocquet2018ito}, and will be needed in the sequel.
\begin{definition}
	\label{def:Q}
	Assume that we have a pair of two-index maps $\Q=(Q^1_{st},Q^2_{st})_{(s,t)\in\Delta }$ such that for each $i=1,2,$ and $k=-3+i,\dots 3,$ the map 
	\[
	Q^i_{st}:W^{k,p}\to W^{k-i,p}
	\]
	is well-defined for all $p\in[1,\infty]$, affine linear and bounded. Moreover, denoting by $(\mathrm{Lip}(E,F),$ $\nolinebreak{|\cdot |_{\mathrm{Lip}(E,F)}})$ the usual space of Lipshitz maps between normed vector spaces $E$ and $F$, we further assume that
	\begin{enumerate}
		\item [($\star$)]\label{RD1}
		for $i=1,2,$ and $k=-3+i,\dots ,3,$ 
		$Q^i$ belongs to $C^{i\alpha }_2\left(0,T;\mathrm{Lip}\left(W^{k,2},W^{k-i,2}\right)\right)$, i.e., there exists some constant $[\Q]_{\alpha }<\infty$
		such that
		\[
		|Q^i_{st}|_{\mathrm{Lip}(W^{k,p},W^{k-i,p})}\leq [\Q]_{\alpha }(t-s)^{i\alpha }\,,\quad \text{for every}\enskip (s,t)\in\Delta ;
		\]
		
		\item [($\star\star$)]\label{RD2}
		Chen's relations hold true, namely, for every $(s,\theta ,t)\in\Delta _2,$ we have
		\begin{equation}
		\label{chen}
		\delta Q^1_{s\theta t}=0\,,\quad \delta Q^2_{s\theta t}=(Q^1-Q^1(0))_{\theta t}\circ Q^1_{s\theta } \,,
		\end{equation}
		as operators acting on the scale $(W^{k,2})_{-3\leq k\leq 3}.$
	\end{enumerate}
\end{definition}

\begin{example}
	\label{example:URD}
	An example of such object is defined as the pair $\B=(B^1,B^2)$ via
	\[\begin{aligned}
	B^1_{st}:= X^i _{st}\partial _\mu 
	\\
	B^2_{st}:= X^i _{st}X^j _{st}\partial _{ij} + \LL^i _{st}\partial _i\,,
	\end{aligned}
	\]
	where the coefficients satisfy Assumption \ref{ass:geometric}.
	In this case $\B$ is in fact linear and this corresponds to the usual notion of an unbounded rough driver \cite{bailleul2017unbounded}.
\end{example}

The geometricity of the coefficient path allows the associated unbounded rough driver $\B$ to satisfy a bunch of convenient algebraic rules,
for instance regarding the non-commutative bracket $2B^2_{st}-(B^1_{st})^2.$
We summarize these in the next result, without proof (see nevertheless \cite[Appendix A]{hocquet2018ito}).
\begin{lemma}[Weak geometric relations]
	\label{lem:weak_geo}
	Let $\X=(X,\LL)$ be as in Assumption \ref{ass:geometric}, and define the (linear) unbounded rough driver $\B$ as in Example \ref{example:URD}.
	Introduce the generalized bracket\footnote{
		In \cite{hocquet2018ito}, the bracket was defined somewhat erroneously as half the above quantity, i.e.\ $B^2_{s,t}-\frac12B^1_{s,t}\circ B^1_{s,t}$.
		To fit with the usual convention (\cite{friz2014course}), we here adopt the notation \eqref{bracket}.
	}
	\begin{equation}
	\label{bracket}
	[\B]_{st}:=2B^2_{st}-B^1_{st}\circ B^1_{st}\,,\quad\text{for}\enskip  (s,t)\in\Delta .
	\end{equation} 
	The following holds.
	\begin{enumerate}[label=(\roman*)]
		\item For each $(s,t)\in\Delta ,$ $[\B]_{st}$ is a derivation.
		In fact $[\B]$ and $\LL$ are related through the formula
		\begin{equation}\label{formula}
		[\B]_{st}=\left(2\LL_{st}^i -X^\mu _{st}\partial _\mu X_{st}^i\right)\partial _i\,,\quad (s,t)\in\Delta .
		\end{equation}
		\item If $\div X=0,$ then $\div [\B]= 0.$
	\end{enumerate}
\end{lemma}

\begin{example}
	\label{example:bracket}
	Let $(Z^\mu,\mathbb Z^{\mu\nu})_{1\leq \mu,\nu\leq m}$ be an $m$-dimensional, two-step geometric rough path of H\"older regularity $\alpha>\frac13$ and assume that $\sigma=\sigma^{i}_\mu(x)$ belongs to $C^3_b(\mathbb T^d;\R^d\otimes \R^m)$.
	Define
	\[ 
	X^i_{st}:=\sigma^i_\mu Z^\mu_{st},\quad
	\LL_{st}^i:= \mathbb Z_{st}^{\mu,\nu} \partial_j\sigma^i_\mu \sigma^j_\nu\enskip.
	\]
	Then, it is easily checked that $\X=(X,\LL)$ satisfies the requirements of Assumption \ref{ass:geometric}.
	
	Moreover, denoting by $\mathbb A$ the \textit{L\'evy area} of $\Z$, formally
	\begin{equation}\label{levy}
	\mathbb A_{st}:=\frac12\iint_{s<r_1<r_2<t}(\dd Z_{r_1}\otimes \dd Z_{r_2} -\dd Z_{r_2}\otimes\dd Z_{r_1})\,, 
	\end{equation}
	one sees using geometricity that
	\[ 
	\LL_{st}^i:= (\frac12Z^\mu_{st} Z_{st}^\nu + \mathbb A^{\mu,\nu}_{st})\partial_j\sigma^i_\mu \sigma^j_\nu
	\]
	If we denote by $V^i\mapsto \widehat V:=V^i\partial_i$ the natural isomorphism between coefficients and vector fields, exploiting symmetries one sees that
	\[ 
	\mathbb{\widehat L}_{st}= \frac12(Z^\nu_{st}\widehat \sigma_\nu)\circ (Z^\mu _{st}\widehat \sigma _\mu) - \frac12X_{st}^iX^j_{st}\partial_{ij} + \frac12 \mathbb A_{st}^{\mu,\nu}[\widehat \sigma_\nu,\widehat \sigma_\mu]\,.
	\]
	where $[\cdot ,\cdot]$ is the usual Lie bracket for vector fields. Using the notations of Example \eqref{example:URD}, this means that the following identity holds
	\begin{equation}\label{id_bracket}
	[\B]_{st}\equiv 2B^2_{st}- (B^1_{st})^2 = \mathbb A^{\mu,\nu}_{st}[\widehat\sigma_\nu,\widehat \sigma_\mu].
	\end{equation} 
	
	If the vector fields $\widehat\sigma_\mu,\mu=1,\dots m,$ commute, the above relation tells us that $[\B]_{st}=0$. In that case, the formula \eqref{formula} permits to construct $\LL^i$ without further knowledge than the values of the path $X=\sigma_\mu Z^\mu$ itself (in particular, the solutions obtained in Theorem \ref{thm:existence} will not depend on $\mathbb A$). 
\end{example}

\subsection{Notion of solution and main results}
\label{subsec:results}
In the whole paper, we consider an ansatz equation of the form
\begin{equation}
\left\{\begin{aligned}
\label{rough_PDE_gene}
&\dd v_t=(-\partial _if^i_t +f^0_t)\dd t+\dd \Q_t^i(g_t),\quad 
\text{on}\enskip [0,T]\times \T^d,
\\
&v_0=v^0\in L^p(\T^d)\,,\quad p\in [1,\infty]\,,
\end{aligned}\right.
\end{equation}
where $g$ possibly depends on $v$ and $\Q$ is an (affine linear) unbounded rough driver.
Concerning the drift term, we will assume that $f^i,i=0,\dots d$ belongs to $L^p(0,T;L^p)$ (i.e.\ it is $p$-integrable as a mapping from $[0,T]$ into $L^{p}$), while the derivation $\partial _i=\frac{\partial }{\partial x_i}$ is understood in distributional sense.

\begin{definition}[$L^p$-solution]
	\label{def:solution}
	Let $T>0$, $\alpha \in(1/3,1/2]$ and fix $p,p'\in [1,\infty]$ with $1/p+1/p'=1, $
	and consider $g\in L^\infty(0,T; L^p),$ while $f^i\in L^1(0,T;L^p),$ $i=0,\dots d.$
	
	Assume the existence of a path $g'\colon [0,T]\to L^p$ such that letting
	$R^g_{st}:=g_{st}- Q^1_{st}(g'_s)$, 
	there are controls $\omega,\varpi $ and $L>0$ such that for each $(s,t)\in\Delta$ with $|t-s|\leq L,$
	\begin{equation}
	\label{controlled}
	|R^g_{st}|_{W^{-2,p}}\leq \omega (s,t)^{2\alpha }
	\quad \text{and}\quad |g'_{st}|_{W^{-1,p}}\leq \varpi(s,t)^\alpha\,.
	\end{equation}
	\begin{itemize}
		\item
		A path $v\colon[0,T]\to L^p$ is said to be an {\emph{$L^p$-weak solution}} to the
		rough PDE \eqref{rough_PDE_gene} if it fulfills the following conditions
		\begin{enumerate}[label=(\arabic*)]
			\item $v\colon[0,T]\to L^p$ is weakly-$*$ continuous and belongs to $L^\infty(0,T;L^p)$;
			\item for every $\phi \in W^{2,p'}$, and every $(s,t)\in\Delta :$
			\begin{equation}
			\label{nota:solution}
			\int_{\T^d} v_{st}\phi \dd x+\iint_{[s,t]\times\T^d} (f^i\partial _i\phi + f^0\phi ) \dd x\dd r
			=\Big\langle Q^1_{st}(g_s) + Q^2_{st}(g'_s)+v_{st}^\natural,\phi \Big\rangle\,,
			\end{equation}
			for some $v^\natural\in \mathcal C^{1+}(0,T;W^{-3,p}).$
		\end{enumerate}
		
		\item
		An $L^p$-solution $v$ is called an {\emph{$L^p$-energy solution}} of \eqref{rough_PDE_gene} if additionally
		\begin{equation}
		\label{Lp_sol} 
		v\in L^p(0,T; W^{1,p}).
		\end{equation} 
	\end{itemize}
\end{definition}

\begin{remark}
The first of the two conditions in \eqref{controlled} means that a cancellation occurs in $R^g$ and is conveniently alluded to by the expression `$g$ is controlled by $Q$ with Gubinelli derivative $g'$'.
The Banach space of such controlled rough paths will be introduced below in Definition \ref{def:controlled}, where it will be used to formulate the remainder estimates.
Note that \eqref{controlled} implies that $H_{st}\equiv Q^1_{st}(g_s) + Q^2_{st}(g'_s)$ is amenable to the hypotheses of the the Sewing Lemma (Theorem \ref{thm:sewing}), in the space $E=W^{-3,p},$ since 
\begin{multline*}
|\delta H_{s\theta t}|_{W^{-3,p}}= |- (Q^1-Q^1(0))_{\theta t}(R^{g}_{s\theta}) - (Q^{2}-Q^{2}(0))_{\theta t}(\delta g'_{s\theta})|_{W^{-3,p}} 
\\
\lesssim [\Q]_{\alpha}\left ((t-\theta)^\alpha\omega(s,\theta)^{2\alpha} + (t-\theta)^{2\alpha}\varpi(s,\theta)^{\alpha}\right ) \quad \in \CC(E)\,.
\end{multline*}
(see \eqref{content} below for more detailed computations).
Hence the existence and uniqueness of $v^{\natural}$ in \eqref{nota:solution}.
\end{remark}

We can now state our main results for the quasilinear Cauchy problem:
\begin{equation}
\label{problem_main}
\left\{
\begin{aligned}
&\dd u - \partial _i(a^{ij}(t,x,u)\partial _ju) = \dd \X^i\partial _i u\quad \text{on}\enskip [0,T]\times\T^d\,,
\\
&u_0\in L^2(\T^d)\,.
\end{aligned}\right.
\end{equation}

Since they involve different assumptions, we state existence and uniqueness in separate theorems.
The following existence result will be shown in Section \ref{sec:existence}.

\begin{theorem}[Existence of $L^2$-solutions]
	\label{thm:existence}
	Take $u_0\in L^2$ and let $a^{ij}$ be as in Assumption \ref{ass:A}. Moreover, let $\X=(X^{i},\LL^{i})_{i=1,\dots ,d}$ be as in Assumption \ref{ass:geometric}.
	There exists an $L^2$-energy solution to the problem \eqref{problem_main}.
\end{theorem}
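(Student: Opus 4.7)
The plan is to proceed by smooth approximation of the rough input, derive uniform a priori estimates in the energy space via the Rough Gronwall Lemma of Section \ref{sec:preliminaries}, and then pass to the limit by compactness. Using Assumption \ref{ass:geometric}, fix smooth paths $X(n)\in C^1(0,T;W^{3,\infty})$ whose canonical lifts $\X(n)=(X(n),\LL(n))$ satisfy $\rho_\alpha(\X(n),\X)\to 0$. For each $n$, the classical quasilinear parabolic problem
\[
\partial_t u(n)-\partial_i\bigl(a^{ij}(t,x,u(n))\partial_j u(n)\bigr)=\dot X^i(n)\,\partial_i u(n),\qquad u(n)|_{t=0}=u^0,
\]
admits a weak energy solution $u(n)\in L^\infty(0,T;L^2)\cap L^2(0,T;W^{1,2})$ by standard monotonicity or Galerkin methods (e.g.\ Ladyzhenskaya--Uraltseva). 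One then checks that each $u(n)$ is an $L^2$-energy solution of \eqref{rough_PDE_gene} in the sense of Definition \ref{def:solution}, with driver $\Q=\B(n)$ (Example \ref{example:URD}), flux $f^i=a^{ij}(\cdot,u(n))\partial_j u(n)$, source $f^0=0$, and Gubinelli derivative $u(n)'=u(n)$.

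The heart of the argument is a bound that does not depend on $n$. Applying the product formula of Appendix \ref{app:product} to $u(n)\otimes u(n)$ yields a renormalized rough identity for $|u(n)|^2$ in which the second-order part of $\B(n)$ appears only through the bracket $[\B(n)]$. By Lemma \ref{lem:weak_geo}, $[\B(n)]$ is a first-order derivation whose coefficient $\LL(n)-\tfrac12X(n)^\mu\partial_\mu X(n)$ is uniformly bounded in $W^{2,\infty}$ thanks to the convergence of $\rho_\alpha$. Pairing the renormalized equation with the constant test function, invoking the coercivity \eqref{coercivity} of $a^{ij}$, and controlling the sewing remainder by a super-additive control $\omega^{1+}$, one obtains an estimate of the form
\[
|u_t(n)|_{L^2}^2+2\lambda\int_0^t|\nabla u_r(n)|_{L^2}^2\,\dd r\;\leq\;|u^0|_{L^2}^2+\int_{[0,t]}C\,|u(n)|_{L^2}^2\,\dd\omega+\omega(0,t)^{1+}.
\]
The Rough Gronwall Lemma then produces a uniform bound $\sup_n\bigl(\|u(n)\|_{L^\infty(L^2)}+\|u(n)\|_{L^2(W^{1,2})}\bigr)<\infty$, together with a uniform control of the remainder $u(n)^\natural$ in $\mathcal{C}^{1+}(0,T;W^{-3,2})$.

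Extract a subsequence (not relabeled) such that $u(n)\rightharpoonup u$ weakly-$*$ in $L^\infty(L^2)$ and weakly in $L^2(W^{1,2})$. Strong compactness in $L^2(0,T;L^2)$ follows from an Aubin--Lions argument: the uniform bound on $u(n)^\natural$ combined with the $\alpha$-H\"older control of $\B(n)u(n)$ yields equicontinuity of $t\mapsto u_t(n)$ in $W^{-3,2}$, which together with boundedness in $W^{1,2}$ provides the required compact embedding. By continuity of $a^{ij}$ in its third argument this gives $a^{ij}(\cdot,u(n))\to a^{ij}(\cdot,u)$ a.e., and boundedness together with weak convergence of $\partial_j u(n)$ entails $a^{ij}(\cdot,u(n))\partial_j u(n)\rightharpoonup a^{ij}(\cdot,u)\partial_j u$ in $L^2(L^2)$. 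Combined with $\rho_\alpha(\B(n),\B)\to 0$ and the stability of the sewing remainder, passing to the limit in \eqref{nota:solution} identifies $u$ as an $L^2$-energy solution of \eqref{problem_main}.

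The main obstacle lies in the intrinsic closure of the a priori estimate: constants must depend only on $\rho_\alpha(\X(n),\X)$ and on the energy norm of $u(n)$ itself, never on the singular quantities $|\dot X(n)|$. Equivalently, the control of $u(n)^\natural$ in $W^{-3,2}$ must be sharp enough to provide the weak time-equicontinuity needed by the Aubin--Lions argument; it is precisely this interplay between the Sewing Lemma and the Rough Gronwall Lemma, together with the geometric identity for the bracket $[\B]$, that drives the proof.
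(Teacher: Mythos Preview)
Your proposal is correct and follows essentially the same route as the paper: smooth approximation of $\X$, a uniform energy estimate obtained by squaring the solution and applying the Rough Gronwall Lemma, and a compactness argument (the paper uses Ascoli in $W^{-1,2}$ plus interpolation rather than Aubin--Lions, but the effect is the same) to pass to the limit in the weak formulation. One small remark: invoking Lemma \ref{lem:weak_geo} and the bracket $[\B(n)]$ is unnecessary here---the paper simply bounds the rough terms $\langle u_s^2(n),(B^{1,*}_{st}+B^{2,*}_{st})1\rangle$ and the remainder $\langle u^{2,\natural}_{st}(n),1\rangle$ directly via Proposition \ref{pro:apriori} and the driver norm $\rho_\alpha(\X(n))$; the bracket identity is exploited only in the uniqueness proof of Section \ref{sec:uniqueness}, where $\div X=0$ forces $B^{i,*}1=0$.
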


A noticeable aspect of the previous theorem is that solutions exist in the same class as the one described in the linear theory \cite{hocquet2017energy}, in particular they are global in time. The regularity of the coefficients is precisely enough to make sense of the equation on $v:=u^2,$ as an $L^1$ solution of a similar problem, which in turn allows to obtain the key a priori estimate in the energy space $L^\infty(L^2)\cap L^2(W^{1,2})$. This estimate will be a consequence of a similar Gronwall-type argument as for the linear case.
The existence of $L^2$-solutions will then be shown by a compactness argument, using the fact that the driving coefficient path $\X$ is geometric.  We note that this strategy follows essentially the lines of \cite{hocquet2017energy,hocquet2018ito}.\smallskip

As for uniqueness of solutions, the situation becomes much more involved. Typically, one aims to look for an estimate on the difference of two solutions, which in general involves moments on the spatial derivatives. As is quicky realized, these bounds are stronger than those needed for the existence step. Namely, the $L^\infty(L^2)\cap L^2(W^{1,2})$-estimate on $u$ is no longer sufficient to conclude.
A basic strategy is then to take a lower exponent than for existence, by searching to prove an $L^\infty(L^1)$ estimate instead of $L^\infty(L^2)$. But even in that case, the ``naive'' approach to estimate the rough integral seems to inevitably make the essential supremum of the gradient of solutions appear, which is too strong for our purposes.
There is however a favorable case that can be treated at the level of integrability described in Theorem \ref{thm:existence}, which is when the driving coefficient path $X^i_t(x),i=1,\dots ,d,$ is divergence-free. In that case, the $L^\infty(L^1)$ estimate simplifies, which allows us to show the following.

\begin{theorem}[Uniqueness for divergence free vector fields]
	\label{thm:uniqueness}
	Assume that the hypotheses of Theorem \ref{thm:existence} are satisfied, and let the divergence of $X$ be zero for all times, i.e.\ assume that $\div X_t(\cdot )=0,$ $\forall t\in [0,T].$
	Then, the solution constructed above is unique in the class of $L^2$-energy solutions.
\end{theorem}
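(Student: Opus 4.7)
The plan is to establish uniqueness via an $L^\infty(0,T;L^1)$ estimate on the difference $w:=u_1-u_2$ of two $L^2$-energy solutions sharing the same initial datum, following the strategy announced in the introduction. A renormalisation with a smooth convex approximation of the absolute value reduces the problem to controlling the rough transport contribution, and the divergence-free hypothesis on $X$ is exactly the structural feature that makes the latter vanish upon spatial integration. To begin with, I would write $a^{ij}(u_1)\partial_j u_1-a^{ij}(u_2)\partial_j u_2 = a^{ij}(u_1)\partial_j w + (a^{ij}(u_1)-a^{ij}(u_2))\partial_j u_2$ and observe that $w$ solves the \emph{linear} rough PDE
\[
\dd w - \partial_i(A^{ij}\partial_j w)\,\dd t = \partial_i b^i\,\dd t + \dd\X^i\partial_i w,\qquad w_0=0,
\]
where $A^{ij}(t,x):=a^{ij}(t,x,u_1(t,x))$ is uniformly elliptic by Assumption \ref{ass:A}, and $b^i:=(a^{ij}(u_1)-a^{ij}(u_2))\partial_j u_2$ obeys the pointwise bound $|b^i|\leq C|w||\nabla u_2|$ thanks to the $C^1$-dependence of $a$ on $u$. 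This fits Definition \ref{def:solution} with $\Q=\B$ (Example \ref{example:URD}) and Gubinelli derivative $w'=w$.

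Next, I would introduce the regularisation $\phi_\epsilon(s):=\sqrt{s^2+\epsilon^2}-\epsilon$, satisfying the uniform bounds $0\leq|s|-\phi_\epsilon(s)\leq\epsilon$, $0\leq\phi_\epsilon'\leq 1$, and -- most importantly -- $\phi_\epsilon''(s)s^2\leq\epsilon$ for all $s\in\R$. By the renormalisation of Section \ref{sec:preliminaries} (or the product formula of Appendix \ref{app:product}), $\phi_\epsilon(w)$ is itself a controlled path with respect to the \emph{same} driver $\B$, with Gubinelli derivative $\phi_\epsilon(w)$, and a straightforward chain-rule manipulation yields
\[
\dd\phi_\epsilon(w)-\partial_i(A^{ij}\partial_j\phi_\epsilon(w))\,\dd t = \bigl[\partial_i(\phi_\epsilon'(w)b^i) - \phi_\epsilon''(w)(A^{ij}\partial_j w + b^i)\partial_i w\bigr]\,\dd t + \dd\X^i\partial_i\phi_\epsilon(w).
\]

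I would then test this equation against the constant function $\mathbf{1}$. The total-divergence terms integrate to zero, while the decisive observation is that under $\div X=0$, Lemma \ref{lem:weak_geo} yields both $\langle B^1_{st}(f),\mathbf{1}\rangle=0$ and, via the derivation $[\B]$ being divergence-free, $\langle B^2_{st}(f),\mathbf{1}\rangle=0$ for every admissible $f$. A Sewing-Lemma uniqueness argument on the scale $W^{-3,1}\leftarrow\cdots\leftarrow L^1$ propagates this cancellation to the $\natural$-remainder of $\phi_\epsilon(w)$, so that the full rough contribution vanishes after pairing with $\mathbf{1}$. Combined with the coercivity bound $\phi_\epsilon''(w)A^{ij}\partial_iw\partial_jw\geq\lambda\phi_\epsilon''(w)|\nabla w|^2$, a Cauchy--Schwarz absorption produces
\[
\int_{\T^d}\phi_\epsilon(w(t))\,\dd x + \frac{\lambda}{2}\int_0^t\!\!\int_{\T^d}\phi_\epsilon''(w)|\nabla w|^2\,\dd x\,\dd s \leq \frac{C}{\lambda}\int_0^t\!\!\int_{\T^d}\phi_\epsilon''(w)|w|^2|\nabla u_2|^2\,\dd x\,\dd s.
\]
Using $\phi_\epsilon''(w)|w|^2\leq\epsilon$ and the finiteness of $\|\nabla u_2\|_{L^2(L^2)}$, the right-hand side is bounded by $C\epsilon$, and since $w_0=0$ gives $\phi_\epsilon(w_0)\equiv 0$, letting $\epsilon\to 0$ via dominated convergence (justified by $\phi_\epsilon(w)\leq |w|\in L^\infty(L^1)$) delivers $\|w(t)\|_{L^1(\T^d)}=0$ for every $t$, whence $w\equiv 0$.

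The main obstacle lies in the rigorous execution of the renormalisation together with the spatial integration that kills the rough transport. On the one hand, $\phi_\epsilon(w)$ must be identified as a genuinely controlled path with the correct Gubinelli derivative and $2\alpha$-Hölder remainder, which entails a Taylor-type expansion for the composition of a smooth function with a rough solution in Sobolev spaces. On the other hand, verifying that every constituent of the resulting rough decomposition -- including the higher-order $\natural$-term -- preserves the divergence-free cancellation when tested against constants relies crucially on the algebraic content of Lemma \ref{lem:weak_geo}: the derivation property of $[\B]$ and its vanishing divergence under $\div X=0$ are what allow the geometricity of $\X$ to genuinely simplify the rough analysis.
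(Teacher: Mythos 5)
Your proof is correct and structurally very close to the paper's: renormalise the difference $w=u^1-u^2$ with a smooth convex approximation of $|\cdot|$, test against $\mathbf 1$, and use $\div X=0$ together with Lemma \ref{lem:weak_geo} (so that $B^{1,*}_{st}\mathbf 1=0$ and $B^{2,*}_{st}\mathbf 1=0$, hence the $\delta$-argument kills the $\natural$-remainder) to eliminate the rough contribution entirely. The one genuine difference is the choice of regulariser and the way the cross term is closed. You take $\phi_\epsilon(s)=\sqrt{s^2+\epsilon^2}-\epsilon$, whose governing property is $\phi_\epsilon''(s)s^2\leq\epsilon$; since $\phi_\epsilon''(s)|s|$ is \emph{not} small, you must retain the coercive term $\lambda\phi_\epsilon''(w)|\nabla w|^2$ and perform a Cauchy--Schwarz absorption on the flux $\phi_\epsilon''(w)\,b\cdot\nabla w$. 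The paper, following Hofmanov\'a--Zhang, instead constructs a discrete sequence $\beta_n$ with the linear estimate $\beta_n''(x)\leq 2/(n|x|)$, i.e.\ $\beta_n''(v)|v|\leq 2/n$ directly; the coercive term can then simply be discarded and the cross term bounded via $\tfrac{C}{n}\iint|\nabla v||\nabla u^2|\lesssim\tfrac{C}{n}\iint(|\nabla u^1|^2+|\nabla u^2|^2)$. Both routes produce a right-hand side vanishing with the regularisation parameter; yours trades the rewriting $|\nabla v|\leq|\nabla u^1|+|\nabla u^2|$ for essential use of the ellipticity constant $\lambda$ in the absorption step. Everything else -- the controlled-path identification $\phi_\epsilon(w)'=\phi_\epsilon(w)$, the vanishing of the Sewing remainder via Chen's relations and $\div[\B]=0$, and the passage $\epsilon\to 0$ -- matches the paper's argument.
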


The key argument in the proof of the above uniqueness result is the so-called renormalization property, Theorem \ref{thm:renorm}, which states that Nemytskii operations of the form $u\mapsto \beta \circ u,$ for $\beta \in C^2 ,$ give rise to new solutions of a similar problem. As seen in Section \ref{sec:uniqueness}, combining this fact with a suitable approximation argument yields the possibility of estimating the $L^\infty(L^1)$-norm of the difference of two given solutions $u^1$ and $u^2$.
In the divergence-free scenario, this estimate is enough because, it is possible to integrate this inequality and to conclude thanks to the rough Gronwall estimate, Lemma \ref{lem:gronwall}. \smallskip

In the case when $\div X\neq 0$ however, the previous simple idea fails, unless the two solutions have bounded spatial derivatives. A uniqueness criterion based on the latter condition would not be very satisfactory since it is not known whether such solutions exist, at least in a rough scenario.
Following an idea of \cite{hocquet2018generalized}, a possibility is then to take the product of $v_t(x):=|u_t^1(x)-u^2_t(x)|$ (which formally solves some rough parabolic inequality) with a ``weight function'' $m_t(x)$,
solution of the backward dual equation
\begin{equation}
\label{m_intro}
\left\{
\begin{aligned}
&\dd m  + (Am -b^i\partial _i m)\dd t =(\dd \X^i\partial_i+\div\dd\X)m\,,\quad \text{on}\enskip [0,T]\times \T^d\,,
\\
&m_T(x)=1\,,
\end{aligned}\right.
\end{equation} 
for well-chosen elliptic operator $A_t$ and velocity $b^i_t(x).$
The main strategy is then to obtain the desired $L^\infty(L^1)$ estimate by showing that
the (unique) solution $m$ of \eqref{m_intro} is bounded below by a positive constant.

\begin{remark}
	In the setting of stochastic PDEs a similar duality method has been used before, leading to backward SPDEs of the form \eqref{m_intro}. See, e.g., \cite{coghi2019stochastic,debussche2020diffusion}.
\end{remark}

In Section \ref{sec:parabolic}, we will extend our previous results on rough parabolic equations \cite{hocquet2018ito} by showing (among other things) that for velocities that are subject to the Ladyzenskaja-Prodi-Serrin -type condition (LPS, in short) 
\[
b\in L^{2r}(0,T;L^{2q}),\quad
\text{where}\quad \frac{1}{r}+ \frac{d}{2q}<1\,,
\]
then for a slightly different class of rough evolution equations as \eqref{m_intro} (but ``close enough''), it is possible to show that
the corresponding solutions remain positive for short times. That $m$ shares the same property will follow under the condition that moment estimates (with arbitrary exponents) hold for $\Phi$ and $\nabla\Phi$, where $\Phi$ solves of the rough transport equation
\begin{equation}
\label{transport_intro}
\left\{\begin{aligned}
&\dd \Phi = -\dd\X^i\partial _i\Phi  - \div \dd\X\quad \text{on}\enskip [0,T]\times \T^d\,,
\\
&\Phi _0=0\,.
\end{aligned}\right.
\end{equation}
Solving \eqref{transport_intro} in $L^\infty$ requires new arguments in comparison with the ones used in \cite{bailleul2017unbounded} (see also \cite{catellier2016rough}), because of the absence of a maximum principle due to the additive rough input. It will be addressed in Section \ref{sec:transport} thanks to an affine-type generalization of the product formula of \cite{hocquet2018ito}. While moment bounds for $\Phi$ will be shown for any spatial dimension, it is not clear yet how to extend the argument to the case of a \textit{system} of rough transport equations. Consequently, we are able to obtain the desired moment estimates on $\nabla\Phi$ only when $d=1$. As a consequence, in the next results we focus on the one-dimensional torus $\mathbb T^1$ (the $d$-dimensional case with $d>1$ will be presented in a future contribution). For matters of readability, in the sequel we denote by $\partial_x $ the one-dimensional nabla operator, namely $\partial_x:=\frac{\partial}{\partial x}$.

\begin{theorem}[Uniqueness 2]
	\label{thm:regular}
	Set $d=1$ and let $\X$ satisfy the hypotheses of Theorem \ref{thm:existence}. Assume further that $\rho(\partial_x\X)<\infty$, i.e.\
	$\partial_x X\in C^\alpha (0,T;W^{3,\infty})$ and $\partial_x \mathbb L\in C_{2}^{2\alpha}(0,T;W^{2\infty}).$
	Let $u^1$ and $u^2$ be $L^2(\mathbb T^1)$-energy solutions of \eqref{problem_main} such that
	\begin{equation}\label{LPS_gradient}
	\partial_x u^i \in L^{2r}(0,T;L^{2q}),\quad i=1,2\,,
	\end{equation}
	where $r\in (1,\infty],$ $q\in[1,\infty],$ are given numbers subject to the LPS condition
	\begin{equation}
	\label{integ:intro}
	\frac{1}{r}+ \frac{1}{2q}<1\,.
	\end{equation} 
	
	If $u^1_0=u^2_0,$ then $u^1=u^2\,.$
\end{theorem}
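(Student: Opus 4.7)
My plan follows the weighted-duality strategy announced in the introduction. First, set $w:=u^1-u^2$ and use the $C^1$ regularity of $a^{ij}$ in its third argument to write the difference of the two equations as
\[
\dd w-\partial_i\bigl(a^{ij}(u^1)\partial_j w\bigr)\dd t=\partial_i\bigl(c^{ij}\,w\,\partial_j u^2\bigr)\dd t+\dd\X^i\partial_i w,
\]
where $c^{ij}(t,x):=\int_0^1(\partial_z a^{ij})(t,x,u^2+\tau w)\dd\tau\in L^\infty$. Applying the renormalization property (Theorem \ref{thm:renorm}) with a smooth convex approximation $\beta_\epsilon$ of $r\mapsto|r|$ and letting $\epsilon\downarrow 0$, the non-negative path $v:=|w|$ satisfies, in the distributional sense, a rough parabolic \emph{inequality} whose deterministic right-hand side features a drift involving $\partial_x u^2$, which by \eqref{LPS_gradient} lies in $L^{2r}(L^{2q})$, together with a non-positive quadratic term (of the form $-\beta''_\epsilon(w)a^{ij}\partial_i w\partial_j w$ in the limit) that can be discarded. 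The rough transport $\dd\X^i\partial_i$ is preserved.

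Since $X$ is not assumed divergence-free, the straightforward Gronwall approach of Theorem \ref{thm:uniqueness} no longer closes on $\|v_t\|_{L^1}$. Following \cite{hocquet2018generalized}, I introduce the backward dual problem \eqref{m_intro}, where the parabolic operator $A$ is the formal adjoint of $\phi\mapsto\partial_i(a^{ij}(u^1)\partial_j\phi)$ and $b$ encodes the LPS drift appearing in the inequality for $v$. By the solvability theory of Section \ref{sec:parabolic}, condition \eqref{integ:intro} on $b$ yields a unique solution $m\in L^\infty$. Applying the product formula (Appendix \ref{app:product}) to $v$ and $m$, the rough terms cancel pairwise thanks to the formal duality $(\dd\X^i\partial_i)^*=-\dd\X^i\partial_i-\div\dd\X$, and the principal parabolic parts cancel by $A\leftrightarrow A^*$. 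One is left with
\[
\int_{\mathbb T^1}v_t\,m_t\,\dd x\le C\int_0^t\int_{\mathbb T^1}v_s\,m_s\,\dd x\,\dd s+\text{(controlled rough remainder)},
\]
with no initial contribution since $u_0^1=u_0^2$. A positive lower bound $m\ge c_0>0$ would then translate this into $\|v_t\|_{L^1}\le C c_0^{-1}\int_0^t\|v_s\|_{L^1}\dd s+\cdots$, and the Rough Gronwall Lemma \ref{lem:gronwall} would force $v\equiv 0$.

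The main obstacle is therefore this strictly positive lower bound on $m$. For it, I perform the change of unknown $\widetilde m:=e^{-\Phi}m$, where $\Phi$ is the scalar $L^\infty$-solution of the additive rough transport equation \eqref{transport_intro} constructed in Section \ref{sec:transport}. Using the product formula once more, one checks that $\widetilde m$ satisfies a \emph{classical} (non-rough) backward parabolic problem
\[
\partial_t\widetilde m+A\widetilde m-\widetilde b^i\partial_i\widetilde m+\widetilde c\,\widetilde m=0,\qquad \widetilde m_T=e^{-\Phi_T},
\]
in which the effective drift $\widetilde b$ and zero-th order coefficient $\widetilde c$ depend boundedly on $b$, on $a^{ij}(u^1)$ and on $\partial_x\Phi$. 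This is precisely the step where the hypothesis $d=1$ enters: only then do the moment estimates on $\partial_x\Phi$ provided by Section \ref{sec:transport} give $\widetilde b$ enough integrability to satisfy an LPS condition. The rough Moser iteration of Section \ref{sec:parabolic}, applied to $1/\widetilde m$, then produces the sought uniform lower bound $\widetilde m\ge c_0>0$, and hence $m\ge c_0 e^{-\|\Phi\|_\infty}>0$, closing the argument.
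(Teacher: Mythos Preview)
Your overall architecture---renormalize $|u^1-u^2|$, pair against a backward dual weight $m$, reduce to a lower bound on $m$ via the exponential transform involving $\Phi$---matches the paper. But two steps are wrong as stated.

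First, and most seriously, the equation for $\widetilde m=e^{-\Phi}m$ is \emph{not} classical. What the exponential change of variable achieves is to kill the zero-order multiplicative rough term $\dd(\partial_x\X)\cdot$ in the equation for $\check m:=m_{T-\cdot}$; the transport noise $\dd\X\,\partial_x$ survives. Concretely, the new unknown $z:=e^{-\Phi}\check m-1$ solves a genuine rough parabolic problem
\[
\dd z+(\mathscr L z-\partial_x f^1+f^0)\,\dd t=-\dd\X\,\partial_x z,\qquad z_0=0,
\]
with $\mathscr L$ of the form \eqref{general_M} and coefficients built from $\check a,\check b,\partial_x\Phi$. This is exactly the ansatz \eqref{parabolic_special} to which the rough Moser iteration of Theorem \ref{thm:boundedness} applies (crucially that theorem requires $Y^0=Y^{-1}=0$). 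Your claim of a non-rough equation would remove the need for the whole machinery of Section \ref{sec:parabolic}, but it is false: the rough transport is preserved under the transform.

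Second, applying Moser iteration to $1/\widetilde m$ is circular, since positivity of $\widetilde m$ is what you are trying to prove. The paper instead exploits that $z$ has zero initial datum: Theorem \ref{thm:boundedness} gives $\|z\|_{L^\infty([0,T_+]\times\mathbb T^1)}\le C\big(\|f^0,f^1\|_{L^1(L^1)}^{1/2}+\|z\|_{L^{\frac{2r}{r-1}}(L^{\frac{2q}{q-1}})}\big)$, and both terms on the right can be made strictly less than $1$ by shrinking $T_+$. Then $\check m=e^{\Phi}(1+z)\ge e^{-\|\Phi\|_\infty}(1-\|z\|_\infty)>0$ on $[0,T_+]$. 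A minor point: after the product formula the cancellation is exact, so you obtain $\int|v_t|m_t\,\dd x\le 0$ directly, with no residual $C\int_0^t\int v\,m$ and hence no need for Gronwall at that stage.
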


A logical question to ask is whether solutions exist with such level of integrability. In dimension one, it is sufficient to obtain an $L^\infty(L^2)$ a priori estimate on the spatial derivative $v=\partial _xu$, which is easily seen to satisfy a similar equation as $u$ itself, provided $a$ is regular enough.
The only significative difference is the appearance of the following Burgers-type nonlinearity in the drift associated to the equation on $v$
\[
F(v)=-\partial _x\big(a_z(t,x,u)v^2 \big)\,.
\]
As a consequence, we obtain a Bihari-Lassalle type estimate that only guarantees a uniform bound over a short time $T_*\in(0,T]$, thereby proving \textit{local} existence in time, in the class described by \eqref{integ:intro}-\eqref{LPS_gradient}. (On the other hand, since the $L^2$ theory of Theorem \ref{thm:existence} gives a global solution, it is still possible to extend the unique solution $u$ on $[0,T_*)$ after $T_*,$ however the above result does not guarantee uniqueness after such extension).

\begin{theorem}[Existence and uniqueness in dimension one]
	\label{thm:existence_2}
	Fix $d=1$  and consider $\X$ as in Theorem \ref{thm:regular}. Assume that $a\colon[0,T]\times\R\times \R\to \R$ is $C^1_b$ with respect to the second and third variables, for every $t\in[0,T]$, and such that Assumption \ref{ass:A} holds. 
	Suppose furthermore that $u_0\in W^{1,2}(\mathbb T^1).$
	
	There exists $T_*\in(0,T]$, and a unique $L^2(\mathbb T^1)$-energy solution $u$ to \eqref{problem_main} up to time $t=T_*$ such that
	\begin{equation}
	\|u\|_{L^\infty(0,T_*;W^{1,2})}<\infty\,.
	\end{equation} 
\end{theorem}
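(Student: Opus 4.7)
Once a solution $u$ with $\|u\|_{L^\infty(0,T_*;W^{1,2})}<\infty$ has been constructed, uniqueness is immediate from Theorem \ref{thm:regular}: the hypothesis $\partial_x u \in L^\infty(0,T_*;L^2)$ corresponds to $r=\infty$, $q=1$ in \eqref{LPS_gradient}, for which \eqref{integ:intro} reads $1/2<1$. Hence the bulk of the proof is the short-time construction of such a ``regular'' energy solution.

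\textbf{The equation for $v=\partial_x u$.} The plan is to work at a smooth level: approximate $\X$ by its canonical lifts $\X(n)$ (Assumption \ref{ass:geometric}), mollify $a$ and $u_0$, and invoke classical parabolic theory to obtain a smooth solution $u^n$ on $[0,T]$. Differentiating \eqref{problem_main} formally in $x$ (writing $a=a(t,x,u^n)$, with partials $a_x,a_z$), the path $v^n:=\partial_x u^n$ solves a rough PDE of the form \eqref{rough_PDE_gene}:
\begin{equation*}
\dd v^n - \partial_x(a\,\partial_x v^n)\dd t \;=\; -\partial_x\bigl(a_z(v^n)^2 + a_x v^n\bigr)\dd t \;+\; \dd \X\, \partial_x v^n \;+\; \dd(\partial_x \X)\, v^n,
\end{equation*}
driven by the affine unbounded rough driver built from $\X$ together with $\partial_x\X$; the hypothesis $\rho(\partial_x\X)<\infty$ is precisely what guarantees this driver is well-defined in the sense of Definition \ref{def:Q}.

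\textbf{Energy estimate and Bihari-type inequality.} Applying the $L^2$-energy framework of Sections \ref{sec:existence}--\ref{sec:parabolic} to the equation on $v^n$, the only non-routine term is the Burgers-type contribution
\begin{equation*}
-\int_{\T^1} \partial_x\bigl(a_z (v^n)^2\bigr)\, v^n\dd x \;=\; \int_{\T^1} a_z (v^n)^2 \partial_x v^n \dd x.
\end{equation*}
The one-dimensional Gagliardo--Nirenberg inequality $|w|_{L^4}^4\leq C|w|_{L^2}^3|\partial_x w|_{L^2}$ together with Young's inequality bound this by $\epsilon|\partial_x v^n|_{L^2}^2 + C_\epsilon |v^n|_{L^2}^6$. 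Absorbing $\epsilon|\partial_x v^n|_{L^2}^2$ into the parabolic dissipation and applying the Rough Gronwall Lemma \ref{lem:gronwall} to the resulting inequality yields a uniform-in-$n$ bound $\|v^n\|_{L^\infty(0,T_*;L^2)}+\lambda\|\partial_x v^n\|_{L^2(0,T_*;L^2)}^2 \leq M$ on an interval $[0,T_*]$ whose length is controlled by $|u_0|_{W^{1,2}}$ via the blow-up time of the majorising ODE $y'=C(1+y^3)$. This is the origin of the locality in time.

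\textbf{Passage to the limit and main obstacle.} Combining the above with the global $L^\infty(L^2)\cap L^2(W^{1,2})$ bound on $u^n$ from Theorem \ref{thm:existence}, the sequence $u^n$ is uniformly bounded in $L^\infty(0,T_*;W^{1,2})\cap L^2(0,T_*;W^{2,2})$. An Aubin--Lions--Simon argument then provides strong convergence of a subsequence in $L^2(0,T_*;W^{1,2})$, which is enough to pass to the limit in the nonlinearity $a(t,x,u^n)$ and to identify the limit as an $L^2$-energy solution with the required extra regularity, following the compactness scheme used for Theorem \ref{thm:existence}. The main obstacle is the rigorous justification of the energy estimate on $v^n$ at the rough level: the coefficient $a_z(t,x,u^n)$ is itself only a controlled path, so closing the Bihari inequality requires careful estimation of the remainder $(v^n)^\natural$ associated to the affine driver built from $(\X,\partial_x\X)$, for which the product formula of Appendix \ref{app:product} and the renormalisation result Theorem \ref{thm:renorm}, applied to the Nemytskii operation $v\mapsto \tfrac12 v^2$, are the crucial ingredients.
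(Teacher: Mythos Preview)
Your proposal is correct and follows essentially the same approach as the paper: differentiate in $x$ to obtain an equation for $v=\partial_x u$ driven by the affine rough driver built from $(\X,\partial_x\X)$, derive an $L^2$-energy inequality in which the Burgers-type nonlinearity is controlled via the one-dimensional Gagliardo--Nirenberg inequality and Young's inequality to produce a cubic term, obtain a Bihari-type bound $E_t\le C(1+\int_0^tE^3)$ giving local-in-time $L^\infty(W^{1,2})\cap L^2(W^{2,2})$ control, and pass to the limit by compactness. Two minor remarks: the paper applies the rough Gronwall lemma first (with $\varphi(s,t)=C\int_s^t(1+|v|_{L^4}^4)$) and only afterwards invokes Gagliardo--Nirenberg to close the Bihari inequality, rather than absorbing the cubic directly into the Gronwall step as you describe; and for uniqueness the paper quotes $(r,q)=(\infty,2)$ using $H^1\hookrightarrow L^4$, whereas your simpler choice $(r,q)=(\infty,1)$ already suffices since $\partial_x u\in L^\infty(L^2)$.
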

The proof of this Theorem will be obtained in Section \ref{sec:higher}, where existence in the above class will be shown locally in time (the uniqueness part is simply a consequence of Theorem \ref{thm:regular}).

\subsection{Sketch of the proof of uniqueness when $d=1$ and $\partial_x X\neq0$}
\label{subsec:explication}
Since it is the main burden of the manuscript, we now explain briefly the main idea for the proof of Theorem \ref{thm:regular}.
If $u^1,u^2$ are two solutions of the same Cauchy problem, denote by $A_t$ the symmetric operator
\[
A_t\varphi:= \partial _x(a(t,x,u^1) \partial _x\varphi )\,,\quad \varphi \in W^{1,2}\,,
\]
and observe that $v:=u^1-u^2$ solves the equation
\[
\left\{\begin{aligned}
&\dd v = \big[Av + \partial _x(a(u^1)-a(u^2)\partial _xu^2)\big]\dd t+ \dd \B v
\\
&v_0=0\,,
\end{aligned}\right.
\]
where we use the shorthand notation $a(u)=a(t,x,u),$ and where $\B$ is the unbounded rough driver introduced in Example \ref{example:URD}.
Using the renormalization property, we have formally with $\beta (z):=|z|$, for any positive test function $\phi $:
\begin{multline*}
\int_{\T^d} \left(|v|_{st}\phi- (B^{1,*}_{st} + B^{2,*}_{st})\phi\right) \dd x
- \langle|v|^{\natural}_{st},\phi \rangle
\\
\leq
\int_s^t \big\langle A|v|,\phi \big\rangle\dd r
+\int_s^t\Big\langle \partial _i\left(\mathrm{sgn}v(a(u^1)-a(u^2))\partial _j u^2\right), \phi \Big\rangle
=\int_s^t \Big\langle \left[A+\Lambda \right]|v|, \phi \Big\rangle\dd r
\end{multline*}
where $\langle v^\natural,\phi \rangle$ is a remainder in $\CC(0,T;W^{-3,1})$ and
$\Lambda $ is the flux term defined by the linear operator
\[
\left[\begin{aligned}
&\Lambda \varphi= \partial _x(b_t(x) \varphi)\,,\quad \varphi \in W^{1,2}\,,
\\
&b_t(x):= \mathbf 1_{v_t(x)\neq 0}\frac{a(t,x,u^1_t(x))-a(t,x,u^2_t(x))}{v_{t}(x)}\partial _xu^2_t(x)\,.
\end{aligned}\right.
\]
But thanks to the fact that $a$ is Lipshitz, we observe that $b$ inherits the integrability of $\partial_x u^2,$ that is
\[
b\in\ L^{2r}(0,T;L^{2q})\quad \text{with}\enskip
\frac{1}{r} + \frac{1}{2q}<1\,.
\]

Next, if $m_t$ denotes a non-negative solution of the following generic, backward problem
\begin{equation}\label{generic}
\left\{
\begin{aligned}
&\dd m + \mathscr A_tm_t= -\dd \B^*m\,,\quad \text{on}\enskip [0,T]\times\T^d\,,
\\
&m_T(\cdot ):=m^T\in L^\infty(\T^d)\enskip \text{given,}
\end{aligned}\right.
\end{equation}
then the product formula, Proposition \ref{pro:product_backward}, asserts the existence of $\langle|v|,m\rangle^{\natural}\in\CC(0,T;\R)$ so that
\begin{equation}
\label{product_v_m}
\begin{aligned}
\int _{\T^d}(|v|m)_{st} \dd x 
&= \int_{\T^d}|v|_{st}m_t + \int_{\T^d}|v_s|m_{st}
\\
&\leq
\left\langle \left(\int_s^t\left[A+\Lambda \right]|v|\dd r\right) + (B^1_{st} + B^2_{st})|v_s|, m_t \right\rangle
\\
&\quad 
+\left\langle |v_s|,\left(\int_s^t\mathscr A m\dd r\right) - (B^{1,*}_{st} + B^{2,*}_{st})m_t\right\rangle
+ \big\langle|v|,m \big\rangle^{\natural}_{st}
\\
&=\int_s^t\Big\langle |v|,\left[A^*+\Lambda^* +\mathscr A \right]m\Big\rangle 
\end{aligned}
\end{equation} 
for every $0\leq s\leq t\leq T.$
In particular, if $m_T=1$ and $\mathscr A= -A - \Lambda ^*,$ i.e.\ if $m\geq 0$ is a solution to the backward dual equation \eqref{m_intro},
then the right hand side of \eqref{product_v_m} vanishes, and we end up with the following weighted inequality
\[
\int _{\T^d}|v_t|m_t \dd x \leq 0\,.
\]
The main difficulty is then to justify that $m$ is bounded below by a positive constant, from which uniqueness will follow. Namely, we aim to prove that
\begin{equation}\label{boundedness_below}
\inf_{t\in[T_+,T],x\in\T^d} m_t(x)>0
\end{equation}
for some $T_+\in[0,T).$ In \cite{hocquet2018generalized}, this was easily seen thanks to the fact that $m$ has a probabilistic representation, a tool that is missing here due to lower regularity of the coefficients.
Instead we have to rely on the Moser Iteration-type technique introduced in \cite{hocquet2018ito}, and generalized here to a broader context. 
Note that the adjoint rough term in \eqref{m_intro} involves a multiplicative noise with a term of zero order, which is precisely related to fact that $\div X\equiv \partial_x X\neq 0.$ Unfortunately, in that case Moser's recursive formula gives rise to a blowing-up term in the estimate on iterated powers of the solution, and hence one cannot infer the $L^\infty$-estimate as easily as in \cite{hocquet2018ito}. Instead, one has to rely on the fact that the solution $m_t(x)$ to \eqref{generic} equals $\exp(\check \Phi _t)(z+1)$ where $z_t(x)$ solves another parabolic equation with pure transport noise, while $\Phi _t(x):=\check\Phi _{T-t}(x)$ is an $L^\infty$-solution of the rough transport equation
\begin{equation}
\label{transport_intro_1D}
\left\{\begin{aligned}
&\dd \Phi = -\dd\X\partial _x\Phi  - \dd(\partial_x\X)\quad \text{on}\enskip [0,T]\times \T^1\,,
\\
&\Phi _0=0\,.
\end{aligned}\right.
\end{equation}
The desired property \eqref{boundedness_below} will then follow by combining the Moser-type estimate on $z$ together with the boundedness of moments of $\Phi $ and $\partial_x \Phi.$ These arguments will be detailed in Sections \ref{sec:parabolic}, \ref{sec:transport} and \ref{sec:uniqueness_2}.

\section{Preliminaries}
\label{sec:preliminaries}
In this section, we state key results that will be used in the sequel, among which is the \textit{renormalization property} for parabolic equations with transport rough input.
We then recall the ``rough Gronwall Lemma'' as was introduced in \cite{deya2016priori,hofmanova2016rough}, which in our context is a crucial tool to obtain a priori estimates.  Another essential result is that of the remainder estimates, Proposition \ref{pro:apriori}, which in comparison with \cite{deya2016priori} are extended to affine-linear rough families of partial differential operators, i.e.\ of the form given in Definition \ref{def:Q}, and which will be encountered in later sections.

In the sequel, we fix $d\geq1$ and assume the following.

\begin{assumption}[strong parabolicity]
	\label{ass:parabolic}
	We are given symmetric coefficients $(a^{ij})\colon[0,T]\times\mathbb T^d\to \R^{d\times d}$ which are measurable and such that
	\[
	\lambda|\xi |^2\leq 
	a^{ij}(t,x)\xi ^i\xi ^j
	\leq \lambda^{-1}|\xi |^2,\quad \text{for all}\enskip \xi \in \R^d.
	\]
	for some constant $\lambda >0.$
\end{assumption}

The main uniqueness results (Theorems \ref{thm:uniqueness} \& \ref{thm:regular}) will be partially based on the following crucial property.

\begin{theorem}[Renormalization property]
	\label{thm:renorm}
	Let $\X=(X^i,\LL^i)_{1\leq i\leq d}$ be such that Assumption \ref{ass:geometric} holds, and for $\varphi \in W^{1,2}$ define
	\[
	A_tu:=\partial _i(a^{ij}(t,\cdot )\partial _iu)\,,\quad t\in[0,T]\,.
	\]
	Let $u$ be an $L^2$-solution of the parabolic equation 
	\begin{equation}
	\label{parabolic}
	\left\{\begin{aligned}
	&\dd u= \big[Au +F\big]\dd t +\dd \X ^i\partial _iu,\quad \text{on}\enskip [0,T]\times\mathbb T^d,
	\\
	&u_0=u^0\in L^2\,,
	\end{aligned}\right.
	\end{equation}
	where $F\in L^2(0,T;W^{-1,2}),$ and $\partial _i=\frac{\partial }{\partial x_i},$ and where the coefficents $a^{ij}$ satisfy Assumption \ref{ass:parabolic}.
	
	Then, for every $\beta \in C^2(\R)$ with $|\beta '|_{L^\infty}+ |\beta ''|_{L^\infty}<\infty,$ it holds in the $L^1$-sense:
	\begin{equation}
	\label{chain_rule_thm}
	\quad 
	\left\{\begin{aligned}
	&\dd \beta \circ u = \Big[A\left(\beta \circ u\right) - (\beta ''\circ u) a^{ij}\partial _iu\partial _ju + (\beta'\circ u )F\Big] \dd t + \dd \X^i\partial_i \left(\beta \circ u\right),
	\\
	&\quad \quad \quad \quad \quad 
	\text{on}\enskip [0,T]\times\mathbb T^d,
	\\
	&\beta \circ u_0=\beta \circ u^0\quad \text{on}\enskip \mathbb T^d\,.
	\end{aligned}\right.
	\end{equation}
	
	More explicitly,
	the path $[0,T]\to L^1,$ $t\mapsto \beta (u_t):=\beta\circ u_t$ is well-defined, controlled by $Q=X\cdot \nabla $ in the sense of \eqref{controlled};
	moreover writing  $F=\partial_i f^i -f^0$ with $f\in L^2(L^2),$ it satisfies, for any $\phi \in W^{2,\infty}$ and $(s,t)\in\Delta ,$ the relation
	\begin{multline}
	\label{chain_rule_thm_expl}
	\left \langle \beta (u)_{st},\phi \right \rangle +\iint_{[s,t]\times \T^d} \Big[\beta ''(u)\partial _iua^{ij}\partial _ju_r\phi + \beta '(u)a^{ij}\partial _ju\partial _i\phi
	\\
	+\beta ''(u)\partial _iuf^i\phi
	+ \beta '(u)(f^i\partial _i\phi + f\phi )
	\Big]\dd x\dd r 
	\\
	=\int_{\T^d}\beta (u_s)(B^{1,*}_{st}+B^{2,*}_{st})\phi \dd x + \left \langle \beta (u)^{\natural}_{st},\phi \right \rangle
	\end{multline}
	where $\beta (u)^\natural\in\CC(0,T;W^{-3,1}),$ i.e.\ there exists $a>1$ and a control function $\omega $ such that for every $(s,t)\in\Delta $,
	$\sup_{|\phi |_{W^{3,\infty}}\leq 1}\langle \beta (u)^{\natural}_{st},\phi \rangle\leq \omega (s,t)^a.$
\end{theorem}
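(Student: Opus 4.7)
The plan is to derive the formula by spatial mollification, applying a smooth chain rule to the regularized solution, and then passing to the limit by exploiting the structure of the unbounded rough driver $\B$ together with the parabolic regularization. Fix a standard periodic family of mollifiers $(\rho_\epsilon)_{\epsilon>0}$ and set $u^\epsilon := \rho_\epsilon \ast u$. Convolving \eqref{parabolic} in space yields an equation of the form
\begin{equation*}
\dd u^\epsilon = \bigl[A u^\epsilon + F^\epsilon + C^{A,\epsilon}\bigr]\dd t + \dd \X^i\, \partial_i u^\epsilon + \dd C^{\X,\epsilon},
\end{equation*}
where $C^{A,\epsilon}:= \rho_\epsilon\ast(Au)-Au^\epsilon$ is the classical parabolic commutator and $C^{\X,\epsilon}$ is the two-level rough transport commutator obtained by comparing $\rho_\epsilon\ast(\dd\X^i \partial_i u)$ to $\dd\X^i \partial_i u^\epsilon$, expressed through the pair $[\rho_\epsilon\ast, B^i]u$ for $i=1,2$.

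For each fixed $\epsilon>0$ the function $u^\epsilon$ is smooth in $x$, so for each $x\in\T^d$ the path $t\mapsto u^\epsilon_t(x)$ is controlled by $\X$ with Gubinelli derivative $\partial_i u^\epsilon(x)$. A direct sewing-lemma expansion then gives
\begin{equation*}
\beta(u^\epsilon)_{st} = (B^1_{st}+B^2_{st})\beta(u^\epsilon_s) + D^\epsilon_{st} + \beta(u^\epsilon)^{\natural,\epsilon}_{st},
\end{equation*}
where $D^\epsilon$ collects the parabolic drift together with the It\^o-type correction $-\beta''(u^\epsilon) a^{ij} \partial_i u^\epsilon \partial_j u^\epsilon$, and $\beta(u^\epsilon)^{\natural,\epsilon}\in\CC(0,T;W^{-3,1})$. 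The crucial point here is the geometric identity of Lemma \ref{lem:weak_geo}, which, applied to $\beta(u^\epsilon)$, shows that the level-two action of $B^2$ decomposes as $\frac12(B^1)^2 + [\B]$ and therefore produces exactly the quadratic cross-term $\beta''(u^\epsilon) (X^i_{st})(X^j_{st}) \partial_i u^\epsilon \partial_j u^\epsilon$ that, combined with the sewing machinery, yields the claimed Burgers-type Itô correction. Uniform-in-$\epsilon$ bounds on $\beta(u^\epsilon)^{\natural,\epsilon}$ are obtained via Proposition~\ref{pro:apriori} applied to the $L^1$-solution $\beta(u^\epsilon)$, using $\|\beta'\|_\infty+\|\beta''\|_\infty<\infty$ and the energy bound $u\in L^2(W^{1,2})$ guaranteed by the parabolicity.

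The last step is the passage to the limit $\epsilon\to 0$. The convergence of the parabolic drift terms and of the It\^o-type correction $\beta''(u^\epsilon) a^{ij} \partial_i u^\epsilon \partial_j u^\epsilon$ to its expected limit in $L^1([0,T]\times\T^d)$ follows from the $L^2$-convergence of $\nabla u^\epsilon$ to $\nabla u$, the boundedness of $\beta',\beta''$ and dominated convergence. The parabolic commutator $C^{A,\epsilon}$ tends to zero in $L^1(W^{-1,1})$ by standard mollifier estimates. The uniform $\CC$-bounds then produce, via the closure property of the sewing lemma, a limit remainder $\beta(u)^\natural\in \CC(0,T;W^{-3,1})$, which identifies \eqref{chain_rule_thm_expl}. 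The main obstacle is the rough commutator $C^{\X,\epsilon}$: its level-two part must be shown to vanish in $\CC(W^{-3,1})$, and this is precisely where one invokes the DiPerna–Lions-type commutator estimates for unbounded rough drivers developed in \cite{bailleul2017unbounded,hocquet2018ito}, whose proofs transfer to $\T^d$ because they are local in space and because the periodic mollifier commutes with translations. Geometricity of $\X$ is essential here, as it ensures that the singular contributions in $[\rho_\epsilon\ast, B^2]u$ cancel against the squared level-one commutator, leaving only terms that the $W^{3,\infty}$-regularity of $X$ and the $W^{2,\infty}$-regularity of $\LL$ can absorb.
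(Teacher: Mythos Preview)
The paper does not give a self-contained argument: it simply invokes \cite[Theorem~1]{hocquet2018ito} and notes that the passage from $\R^d$ to $\T^d$ is routine (with the bonus that $\beta(0)=0$ is no longer required). That reference, like Appendix~\ref{app:product} and the proof of Proposition~\ref{pro:product_backward} here, proceeds by \emph{tensorization}: one first proves a product formula for $u\otimes v$ on the doubled space $\T^d\times\T^d$, restricts to the diagonal via a blow-up of the test function, iterates to obtain polynomial powers, and finally approximates a general $\beta$. Your spatial-mollification route is a legitimate alternative, closer to the DiPerna--Lions framework and to \cite{bailleul2017unbounded}; both strategies ultimately rest on the same commutator analysis, but tensorization packages the rough commutators once and for all into the driver $\boldsymbol\Gamma$ on the doubled space rather than tracking $[\rho_\epsilon\ast,B^i]$ separately.

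That said, your sketch contains a genuine conceptual error. You claim that the decomposition $B^2=\tfrac12(B^1)^2+[\B]$ ``produces exactly the quadratic cross-term $\beta''(u^\epsilon)X^i_{st}X^j_{st}\partial_iu^\epsilon\partial_ju^\epsilon$ that \dots\ yields the claimed Burgers-type It\^o correction''. This is backwards. The correction $-\beta''(u)a^{ij}\partial_iu\partial_ju$ in \eqref{chain_rule_thm} involves the diffusion matrix $a^{ij}$, not the rough coefficients $X^i$; it arises \emph{entirely} from the drift via the weak identity $\beta'(u)Au=A\beta(u)-\beta''(u)a^{ij}\partial_iu\partial_ju$. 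The role of geometricity (Lemma~\ref{lem:weak_geo}) is exactly the opposite of what you describe: when you Taylor-expand $\beta(u^\epsilon_t)-\beta(u^\epsilon_s)$ and insert the Euler--Taylor expansion of $u^\epsilon$, the second-order Taylor term $\tfrac12\beta''(u^\epsilon_s)(u^\epsilon_{st})^2\approx\tfrac12\beta''(u^\epsilon_s)(B^1_{st}u^\epsilon_s)^2$ precisely supplies the difference $B^2_{st}\beta(u^\epsilon_s)-\beta'(u^\epsilon_s)B^2_{st}u^\epsilon_s$, so that the rough part of the chain rule closes \emph{without} any extra term. Similarly, your last sentence misattributes the vanishing of $[\rho_\epsilon\ast,B^2]u$ to geometricity; those commutator bounds are purely analytic and depend only on the $W^{3,\infty}$ (resp.\ $W^{2,\infty}$) regularity of $X$ (resp.\ $\LL$). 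Geometricity is an algebraic constraint that enters in the cancellation just described, not in the commutator estimates themselves.
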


\begin{proof}
The only difference here with the property shown in \cite{hocquet2018ito} is that the full space $\R^d$ is replaced by the $d$-dimensional torus $\mathbb T^d,$ but the proof is carried out in the same way (noticing for instance that \cite[Prop.\ 4.1]{hocquet2018ito} is a local statement). Hence, we only sketch the main arguments and refer to the latter reference for details.  

If $u$ is bounded, it is possible to iterate products as in \eqref{concl:prod} to obtain the chain rule on polynomials.
By the uniform remainder estimates satisfied by elements of the parabolic class $\mathcal H^{\alpha,p}_B$ (see \cite[Section 4]{hocquet2018ito}), one can then conclude from a density argument that \eqref{chain_rule_thm_expl} holds for such $u$.
The formula is then extended to \emph{any} solution, thanks to the observation that a ``sufficiently large'' class of rough parabolic equations has bounded solutions (for instance, this is the case when $F$ above belongs to $L^r(0,T;L^q)$ with $\frac{1}{r}+\frac{d}{2q}<1$).
Boundedness among the latter class is proved by applying a rough Moser iteration lemma (\cite[Proposition 6.2]{hocquet2018ito}). See also Theorem 5.5 below.
\end{proof}

Besides the renormalization property, one of the core arguments that will we used repeatedly in this paper is a Gronwall-type lemma, well-adapted to incremental equations of the form \eqref{nota:solution}.
\begin{lemma}[Rough Gronwall]
	\label{lem:gronwall}
	Let $E\colon[0,T]\to \R_+$ be a path such that there exist constants $\kappa,L>0,$ a regular control $\omega ,$ and a superadditive map $\varphi $ with:
	\begin{equation}\label{rel:gron}
	E_{st}\leq \left(\sup_{s\leq r\leq t} E_r\right)\omega (s,t)^{\kappa }+\varphi (s,t),
	\end{equation}
	for every $(s,t)\in\Delta$ under the smallness condition $\omega (s,t)\leq L$.
	
	Then, there exists a constant $C _{\kappa ,L}>0$ such that
	\begin{equation}
	\label{concl:gron}
	\sup_{0\leq t\leq T}E_t\leq \exp\left(\frac{\omega (0,T)}{C _{\kappa ,L}}\right)\left[E_0+\sup_{0\leq t\leq T}\left|\varphi (0,t)\right|\right].
	\end{equation}
\end{lemma}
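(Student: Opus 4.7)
The plan is to iterate the incremental bound \eqref{rel:gron} along a suitable finite subdivision of $[0,T]$, chosen small enough so that the $\omega^\kappa$ factor multiplying the supremum can be absorbed into the left-hand side; the number of steps in the subdivision is then controlled in terms of $\omega(0,T)$, which delivers the exponential growth factor appearing in \eqref{concl:gron}.

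First, fix $\eta:=\min(L,2^{-1/\kappa})>0$ so that both $\eta\leq L$ and $\eta^\kappa\leq 1/2$. Continuity of the regular control $\omega$, together with $\omega(t,t)=0$ and superadditivity, yields a partition $0=t_0<t_1<\cdots<t_N=T$ with $\omega(t_i,t_{i+1})\leq \eta$ and $N\leq 1+\omega(0,T)/\eta$. Writing $E_{st}=E_t-E_s$ and applying \eqref{rel:gron} to the pair $(s,r)$ for any $r\in[s,t]$, the superadditivity of $\omega$ and $\varphi$ gives
\begin{equation*}
E_r\leq E_s+\omega(s,t)^{\kappa}\sup_{s\leq\tau\leq t}E_\tau+\varphi(s,t).
\end{equation*}
Passing to the supremum in $r\in[s,t]$ and using $\omega(s,t)^\kappa\leq 1/2$ absorbs the supremum into the left-hand side, producing the local bound
\begin{equation*}
\sup_{s\leq r\leq t}E_r\leq 2E_s+2\varphi(s,t).
\end{equation*}

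Applying this to each $(s,t)=(t_i,t_{i+1})$ gives the recursion $E_{t_{i+1}}\leq 2E_{t_i}+2\varphi(t_i,t_{i+1})$, which unravels, together with the superadditivity bound $\sum_{j=0}^{N-1}\varphi(t_j,t_{j+1})\leq\varphi(0,T)\leq\sup_{0\leq s\leq T}|\varphi(0,s)|$, to
\begin{equation*}
E_{t_i}\leq 2^{i}\Big(E_0+\sup_{0\leq s\leq T}|\varphi(0,s)|\Big).
\end{equation*}
One further application of the local bound controls $E_t$ for any $t\in[t_i,t_{i+1}]$ by the same quantity up to an extra multiplicative constant. Since $N\leq 1+\omega(0,T)/\eta$, one obtains $2^{N+1}\leq 4\exp\bigl(\omega(0,T)\log 2/\eta\bigr)$, and \eqref{concl:gron} follows upon setting $C_{\kappa,L}:=\eta/\log 2$ and absorbing the remaining numerical factor by slightly enlarging $C_{\kappa,L}$.

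The argument is otherwise entirely elementary; the only non-routine step is the quantitative partition of Step 1, namely the existence of a subdivision of $[0,T]$ whose $\omega$-mesh does not exceed $\eta$ and whose cardinality is controlled by $1+\omega(0,T)/\eta$. This is a classical consequence of continuity and superadditivity of a regular control, and it is precisely what converts the geometric iteration into the exponential bound of the conclusion; the absorption step and the bookkeeping of the constants constitute the remaining, purely mechanical work.
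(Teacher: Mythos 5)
Your argument---cover $[0,T]$ by finitely many intervals on which the $\omega$-increment is small enough that $\omega^\kappa\leq 1/2$, absorb the supremum on each such interval, iterate the resulting recursion, and count the intervals via superadditivity---is precisely the strategy of the reference (Deya--Gubinelli--Hofmanov\'a--Tindel) to which the paper defers, and the mechanics through the recursion $E_{t_{i+1}}\leq 2E_{t_i}+2\varphi(t_i,t_{i+1})$ are sound (you do use $\varphi\geq 0$ to pass from $\varphi(s,r)$ to $\varphi(s,t)$ via superadditivity, which is unstated in the lemma but holds in every application in the paper).

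The one genuine flaw is the final sentence: ``absorbing the remaining numerical factor by slightly enlarging $C_{\kappa,L}$'' goes the wrong way, since enlarging $C_{\kappa,L}$ \emph{shrinks} $\exp(\omega(0,T)/C_{\kappa,L})$. Shrinking $C_{\kappa,L}$ would help only when $\omega(0,T)$ is bounded away from zero; no constant independent of $\omega(0,T)$ can absorb your factor $4$, since as $\omega(0,T)\to 0$ your bound tends to $4\bigl(E_0+\sup|\varphi|\bigr)$ while \eqref{concl:gron} as typed tends to $E_0+\sup|\varphi|$. This discrepancy is really at the level of the statement: the DGHT version of the lemma carries a multiplicative constant (a factor $2$) outside the exponential, which the present paper has silently dropped. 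Your argument proves exactly the DGHT form, which is what is actually invoked throughout the manuscript; so the gap is not in your construction but in the missing prefactor of the quoted conclusion, and the last sentence of your proof should simply acknowledge the multiplicative constant rather than try to suppress it.
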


\begin{proof}
	See \cite{deya2016priori}.
\end{proof}

We are now interested in remainder estimates for equations of the form
\begin{equation}
\label{ansatz_Q}
\dd v = F\dd t + \dd \Q(v)\,,\quad \text{on}\enskip [0,T]\times \T^d
\end{equation} 
for distributional drifts $F\in L^p(0,T;W^{-2,p}),$ understood as the following Euler-Taylor expansion
\begin{equation}
\label{euler-taylor_Q}
v_{st} =\int_s^tF_r\dd r + Q^{1}_{st}(v_s) + Q^2_{st}(v_s)+ v^\natural_{st}\,,
\end{equation}
as an equality in $W^{-2,p}$ for every $(s,t)\in\Delta ,$ where the above integral is understood in the Bochner sense, 
and under the smallness condition that $v^\natural\in \CC(0,T;W^{-3,p}).$

Here $\Q:=(Q^1,Q^2)$ is by assumption an affine linear, unbounded rough driver as in Definition \ref{def:Q}.
The usual Chen's relations are replaced by the affine counterpart \eqref{chen}, which for the reader's convenience are rewritten here.
For every $(s,\theta ,t)\in\Delta _2$, it holds
\begin{equation}
\label{affine_chen}
\left[\begin{aligned}
&\delta Q^{1}_{s\theta t}=0
\\
&\delta Q^2_{s\theta t}= \left(Q^1-Q^1(0)\right)_{\theta t}\circ Q^1_{s\theta }
\end{aligned}\right.
\end{equation}
where $\circ$ denotes composition.

Prior to state the desired estimates, we now introduce what in our context plays the role of Gubinelli's controlled path space \cite{gubinelli2004controlling}, with respect to the first level of $\Q$, i.e.\ the $\alpha $-H\"older map 
\[t\mapsto Q_t:=Q^1_{0 t},\quad [0,T]\to \mathrm{Lip}(W^{k,p},W^{k-1,p}),\quad k=-2,-1,0.\]

\begin{definition}[Controlled paths]
	\label{def:controlled}
	Given $\alpha \in(1/3,1/2],$ we define the {\emph{controlled path space}} $\mathcal D_Q^{\alpha,p}$
	as the linear space of couples $(g,g')\in L^\infty(0,T;L^p)$ such that $g$ is controlled by $Q$ with Gubinelli derivative $g' $, by which it is understood that there exists a control $\omega_1 \colon\Delta \to \R_+$ such that for every $(s,t)\in\Delta$
	\begin{equation}
	\label{hyp:control_R_g}
	|R^g_{st}|_{W^{-2,p}}\leq
	\omega _1(s,t)^{2\alpha }\,,
	\end{equation}
	where
	\begin{equation}
	\label{R_g}
	R^g_{st}:=g_{st}-Q_{st}^1(g_s') .
	\end{equation}
	
	Moreover, we assume the existence of another control $\omega _2$ so that for each $(s,t)\in\Delta ,$
	\begin{equation}
	\label{hyp:control_dg}
	|g_{st}|_{W^{-1,p}}\leq \omega _2(s,t)^{\alpha }\,,
	\end{equation} 
	and similarly for $g'$.

	We shall denote by $\|R^g\|_{2\alpha ,-2,p}(s,t)$ (resp.\ $\|g\|_{\alpha ,-1,p}(s,t)$) the smallest possible right hand side for \eqref{hyp:control_R_g} (resp.\ \eqref{hyp:control_dg}).
	Equipped with the norm
	\begin{multline}
	\label{DQ_norm}
	\| (g,g')\|_{\mathcal D^{\alpha,p}_Q}:=\|g\|_{L^\infty(0,T;L^p)}+\|g'\|_{L^\infty(0,T;L^p)} 
	\\
	+ \|R^g\|_{2\alpha,-2,p}(0,T) + \|g\|_{\alpha ,-1,p}(0,T)+\|g'\|_{\alpha ,-1,p}(0,T)\,,
	\end{multline}
	the linear space $\mathcal D_Q^{\alpha,p}$ forms a Banach space.
\end{definition}

As is the case of standard controlled paths spaces \cite{friz2014course}, it should be observed that the previous notion only depends on the \textit{first level} $Q$ of $\Q,$ which in turn explains the notation $\mathcal D_Q^\alpha$.

The following statement is an affine-linear extension of the corresponding result of Deya, Gubinelli, Hofmanov\'a and Tindel \cite{deya2016priori}.

\begin{proposition}[Remainder estimates]
	\label{pro:apriori}
	Let $p\in[1,\infty]$ and let $v\in L^\infty(0,T;L^p)$ be a solution of \eqref{ansatz_Q}, in the sense of the Euler-Taylor expansion \eqref{euler-taylor_Q},
	for some $F\in L^p(0,T;W^{-2,p}).$
	
	There are constants $C,L>0$ depending only on $\alpha $
	such that for each $(s,t)\in\Delta $ subject to the smallness assumption 
	$|t-s|\leq L,$
	it holds the estimates
	\begin{equation}
	\label{estimate_remainder}
	|v^{\natural}_{st}|_{W^{-3,p}}
	\leq C[\Q]_\alpha  \left((t-s)^{3\alpha }\| v\|_{L^\infty(s,t;L^p)}
	+(t-s)^\alpha \int_s^t|F_{r}|_{W^{-2,p}}\dd r\right)\,,
	\end{equation}
	and
	\begin{equation}
	\label{estimate_remainder_2}
	|v^{\natural}_{st}|_{W^{-2,p}}
	\leq C[\Q]_\alpha  \left((t-s)^{2\alpha }\| v\|_{L^\infty(s,t;L^p)}
	+\int_s^t|F_{r}|_{W^{-2,p}}\dd r\right).
	\end{equation}
	
	Moreover, assuming that $\alpha \leq 1/2$ and denoting by
	\[R^v_{st}:= v_{st}-Q^1_{st}(v_s)\,,\]
	there is $0<L_0(\alpha ,[\Q]_\alpha ,\|F\|_{L^1(W^{-2,p})})\leq L$ such that for $(s,t)\in\Delta $ with $t-s\leq L_0,$ it holds the estimates
	\begin{equation}
	\label{estimate_remainder_3}
	\left[\begin{aligned}
	&|R^v_{st}|_{W^{-2,p}}
	\leq C[\Q]_\alpha  \left((t-s)^{2\alpha }\| v\|_{L^\infty(s,t;L^p)}
	+\int_s^t|F_{r}|_{W^{-2,p}}\dd r\right)
	\\
	&|v_{st}|_{W^{-1,p}}\leq C \left(\left[\int_s^t|F_r|_{W^{-2,p}}\dd r\right]^{\alpha } + (t-s)^\alpha \|v\|_{L^\infty(s,t;L^p)}\right)\,.
	\end{aligned}\right.
	\end{equation}
	In particular, $v$ is controlled by $Q$ with Gubinelli derivative $v'=v$, and the previous bounds can be summarized by writing
	\[
	\|v\|_{\mathcal D_Q^{\alpha ,p}}\leq C \left ([\Q]_\alpha ,\alpha \right )\left (\|v\|_{L^\infty(L^p)}+ \|F\|_{L^1(W^{-2,p})}\right )\enskip.
	\]
	
\end{proposition}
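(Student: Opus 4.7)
The proof is by the Sewing Lemma applied to $v^\natural$, combined with a bootstrap using the Euler--Taylor expansion \eqref{euler-taylor_Q}. The first step is algebraic: decomposing each affine operator as $Q^i_{st}=L^i_{st}(\cdot)+Q^i_{st}(0)$ with $L^i_{st}$ the linear part, the affine Chen relations \eqref{affine_chen} force $\delta L^1_{s\theta t}=0$ and $\delta L^2_{s\theta t}=L^1_{\theta t}\circ L^1_{s\theta}$, so the linear parts satisfy the standard Chen relations of a linear unbounded rough driver, while the translations $Q^i_{st}(0)$ satisfy their own affine analog. Combining this with the additivity of $v_{st}$ and of $\int_s^t F_r\dd r$, a direct computation starting from \eqref{euler-taylor_Q} yields the defect identity
\begin{equation*}
\delta v^\natural_{s\theta t}=L^1_{\theta t}(R^v_{s\theta})+L^2_{\theta t}(v_{s\theta}),\qquad R^v_{s\theta}:=v_{s\theta}-Q^1_{s\theta}(v_s),
\end{equation*}
which is the natural affine analog of the formula from \cite{deya2016priori} in the linear case, and reduces the problem to adapting that paper's Sewing argument with extra bookkeeping of the translations.

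Next, the Lipschitz bounds ($\star$) applied to $L^1,L^2$, combined with the Euler--Taylor expression $R^v_{s\theta}=\int_s^\theta F\dd r+Q^2_{s\theta}(v_s)+v^\natural_{s\theta}$ and the trivial bound $|v_{s\theta}|_{L^p}\leq 2\|v\|_{L^\infty(L^p)}$, give
\begin{equation*}
|\delta v^\natural_{s\theta t}|_{W^{-3,p}}\leq[\Q]_\alpha\bigl[(t-\theta)^\alpha|R^v_{s\theta}|_{W^{-2,p}}+(t-\theta)^{2\alpha}|v_{s\theta}|_{W^{-1,p}}\bigr],
\end{equation*}
in which both factors on the right are controlled via Euler--Taylor by quantities built from $[\Q]_\alpha\|v\|_\infty\cdot(\theta-s)^{k\alpha}$, $\int_s^\theta|F|_{W^{-2,p}}\dd r$, and a self-referential contribution from $v^\natural$ on the subinterval $[s,\theta]$. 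Invoking the a priori hypothesis $v^\natural\in\CC(0,T;W^{-3,p})$, the defect is bounded by $\omega(s,t)^{3\alpha}$ for a suitable superadditive control $\omega$; since $3\alpha>1$, the Sewing Lemma (as recalled in \cite{gubinelli2010rough}) applies and yields the sharp $W^{-3,p}$ estimate \eqref{estimate_remainder} on all intervals with $|t-s|\leq L=L(\alpha)$. The $W^{-2,p}$ estimate \eqref{estimate_remainder_2} then follows by plugging \eqref{estimate_remainder} back into the Euler--Taylor identity and bounding the remaining terms in $W^{-2,p}$, using in particular $|Q^2_{st}(v_s)|_{W^{-2,p}}\leq[\Q]_\alpha(t-s)^{2\alpha}\|v\|_\infty$.

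For the controlled-path bounds \eqref{estimate_remainder_3}, I would restrict further to a smaller interval of length $L_0\leq L$, chosen (depending on $[\Q]_\alpha$ and $\|F\|_{L^1(W^{-2,p})}$) so as to absorb the self-referential $|v^\natural|_{W^{-2,p}}$ contribution via the $(t-s)^\alpha$ factor coming from $L^1$'s Lipschitz norm. The sharp bound on $|R^v|_{W^{-2,p}}$ is then a rearrangement of \eqref{estimate_remainder_2}, while the bound on $|v_{st}|_{W^{-1,p}}$ is obtained by Gagliardo--Nirenberg interpolation between the trivial bound $|v_{st}|_{L^p}\leq 2\|v\|_\infty$ and the sharp $W^{-2,p}$ bound, followed by the subadditivity $(a+b)^\alpha\leq C(a^\alpha+b^\alpha)$ to produce the asymmetric exponent $\alpha$ on $\int|F|_{W^{-2,p}}\dd r$. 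Summing the local bounds over a finite partition of $[0,T]$ of mesh $\leq L_0$ yields the global estimate $\|v\|_{\mathcal D_Q^{\alpha,p}}\lesssim\|v\|_{L^\infty(L^p)}+\|F\|_{L^1(W^{-2,p})}$. The main technical obstacle is precisely closing the self-referential estimate in the bootstrap: the bound on $\delta v^\natural$ in $W^{-3,p}$ depends on $v^\natural$ itself in $W^{-2,p}$ through $R^v$, and only by choosing $L_0$ small enough does the $(t-s)^\alpha$ smallness of $L^1$ absorb the self-reference.
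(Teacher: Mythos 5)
Your proposal matches the paper's proof closely. The paper's own argument is short: it computes the key algebraic identity
\[
\delta v^\natural_{s\theta t}=\widetilde Q^1_{\theta t}\bigl(v_{s\theta}-Q^1_{s\theta}(v_s)\bigr)+\widetilde Q^2_{\theta t}(v_{s\theta})\,,
\]
where $\widetilde Q^i:=Q^i-Q^i(0)$ denotes the linear part (identical to your $L^i$), and then defers the remaining Sewing and bootstrap estimates to \cite{deya2016priori} and \cite{hocquet2018ito}. Your derivation of this same identity from the affine Chen relations \eqref{affine_chen} is correct, as is your observation that the linear parts $L^i$ satisfy the standard (linear) Chen relations while the translations $Q^i_{st}(0)$ carry their own compatible relations. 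Your subsequent description of the bootstrap — bounding $\delta v^\natural$ in $W^{-3,p}$ via the Lipschitz norms of $L^1,L^2$, invoking the a priori $\CC$-regularity of $v^\natural$ to close the self-reference through $R^v$ on a short interval $L_0$, applying the Sewing Lemma, and feeding the result back through the Euler--Taylor expansion to obtain the $W^{-2,p}$ and $W^{-1,p}$ bounds — is exactly the structure of \cite[Thm.~2.9]{deya2016priori} that the paper references, so you are filling in the details the paper deliberately omits rather than taking a different route. Two minor imprecisions: the interpolation used to get $|v_{st}|_{W^{-1,p}}$ is a Sobolev-scale interpolation of the form $|f|_{W^{-1,p}}\lesssim|f|_{L^p}^{1-\theta}|f|_{W^{-k,p}}^{\theta}$ rather than Gagliardo--Nirenberg per se, and the resulting exponents (matching the stated $\alpha$ on $\int|F|$ and $\alpha$ on $t-s$ simultaneously) require a slightly more careful bookkeeping than a single mid-point interpolation followed by subadditivity; but these are cosmetic and do not affect the soundness of the argument.
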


\begin{proof}
	Since the main idea is essentially contained in \cite{deya2016priori}, we content ourselves to show that applying $\delta$ to the remainder $v^{\natural}$ yields a similar expression as in \cite[Thm.~2.9, eq.~(2.24)]{deya2016priori}.
	
	Define $\widetilde Q^i$ to be the linear part of $Q^i,$ i.e.\ $\widetilde Q^i_{st}:=Q^i_{st}-Q^i_{st}(0),$ for $i=1,2.$
	Thanks to \eqref{affine_chen}, we have
	\begin{equation}\label{content}
	\begin{aligned}
	\delta v^\natural_{st} 
	&= Q^1_{\theta t}(u_\theta )-Q^1_{\theta t}(u_s) - \widetilde Q^1_{\theta t}\circ Q^1_{s\theta }(u_s) +
	Q^2_{\theta t}(u_\theta )-Q^2_{\theta t}(u_s)
	\\
	&=\widetilde Q^1_{\theta t}(u_{s\theta } -Q^1_{s\theta }(u_s)) +
	\widetilde Q^2_{\theta t}(u_{s\theta} )
	\end{aligned}
	\end{equation}
	for any $(s,\theta ,t)\in\Delta _2,$
	where we used the fact that $Q^1_{\theta t}$ is affine linear.
	The remainder of the proof follows exactly the same steps as that of the previous reference, and hence is omitted (see also \cite{hocquet2018ito} for an alternative proof).
\end{proof}

\section{Existence of $L^2$-solutions: proof of Theorem \ref{thm:existence}}
\label{sec:existence}

Let $X(n)\in C^1(0,T;W^{3,\infty})$ such that the canonical lift $\X(n)\equiv S_2(X(n))$ converges to $\X$ for the metric defined in \eqref{rho_alpha}.
By classical results on quasilinear equations \cite[Chapter 5]{ladyzhenskaya1968linear}, and since our notion of solution encompasses the usual one in the case of smooth coefficients (as seen for instance in \cite[Section~4]{hocquet2017energy}), we see that there exists a unique solution $u(n)\in L^\infty(0,T;L^2)\cap L^2(0,T;W^{1,2})$ to the problem
\begin{equation}
\label{equation_un}
\left\{\begin{aligned}
&\dd u(n) - \partial _i\big(a^{ij}(t,x,u(n))\partial _j u(n)\big)\dd t=\dd \X^i(n)\partial _iu(n),
\\
&u_0(n)=u_0\,,
\end{aligned}\right.
\end{equation}
in the sense of Definition \ref{def:solution}.
Moreover, using the renormalization property (Theorem \ref{thm:renorm}) with the function $\beta (z)=z^2,$
we obtain that for every $\phi \in W^{3,\infty}$ and $(s,t)\in\Delta :$
\begin{multline}
\label{square}
\int_{\mathbb T^d}u^2(n)_{st}\phi \dd x 
\\
+ 2\iint_{[s,t]\times\mathbb T^d}\Big[\partial _ju(n) a^{ij}(t,x,u(n)) \partial _iu(n)\phi
+u(n)a^{ij}(t,x,u(n))\partial _ju(n)\partial_i\phi \Big]\dd x\dd t 
\\
= \int_{\T^d} u^2(n)(B^{1}_{st}(n) + B^2_{st}(n))^*\phi\dd x + \langle u_{st}^{2,\natural}(n),\phi \rangle,
\end{multline}
where $u_{st}^{2,\natural}(n)$ denotes some remainder in $\CC(0,T;W^{-3,1}).$
Making the choice $\phi =1,$ and then estimating the term $\langle u_{st}^{2,\natural}(n),1 \rangle$ by the $W^{-3,1}$-norm of the remainder, we obtain thanks to Proposition \ref{pro:apriori} and the bound below for $a^{ij}$:
\begin{multline}
\label{En}
E_t(n)-E_s(n)\leq
C(T,\lambda ,\rho_\alpha(\X))(t-s)^{\alpha }|u_s(n)|_{L^2}^2 
\\
+ C(\lambda ,\rho_\alpha(\X))(t-s)^{\alpha }\int_s^t(|u(n)|_{L^2}^2 +|\nabla u(n)|^2)\dd r,
\end{multline}
where we let for convenience $E_t(n):=|u_t|_{L^2}^2+\int_0^t|\nabla u(n)|_{L^2}^2\dd r.$
For $|t-s|$ small enough (depending on $\rho_\alpha(\X),\lambda $ but not on $n$) we can absorb the last term to the left, yielding \eqref{rel:gron} with $\kappa =\alpha $ and $\varphi=0.$ In particular, thanks to Lemma \ref{lem:gronwall} we obtain that
\begin{equation}
\label{bd:existence_1}
\sup_{n\in\mathbb N} \|u(n)\|_{L^\infty(L^2)} + \int_0^T|\nabla u(n)|_{L^2}^2\dd t\leq C|u_0|_{L^2}^2.
\end{equation}
Using \eqref{bd:existence_1} in \eqref{En}, we also obtain the uniform equicontinuity of $E_{st}(n),$ in the sense that for any $\epsilon >0,$ there exists $\delta >0$ such that for every $n\geq 0:$
\begin{equation}
|t-s|\leq \delta \Rightarrow |E_{st}(n)|\leq \epsilon .                                                              \end{equation}
The same is true for the 2-parameter quantity $|u_t-u_s|_{W^{-1,2}},$ $(s,t)\in\Delta$. Indeed, by Proposition \ref{pro:apriori}-\eqref{estimate_remainder_3}, and since $E_{st}(n)$ is uniformy bounded, for all $n\geq 0$ we have for $|t-s|\leq L_0$ small enough:
\[
|u_t(n)-u_s(n)|_{W^{-1,2}}^{\frac1\alpha }\leq C_\alpha \left(E_{st}(n) + (t-s)\sup_{[s,t]}E_{\cdot }(n)\right)=:\omega (n;s,t)\,.
\]
Using the uniform equicontinuity for $E(n),$ we therefore obtain the same property for the family of controls $\omega (n;\cdot ,\cdot )$
and thereby for $u(n)$ in $W^{-1,2}$, which proves our assertion.

Now, from the Banach Alaoglu Theorem \eqref{bd:existence_1} and Ascoli, we obtain a limit point $u\in L^\infty (L^2)\cap L^2(W^{1,2})\cap C(W^{-1,2})$ such that for a subsequence $u(n_k),k\geq 0,$
\begin{align}
\label{cv:existence}
u(n_k)\to u\quad \text{weakly in}\enskip L^2(0,T;W^{1,2}),
\\
\label{cv:existence2}
u(n_k)\to u\quad \text{strongly in}\enskip C(0,T;W^{-1,2}),
\intertext{and interpolating:}
\label{cv:existence3}
u(n_k)\to u\quad\text{strongly in}\enskip L^2([0,T]\times \T^d),
\\
\label{cv:existence4}
u(n_k)\to u\quad \text{almost everywhere.}
\end{align}
Because of \eqref{bd:existence_1}, the drift term
\[
\phi\in W^{1,2} \mapsto -\iint_{[s,t]\times\T^d} a^{ij}(r,x,u_r(n_k)) \partial_j u_r(n_k)\partial_i \phi \dd x\dd r
\]
is uniformly bounded in $W^{-1,2}$, and so is $|u(n_k)_{st}^\natural|_{W^{-3,2}}$ by Proposition \ref{pro:apriori}.
Therefore, there exists an element of $\CC(0,T;W^{-3,2})$ denoted by $u^{\natural}$ so that up to another (but identically labelled) subsequence $u(n_k)$:
\begin{equation}
\label{cv:2}
\langle u^{\natural}(n_k)_{st},\phi \rangle
\to
\langle u^{\natural}_{st},\phi \rangle.
\end{equation} 
For simplicity, denote by $\B(n_k)=\B\{\X(n_k)\}$ the unbounded rough driver of Example \ref{example:URD}. For $k\geq 0$, we have for every $\phi \in W^{3,2}$ and $(s,t)\in\Delta$
\begin{multline}
\label{eq:nk}
\langle u(n_k)_{st},\phi \rangle
=-\iint_{[s,t]\times\T^d} a(r,x,u_r(n_k)) \partial_i u_r(n_k)\partial_j \phi \dd x\dd r
\\
+\int_{\T^d}u_s(n_k)(x)(B^{1,*}_{st}(n_k) + B^{2,*}_{st}(n_k))\phi (x)\dd x
+\langle u^\natural_{st}(n_k),\phi \rangle.
\end{multline}
Thanks to \eqref{cv:existence}, \eqref{cv:existence2}, \eqref{cv:2}, \eqref{cv:existence4} and dominated convergence, we can take the limit in each term of \eqref{eq:nk}, which leads to 
\[
\langle u_{st},\phi \rangle
=-\iint_{[s,t]\times \T^d}a^{ij}(r,x,u)\partial _ju\partial _j\phi \dd x\dd r
+\int_{\T^d}u_s(B_{st}^{1,*} +B_{st}^{2,*})\phi \dd x
+\langle u^{\natural}_{st},\phi \rangle.
\]
This shows that $u$ is a solution, thus proving existence and Theorem \ref{thm:existence}.
\hfill\qed

\section{Uniqueness for divergence-free vector fields: proof of Theorem \ref{thm:uniqueness}}
\label{sec:uniqueness}

The proof is inspired from an idea of \cite{hofmanova2017quasilinear}, in a stochastic context.
Consider two solutions $u^1$ and $u^2$ and let $v:=u^1-u^2.$
We cannot estimate the $L^1$-norm of $v$ directly because the map $x\mapsto|x|$ is singular at $x=0,$
however we can define an approximation of it as follows.

Let $1>a_1>a_2>\dots>a_n>\dots>0$ be a decreasing sequence of numbers such that
\begin{equation}
\label{sequence_an}
\int_{a_1}^1\frac{\dd \theta }{\theta }=1,\enskip 
\int_{a_2}^{a_1}\frac{\dd \theta }{\theta }=2,\enskip \dots\enskip ,
\int_{a_n}^{a_{n-1}}\frac{\dd \theta }{\theta }=n.
\end{equation} 
For $n\geq 1,$ we let $\varrho _n(\theta )$ be a continuous function supported in $(a_n,a_{n-1})$ and such that
\[
0\leq \varrho _n(\theta )\leq \frac{2}{n\theta },\quad 
\]
and integrating to one, i.e.\
\[
\int_{a_n}^{a_{n-1}}\varrho _n(\theta )\dd \theta =1.
\]
We then define 
\[
\beta _n(x):=\int_0^{|x|}\dd y\int_0^y\varrho _n(\theta )\dd \theta ,
\]
so that in particular $\beta\in C^2(\R)$ and has bounded first and second order derivatives. Moreover, we have the estimates
\begin{equation}
\label{derivees}
|\beta '_n(x)|\leq 1\quad \text{and}\quad 0\leq \beta _n''(x)\leq \frac{2}{n|x|}.
\end{equation}

Since $u^1$ and $u^2$ are solutions, it holds in the $L^2$-sense:
\begin{equation}
\label{eq:v}
\left\{
\begin{aligned}
&\dd v= \Big[\partial_i\big(a^{ij}(u^1)\nabla v\big) +\partial_i\big(a^{ij}(u^1)- a^{ij}(u^2))\partial_ju^2\big)\Big]\dd t + \dd \B v,\quad 
\\
&v_0=0,
\end{aligned}\right.
\end{equation}
where for simplicity in the notations we abbreviate the term $a(t,x,u^i)$ by $a(u^i),$ $i=1,2$ and $\B=\B\{\X\}$ is as in Example \ref{example:URD}.
Thanks to \eqref{coercivity}, the equation \eqref{eq:v} is strongly parabolic, i.e.\ of the form 
\[
\dd v=(\partial _i(a^{ij}(t,x)\partial _j v) +f)\dd t+\dd \B v
\]
with $(a^{ij})$ bounded above and below, and $f=\div\big((a(u^1)- a(u^2))\nabla u^2\big)\in L^2(W^{-1,2}).$
Hence, using the renormalization property (Theorem \ref{thm:renorm}) for $v,$
we have in the $L^1$-sense:
\begin{equation}
\label{eq:beta_n}
\left\{
\begin{aligned}
&\dd \beta _n(v)
= \beta _n'(v)\Big(\partial _i(a^{ij}(u^1)\partial _jv) +\partial _i\big[(a^{ij}(u^1)-a^{ij}(u^2))\partial _j u^2\big]\Big)\dd t + \dd \B \left(\beta _n(v)\right),
\\
&\beta (v)_0=0.
\end{aligned}\right.
\end{equation} 
Testing the above against $\phi =1,$ we obtain
\begin{multline*}
\Big(\int_{\mathbb T^d}\beta _n(v)\dd x\Big)_{st}
+\iint_{[s,t]\times\mathbb T^d} \beta _n''(v)\partial _iva^{ij}(u^1)\partial _jv\dd x\dd r
\\
=-\iint_{[s,t]\times\mathbb T^d}\beta _n''(v) \partial _i v(a^{ij}(u^1)-a^{ij}(u^2))\partial _j u^2\dd x\dd r
+\big\langle (B ^1_{st}+B ^2_{st})\beta _n(v),1\big\rangle 
+ \langle\beta _n(v)^{\natural}_{st},1\rangle.
\end{multline*}
Using the bound below for $a^{ij},$
as well as \eqref{derivees} and the fact that $a^{ij}$ is Lipshitz with respect to the third variable,
we get the estimate
\begin{multline}
\label{pre_gron}
\Big(\int_{\mathbb T^d}\beta _n(v)\dd x\Big)_{st}
+\lambda\iint_{[s,t]\times\T^d} \beta _n''(v)|\nabla v|^2\dd x\dd r
\\
\leq 
\frac{C}{n}\iint_{[s,t]\times\T^d} |\nabla v||\nabla u^2|\dd x\dd r
+C\rho_\alpha(\X)  (t-s)^\alpha |\beta _n(v_s)|_{L^1} + \langle\beta _n(v)^{\natural}_{st},1\rangle.
\end{multline}
Next, we show that the remainder $R_{st}:=\langle\beta _n(v)^{\natural}_{st},1\rangle$ is zero.
Indeed one can write, using Chen's relations \eqref{chen}:
\[\begin{aligned}
\delta R_{s\theta t}
&= \big\langle B^1_{\theta t}[\beta _n(v)_{s \theta }-B^1_{s\theta }\beta _n(v_s)],1\big\rangle 
+ \big\langle B^2_{\theta t}(\beta _n(v))_{s \theta },1 \big\rangle
\\
&= \big\langle \beta _n(v)_{s \theta }-B^1_{s\theta }\beta _n(v_s),B^{1,*}_{\theta t}1\big\rangle 
+ \big\langle \beta _n(v)_{s \theta },B^{2,*}_{\theta t}1 \big\rangle
\end{aligned}
\]
But $\div X= 0,$ hence:
\[B^{1,*}_{st}1=-X_{st}\cdot \nabla 1=0\,.\]
On the other hand, since $\X$ is geometric, then Lemma \ref{lem:weak_geo} asserts that $\div [\B]=0,$ where we recall that $[\B]$ is the bracket $2B^2_{st}-(B^1_{st})^2.$
Therefore,
\[
B_{st}^{2,*}1=\frac12\left (B_{st}^{1,*}\circ B_{st}^{1,*}1 +[\B]_{st}^*1\right )=0,
\]
which shows that $\delta R_{s\theta t}=0.$ Consequently, $R_{st}$ is an increment, and since it belongs to $\CC(0,T;\R),$ it follows by standard arguments that $R_{st}=0.$

Going back to \eqref{pre_gron}, this gives the estimate
\begin{multline*}
\Big(\int_{\mathbb T^d}\beta _n(v)\dd x\Big)_{st}
+\frac{\lambda}{2}\iint_{[s,t]\times\T^d} \beta _n''(v)|\nabla v|^2\dd x\dd r
\\
\leq 
\frac{C'}{n}\iint_{[s,t]\times\T^d} \big[|\nabla u^1|^2+|\nabla u^2|^2\big]\dd x\dd r
+C'\rho_\alpha(\X)  (t-s)^\alpha \|\beta _n(v)\|_{L^\infty(s,t;L^1)} \,.
\end{multline*}
The rough Gronwall lemma (Lemma \ref{lem:gronwall}) now implies
\[
\sup _{r\in[0,T]}\int_{\mathbb T^d}\beta _n(v_r)\dd x
\leq \frac{C}{n}\exp\left(\frac{\rho_\alpha(\X)  T}{C_{L,\alpha }}\right)
\int_{0}^T \big[|\nabla u^1|_{L^2}^2+|\nabla u^2|_{L^2}^2\big]\dd r\,.
\]
As $n\to\infty,$ the right hand side goes to $0,$ from which we infer by monotone convergence:
\[
\sup_{r\in [0,T]}\int_{\mathbb T^d}|v(x)|\dd x=0,
\]
yielding uniqueness. This proves Theorem \ref{thm:uniqueness}.
\hfill$\qed$

\section{Rough parabolic equations}
\label{sec:parabolic}

The purpose of this section is to investigate existence, uniqueness and $L^\infty$  estimates for a class of non-degenerate parabolic equations with affine-linear rough input, with the aim of
providing the lower bound estimate which is needed in the proof of Theorem \ref{thm:regular}, but also completing the results of \cite{hocquet2017energy,hocquet2018generalized,hocquet2018ito}.
The last subsection also introduces a notion of solution for backward problems, and gives the proof of a backward versus forward product formula. This will also provide a rigorous justification of the ``change of weight'' operated in the proof of the second uniqueness result, i.e.\ formula \eqref{product_v_m}. See Section \ref{sec:uniqueness_2}.\smallskip

In the sequel, we suppose that we are given some $(d+2)$-dimensional path $Y^i\in C^\alpha(0,T;W^{3,\infty}),$ $i=-1,0,\dots d,$ as well as an ``enhancement'' $\Y$ of it (see Assumption \ref{ass:solution_coef} below), and we assume as before $\alpha>1/3.$
We are interested in existence, uniqueness and boundedness results for the following parabolic ansatz with unknown $z_t(x)$
\begin{align}
\label{parabolic_gene}
&\dd z_t +\left(-\mathscr M_tz_t + f^0 - \partial _if^i\right)\dd t = \dd (\Y_t^i\partial _i  + \Y^0_t) z + \dd\Y^{-1}\,,
\quad \text{on}\enskip [0,T)\times \T^d\,,
\intertext{where the initial datum $z_0=z^0\in L^2(\T^d)$ is given and}
\label{general_M}
&\mathscr M_tz=\partial _i\left(a^{ij}(t,x)\partial _jz +F^i(t,x)z\right) -b^i(t,x)\partial _iz - c(t,x)z\,.
\end{align}
Integrated, the drift term is to be understood as the following distribution for each $\phi \in W^{1,2}(\mathbb T^d)$
\begin{equation}
\begin{aligned}
\left\langle \int_s^t(-\mathscr Mz +f^0-\partial _if^i)\dd r,\phi \right\rangle
&:=\iint_{[s,t]\times \T^d} \Big[a^{ij}\partial _iz \partial _j\phi +F^iz\partial _i\phi + b^i(\partial _iz)\phi
\\
&\quad \quad \quad \quad \quad 
+ cz\phi  +f^0\phi +f^i\partial _i\phi  \Big]\dd r\dd x\enskip.
\end{aligned}
\end{equation}

Clearly, further information than the values of the path $Y$ itself is needed in order to make sense of the equation.
However, due to the presence of the additional free term $Y^{-1}$ (in constrast with what was encountered in earlier sections), it turns out that the nature of the information needed to enhance $Y$ is slightly different from before.
To see this we can integrate \eqref{parabolic_gene} first when $t\mapsto Y_t(\cdot )$ has finite variation. This yields the (formal) Euler-Taylor expansion:
\[\begin{aligned}
z_{st}
&-\int_s^t(\mathscr Mz+\partial_if^i -f^0)\dd r
\\
&= (Y^i_{st}\partial _i + Y^0_{st})z_s + Y^{-1}_{st} 
\\
&\quad \quad \quad \quad 
+ \int_s^t(\dd Y^i_r\partial _i+Y^0_r)\Big[\int_s^r (\dd Y^j_{\tau }\partial _j+\dd Y^0_{\tau })z_\tau + Y^{-1}_{s,r}\Big] + o(t-s)
\\
&= (Y^i_{st}\partial _i + Y^0_{st})z_s + Y^{-1}_{st} 
\\
&\quad 
+ \Big(\frac12Y^i_{st}Y^j_{st}\partial _{ij} + \left [\bb{Y^i}{\partial _iY^j}_{st}+Y^0_{st}Y^j_{st}\right ]\partial _j + \bb{Y^i}{\partial _iY^0}_{st}
+\frac12(Y^0_{st})^2\Big)z_s 
\\
&\quad \quad \quad \quad 
+ \bb{Y^i}{\partial _iY^{-1}}_{st} + \bb{Y^0}{Y^{-1}}_{st}
+z^{\natural}_{st}
\end{aligned}
\]
where from now on, for two generic paths $A_t(x)$ and $B_t(x)$ of finite variation, we adopt the suggestive notation
\begin{equation}
\label{suggestive}
\bb{A}{B}_{st}:=\int_s^t\dd A_rB_{sr}\,,\quad (s,t)\in \Delta .
\end{equation} 
Hence, we end up with the expansion
\[
z_{st}-\int_s^t(\mathscr Mz + \partial_if^i -f^0)\dd r = Q^1_{st}(z_s) + Q^2_{st}(z_s) + z^{\natural}_{st}
\]
where for $z\in L^2(\T^d)$, we introduce
\begin{equation}
\label{Q_Y}
\left[
\begin{aligned}
&Q^1_{st}(z ):= (Y^i_{st}\partial _i + Y^0_{st})z  + Y^{-1}_{st} 
\\
&Q^2_{st}(z ):=
\Big(\frac12Y^i_{st}Y^j_{st}\partial _{ij} + \left [\bb{Y^i}{\partial _iY^j}_{st}+Y^0_{st}Y^j_{st}\right ]\partial _j + \bb{Y^i}{\partial _iY^0}_{st}
+\frac12(Y^0_{st})^2\Big)z 
\\
&\quad \quad \quad \quad  
\quad \quad \quad 
+ \bb{Y^i}{\partial _iY^{-1}}_{st} + \bb{Y^0}{Y^{-1}}_{st}\,.
\end{aligned}\right.
\end{equation}

This discussion naturally leads us to the following assumption on the coefficients.

\begin{assumption}[geometricity, full ansatz]
	\label{ass:solution_coef}
	We assume that $\Y$ is a enhancement of the coefficient path $Y,$ which takes the form of a triad
	\begin{equation}\label{triad_generic}
	\Y=\Big([Y^i]_{i=-1,0,\dots ,d}\,;\,[\bb{Y^j}{\partial _j Y^i}]_{i=0,\dots ,d}\,;\,\bb{Y^j}{\partial _j Y^{-1}}+\bb{Y^0}{Y^{-1}}\Big)
	\end{equation}
	in the space
	\[
	\mathscr C^\alpha :=C^\alpha (W^{3,\infty})^{d+2}\times C_2^{2\alpha }(W^{2,\infty})^{d+1}\times C_2^{2\alpha}(W^{1,\infty})\,,\]
	and such that
	\begin{enumerate}[label=(\arabic*)]
		\item The following Chen's-type relations hold: for every $(s,\theta,t)\in\Delta_2$:
		\begin{equation}\label{gene_chen}
		\left\{\begin{aligned}
		&\delta Y^i_{s\theta t}=0\,,\qquad \text{for}\enskip i=-1,\dots,d
		\\
		&\delta \bb{Y^j}{\partial _jY}_{s\theta t}= Y_{\theta t}^j\partial _jY_{s\theta },\qquad \text{for}\enskip  i=0,\dots d,
		\\
		&\delta \left(\bb{Y^j}{\partial _jY^{-1}}+\bb{Y^0}{Y^{-1}}\right)_{s\theta t}
		= Y^j_{s\theta}\partial _jY^{-1}_{\theta t} + Y^0_{\theta t}Y^{-1}_{s\theta }\,.
		\end{aligned}\right.
		\end{equation}
		\item $\Y$ is \emph{geometric}, in the sense that there is a sequence 
		\[\begin{aligned}
		\Y(n)
		&:=\Big([Y^i(n)]_{i=-1,0\dots d}\,;\,[\bb{Y^j(n)}{\partial _jY(n)}]_{i=0\dots d}\,;
		\\
		&\quad \quad \quad \quad 
		\quad \quad 
		\bb{Y^j(n)}{\partial _jY^{-1}(n)}+\bb{Y^0(n)}{Y^{-1}(n)}\Big)
		\end{aligned}
		\]
		of canonical lifts of paths $Y(n)$ of finite variation in $W^{3,\infty},$ such that the bracket terms correspond to the integrals \eqref{suggestive}, and with the property that $\Y(n)\to \Y$ for the natural product topology of $\mathscr C^\alpha .$
	\end{enumerate}
\end{assumption}

\begin{remark}
The notation $\bb{Y^j}{\partial_j Y^{-1}}+\bb{Y^0}{Y^{-1}}$ for the third ``symbol'' in \eqref{triad_generic} might appear somewhat odd, since it suggests that the two terms may be defined separately. This is neither true in general (as seen for instance in the enhancement \eqref{odd_enhancement}), nor needed in the analysis. However, for matters of readability we prefer to stick to this rather suggestive notation.
\end{remark}

Our main result in this subsection is the following.

\begin{theorem}
	\label{thm:parabolic_gene}
	Let $\mathscr M$ be as in \eqref{general_M} such that
	$a^{ij}$ satisfies \eqref{ass:parabolic} while
	\begin{equation}
	\label{hyp:coeff}
	F^i,b^i,f^i\in L^{2r}(0,T;L^{2q})\,,\quad 
	\enskip c,f^0\in L^r(0,T;L^q),
	\end{equation} 
	for some numbers $r,q\in[1,\infty]$ subject to the condition
	\begin{equation}
	\label{subject_r_q}
	\frac{1}{r} +\frac{d}{2q}\leq 1\,.
	\end{equation} 
	Let $\mathbf Y$ be as in Assumption \ref{ass:solution_coef}.
	The following holds true.
	
	\begin{itemize}
		\item The two parameter family of affine-linear differential operators $\Q=(Q^1,Q^2)$ defined in \eqref{Q_Y} satisfies the axioms of Definition \ref{def:Q}.
		
		\item
		There exists a unique $L^2$-solution to \eqref{parabolic_gene}, understood in the sense of Definition \ref{def:solution} with $\Q$ as above.
		Moreover, the solution map
		\[\begin{aligned}
		\left(L^{2r}(0,T;L^{2q})\right)^3\times \left(L^r(0,T;L^q)\right)^2
		&\longrightarrow L^\infty(0,T;L^2)\cap L^2(0,T;W^{1,2})
		\\
		(F^i,b^i,f^i;c,f)
		&\longmapsto u
		\end{aligned}
		\]
		is continuous with respect to the strong topologies.
	\end{itemize}
\end{theorem}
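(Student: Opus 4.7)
\medskip

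\noindent\textbf{Proof plan.} The statement is really two claims and a continuity bonus. I would split the argument into three stages.

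\medskip

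\noindent\emph{Step 1: verifying that $\Q$ is an affine-linear unbounded rough driver.}
The explicit formulas \eqref{Q_Y} make the Lipschitz bounds $|Q^i_{st}|_{\mathrm{Lip}(W^{k,p},W^{k-i,p})}\le [\Q]_\alpha(t-s)^{i\alpha}$ a direct consequence of the regularity requirement $\Y\in\mathscr C^\alpha$ in Assumption \ref{ass:solution_coef} combined with the product/chain rule on Sobolev scales. For the Chen's relations \eqref{chen}, a direct expansion of $(Q^1-Q^1(0))_{\theta t}\circ Q^1_{s\theta}$ produces exactly the four ``cross'' terms $Y^j_{\theta t}\partial_jY^i_{s\theta}$, $Y^j_{\theta t}\partial_jY^{-1}_{s\theta}$, $Y^0_{\theta t}Y^i_{s\theta}\partial_i$ and $Y^0_{\theta t}Y^{-1}_{s\theta}$, which match the right hand sides in \eqref{gene_chen} assumed for $\Y$. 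The ``trivial'' first relation $\delta Q^1=0$ is immediate from $\delta Y^i=\delta Y^0=\delta Y^{-1}=0$.

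\medskip

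\noindent\emph{Step 2: existence via regularisation.}
Use the geometricity of $\Y$ to pick smooth approximants $\Y(n)$ converging in $\mathscr C^\alpha$, and let $z(n)$ be the classical solution of \eqref{parabolic_gene} with $Y(n)$ (available by \cite{ladyzhenskaya1968linear} since at fixed $n$ the rough part reduces to a smooth, time-dependent first-order perturbation). I would derive uniform bounds in the energy class $L^\infty(L^2)\cap L^2(W^{1,2})$ by estimating the incremental energy $|z(n)_t|_{L^2}^2-|z(n)_s|_{L^2}^2$ in two blocks. The first, deterministic, block is the standard parabolic energy estimate: after integrating by parts, the contributions of $F^i$, $b^i$, $c$, $f^i$, $f^0$ are bounded via H\"older in time together with a Gagliardo-Nirenberg inequality $\|z\|_{L^{2q'}}\lesssim \|z\|_{L^2}^{1-d/(2q)}\|\nabla z\|_{L^2}^{d/(2q)}$; Young's inequality then absorbs them in the coercive $\lambda\int|\nabla z|^2$, and this is the precise point where the LPS threshold \eqref{subject_r_q} enters. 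The second, rough, block splits as $\langle z,Q^1_{st}z + Q^2_{st}z\rangle +\langle z,Y^{-1}_{st}+\text{affine}\rangle + z^\natural_{st}$, where the sewing/remainder bound of Proposition \ref{pro:apriori} (applied with the \emph{affine} driver $\Q$) controls $z^\natural$ by $\|z\|_{L^\infty(s,t;L^2)}$ plus the $L^1(W^{-2,2})$-norm of the drift, uniformly in $n$. Applying the rough Gronwall Lemma \ref{lem:gronwall} yields uniform bounds on $\|z(n)\|_{L^\infty(L^2)}+\|\nabla z(n)\|_{L^2(L^2)}$. Equicontinuity in $W^{-1,2}$ follows from \eqref{estimate_remainder_3}, and a compactness argument (Banach--Alaoglu + Ascoli + Aubin--Lions) extracts a subsequence converging weakly in $L^2(W^{1,2})$ and strongly in $C(W^{-1,2})$; passing to the limit term by term in the Euler--Taylor expansion \eqref{euler-taylor_Q}, using the convergence $\rho_\alpha(\Y(n),\Y)\to 0$ in $\mathscr C^\alpha$, produces a solution of \eqref{parabolic_gene} in the sense of Definition \ref{def:solution}.

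\medskip

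\noindent\emph{Step 3: uniqueness and continuity.}
Since the equation is affine in $z$, the difference $w$ of two solutions solves the linear homogeneous problem obtained from \eqref{parabolic_gene} by setting $Y^{-1}\equiv 0$, $f^0=f^i=0$ and $w_0=0$. The same energy-plus-rough-Gronwall argument of Step 2 then forces $\|w\|_{L^\infty(L^2)}=0$. The continuity of the solution map is an immediate corollary: by linearity, the difference of the two solutions associated to two sets of coefficients satisfies a similar equation whose drift and additive datum depend linearly (and continuously in the prescribed norms) on the coefficient difference, so the same a priori estimate upgrades to a Lipschitz bound.

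\medskip

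\noindent\emph{Main obstacle.} The delicate point is the coexistence of the critical LPS integrability on $(F^i,b^i,c)$ and the affine rough forcing. The standard Gagliardo--Nirenberg absorption is tight, so one must make sure that the rough increments $\langle z,Q^1_{st}z+Q^2_{st}z\rangle$ do not spoil the balance: this forces an incremental (rather than pointwise in time) application of the energy estimate, which is exactly what the rough Gronwall Lemma is designed for, but it has to be combined with the affine remainder bound of Proposition \ref{pro:apriori} whose non-trivial ingredient is the composition identity $\delta Q^2=(Q^1-Q^1(0))\circ Q^1$ and which must swallow the additive $Y^{-1}$-contribution as part of $\|F\|_{L^1(W^{-2,p})}$.
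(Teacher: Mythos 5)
Your overall architecture—smooth approximation, uniform energy estimates via rough Gronwall, compactness, then uniqueness by linearity—is the correct one and matches the paper's strategy. However, there is a genuine gap in the way you propose to carry out the central energy estimate, and it is the point on which the whole argument hinges.

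You estimate the energy increment by plugging the Euler--Taylor expansion for $z$ into $|z_t|_{L^2}^2-|z_s|_{L^2}^2$ and treating the resulting ``rough block'' $\langle z,Q^1_{st}z+Q^2_{st}z\rangle+\dots+z^{\natural}_{st}$ together with the remainder bound of Proposition \ref{pro:apriori} applied to the equation on $z$. This cannot be made rigorous: that proposition produces a remainder $z^{\natural}_{st}\in W^{-3,2}$, which must then be paired with $z_s\in L^2$ (a pairing that is ill-defined), and the quadratic correction $|z_{st}|_{L^2}^2$ is not controlled either. The missing ingredient is the product formula of Proposition \ref{pro:product}: the paper first derives a \emph{self-contained} Euler--Taylor expansion for the nonlinear quantity $z^2$, namely \eqref{ito_square_free}, whose driver is pure transport plus zero-order multiplicative terms (no remaining additive $\Y^{-1}$), and whose remainder $z^{2,\natural}$ sits in $W^{-3,1}$. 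Only then can one test against $\phi=1\in W^{3,\infty}$; the remainder estimate for $z^{2,\natural}$ in \eqref{remainder_z2} is the one that couples with the rough Gronwall lemma. Your version, which works with $z^{\natural}$ rather than $z^{2,\natural}$, bypasses the essential regularity mismatch and does not close.

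A secondary point: you propose to ``swallow the additive $Y^{-1}$-contribution as part of $\|F\|_{L^1(W^{-2,p})}$''. This is not possible: $Y^{-1}$ is only $\alpha$-H\"older in time, so it is not a drift; it must remain inside the affine driver $\Q$ and is handled precisely by the affine remainder estimate, not by the drift term. After the product formula is applied the affine part disappears from the driver of $z^2$ (it contributes the $2z\,\dd\Y^{-1}$ term, which is again absorbed through the product structure), which is what makes the scheme consistent.

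Finally, on continuity: the map $(F^i,b^i,c)\mapsto u$ is not linear (these coefficients sit inside $\mathscr M$), so the difference of two solutions with different coefficients carries a cross term, e.g.\ $(F_1-F_2)u_2$, rather than a linear dependence on the coefficient difference alone. The argument still works, but this should be stated as a bilinear, not linear, perturbation. Otherwise Steps~1 and~3 are sound, and the compactness scheme in Step~2 is the right one once the product formula is inserted where indicated.
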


We recall the following classical interpolation inequality (see \cite{ladyzhenskaya1968linear}).

\begin{proposition}
	\label{pro:interp}
	For every $\varrho, \sigma $ with the properties
	\begin{equation}\label{conditions}
	\frac1\varrho +\frac{d}{2\sigma }\geq \frac d4\quad\text{and}\quad \left[\begin{aligned}
	&\varrho \in[2,\infty]\,, \quad \sigma \in[2,\tfrac{2d}{d-2}]\quad\text{for}\enskip d>2
	\\
	&\varrho \in(2,\infty]\,,\quad \sigma \in[2,\infty)\quad \text{for}\enskip d=2
	\\
	&\varrho \in[4,\infty]\,,\quad \sigma \in[2,\infty]\quad \text{for}\enskip d=1\,.
	\end{aligned}\right.
	\end{equation}
	there is a constant $C_{\varrho ,\sigma }>0 $ such that for any $f\in L^\infty(0,T;L^2)\cap L^2(0,T;W^{1,2})$:
	\begin{equation}
	\label{interpolation_inequality}
	\|f\|_{L^\varrho (0,T;L^\sigma )}\leq C_{\varrho ,\sigma } \left(\|\nabla f\|_{L^2(0,T;L^2)}+\|f_r\|_{L^\infty(0,T;L^2)}\right)\,.
	\end{equation}
\end{proposition}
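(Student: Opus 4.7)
\smallskip

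\noindent\textbf{Proof proposal for Proposition \ref{pro:interp}.} The plan is to combine a spatial Gagliardo--Nirenberg inequality applied at each time with a H\"older step in the time variable; hypothesis \eqref{conditions} will turn out to be exactly the scaling constraint that makes the time integration legitimate.

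\smallskip

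\noindent\emph{Step 1 (spatial interpolation).} For each of the three dimensional cases listed in \eqref{conditions}, the Gagliardo--Nirenberg inequality on $\T^d$ supplies an exponent $\theta=d\bigl(\tfrac12-\tfrac1\sigma\bigr)\in[0,1]$ and a constant $C$ such that, for every $g\in W^{1,2}(\T^d)$,
\[
\|g\|_{L^\sigma(\T^d)}\;\leq\;C\bigl(\|\nabla g\|_{L^2}^{\theta}\,\|g\|_{L^2}^{1-\theta}\;+\;\|g\|_{L^2}\bigr),
\]
the extra $\|g\|_{L^2}$ term compensating for the fact that functions on the torus need not have vanishing mean. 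Applying this pointwise in time to $g=f(r,\cdot)$ and raising to the power~$\varrho$ gives a pointwise-in-time bound on $\|f(r)\|_{L^\sigma}^{\varrho}$.

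\smallskip

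\noindent\emph{Step 2 (time integration).} Integrating over $r\in[0,T]$, using $\|f(r)\|_{L^2}\leq \|f\|_{L^\infty(0,T;L^2)}$ to extract the $(1-\theta)$-factor, and applying H\"older's inequality to the remaining factor $\|\nabla f(r)\|_{L^2}^{\varrho\theta}$ with the conjugate pair $(2/(\varrho\theta),\,2/(2-\varrho\theta))$, one obtains
\[
\int_0^T\|f(r)\|_{L^\sigma}^{\varrho}\,\dd r\;\leq\;C\,T^{1-\varrho\theta/2}\,\|f\|_{L^\infty(L^2)}^{\varrho(1-\theta)}\,\|\nabla f\|_{L^2(L^2)}^{\varrho\theta}\;+\;C\,T\,\|f\|_{L^\infty(L^2)}^{\varrho}.
\]
The H\"older step requires $\varrho\theta\leq 2$, and a direct computation shows
\[
\varrho\theta\leq 2\quad\Longleftrightarrow\quad \frac{1}{\varrho}+\frac{d}{2\sigma}\geq\frac{d}{4},
\]
which is precisely hypothesis \eqref{conditions}.

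\smallskip

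\noindent\emph{Step 3 (conclusion).} Taking the $\varrho$-th root and invoking Young's inequality in the elementary form $a^{1-\theta}b^{\theta}\leq a+b$ yields \eqref{interpolation_inequality}, after absorbing all $T$-dependent factors into the constant $C_{\varrho,\sigma}$.

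\smallskip

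\noindent\emph{Main obstacle.} The delicate point is verifying that the Gagliardo--Nirenberg inequality of Step~1 is indeed valid throughout the ranges prescribed in \eqref{conditions}. The exclusions --- $\sigma\leq 2d/(d-2)$ for $d\geq3$, $\sigma<\infty$ (and hence $\varrho>2$) for $d=2$, and the one-dimensional range --- correspond exactly to those endpoints at which the classical Sobolev embedding or the associated interpolation between $L^2$ and the critical Lebesgue space degenerates; away from these boundary points the inequality is standard and can be obtained by interpolating between $L^2(\T^d)$ and the Sobolev embedding space.
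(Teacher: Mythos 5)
Your argument is correct: the time-slice Gagliardo--Nirenberg bound with $\theta=d(\tfrac12-\tfrac1\sigma)$, the H\"older step in time under $\varrho\theta\le 2$, and the verification that $\varrho\theta\le 2$ is exactly the condition $\tfrac1\varrho+\tfrac{d}{2\sigma}\ge\tfrac d4$ all check out (the case $\varrho=\infty$, forcing $\sigma=2$, is trivially the $L^\infty(L^2)$ bound). The paper itself gives no proof of Proposition \ref{pro:interp} but simply cites \cite{ladyzhenskaya1968linear}, and your proof is precisely the classical argument behind that citation, so there is nothing to add beyond noting that the constant you produce also depends on $T$, which is harmless since $T$ is fixed throughout the paper.
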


As an easy consequence of this inequality, we see that if $r,q\in[1,\infty]$ are numbers satisfying the condition \eqref{subject_r_q},
then it holds the estimate:
\begin{equation}
\label{interp_u}
\|u\|_{L^{\frac{2r}{r-1}}(L^{\frac{2q}{q-1}})}\leq C_{r,q} \left(\|\nabla f\|_{L^2(0,T;L^2)}+\|f_r\|_{L^\infty(0,T;L^2)}\right)\,.
\end{equation}

\subsection{Solvability}

First, observe that if $z$ is an $L^2$-solution of \eqref{parabolic_gene}, then by the product formula (Proposition \ref{pro:product})
\begin{equation}
\label{ito_square_free}
\dd z ^2 -2z \mathscr Mz\dd t=  2zf\dd t + (\dd \Y^i\partial _i+ 2\dd \Y^0)(z^2) + 2z\dd \Y^{-1} ,
\end{equation} 
in the $L^1$-sense.
Our aim now is to test against $\phi =1,$ and then apply the rough Gronwall estimate, Lemma \ref{lem:gronwall}. For that purpose, we first need to estimate the $1$-variation norm in the space $W^{-1,1}$ of the drift in \eqref{ito_square_free}, which for every $\phi\in W^{1,\infty}$ is given by
\[\begin{aligned}
\langle\mathscr D_{st},\phi \rangle
&=\int_s^t\langle -2z \mathscr Mz,\phi \rangle\dd r
\\
&:=2\iint_{[s,t]\times \T^d} \Big[a^{ij}\partial _iz(\partial _jz\phi  + z\partial _j\phi ) +F^iz(\partial _iz\phi + z\partial _i\phi ) 
\\
&\quad \quad \quad \quad \quad 
+ b^i(\partial _iz)z\phi + cz^2\phi  +f^0z\phi +f^i(\partial _iz\phi+ z\partial _i\phi ) \Big]\dd r\dd x\,.
\end{aligned}
\]
But immediate computations using H\"older Inequality yield:
\begin{equation}
\label{bd:reduced_drift}
\begin{aligned}
\frac12\Big| \mathscr D_{st}\Big|_{W^{-1,1}}
&\leq \lambda^{-1}(\|\nabla z \|^2_{2,2}+\|z \nabla z \|_{1,1})
+\|F^i,b\|_{2r,2q}\|\nabla z \|_{2,2}\|z \|_{\frac{2r}{r-1} ,\frac{2q}{q-1}}
\\
&\quad \quad 
+\|F^i,c\|_{r,q}\|z\|_{\frac{2r}{r-1} ,\frac{2q}{q-1}}^2 
+\|f^0,f^i\|_{2,2}(\|z\|_{2,2} + \|\nabla z\|_{2,2})
\\
&=:\omega _{\mathscr D}(s,t)\,,
\end{aligned}
\end{equation}
where for convenience, we shall from now on use the shorthand notation
\begin{equation}
\label{shorthand_notation}
\|\cdot \|_{a,b}:=\|\cdot \|_{L^a(s,t;L^b))}\,.
\end{equation}
Noticing that $\omega _{\mathscr D}(s,t)$ defined above is a control, one can apply Proposition \ref{pro:apriori} to obtain
\begin{equation}
\label{remainder_z2}
|z ^{2,\natural}_{st}|_{W^{-3,1}}
\leq C\left((t-s)^\alpha  \omega _{\mathscr D}(s,t)  + \|z \|^2_{L^\infty(s,t;L^2)}(t-s)^{3\alpha }\right).
\end{equation}
for every $(s,t)\in\Delta $ with $(t-s)\leq L$ for some absolute constant $L>0.$
We can now proceed to the proof of uniqueness.

\begin{trivlist}
	\item[\indent\textit{Energy inequality and application to uniqueness.}]
	One can now take $\phi =1\in W^{3,\infty}$ in \eqref{ito_square_free}. Putting the negative term on the left, using Assumption \ref{ass:A}, 
	and then making use of Young Inequality,
	we get for any $\epsilon >0$
	\begin{multline*}
	E_{st}
	:=
	(|z |_{L^2}^2)_{st} + \int_s^t|\nabla z_r|_{L^2}^2\dd r
	\\ 
	\lesssim_{\lambda}
	\left(\|F^i,b\|_{2r,2q}+ \|c\|_{r,q}\right)(\|z\|^2_{\frac{2r}{r-1} ,\frac{2q}{q-1}} + \|\nabla z\|^2_{2,2})
	+ \frac{1}{2\epsilon }  \|f^0,f^i\|_{2,2}^2 + \frac\epsilon 2(\|z\|^2_{2,2} + \|\nabla z\|_{2,2}^2)
	\\
	+ \int_{\T^d}\big(z ^2_s(B^{1,*}_{st}+B^{2,*}_{st})1+2z _s(Y^{-1}_{st}+\bb{Z^j}{\partial _jZ^{-1}}_{st})\big)\dd x  + \langle z ^{2,\natural}_{st},1\rangle\enskip.
	\end{multline*}
	Taking $\epsilon $ small enough and using the Interpolation Inequality gives then
	\begin{equation}
	\begin{aligned}
	E_{st}
	&\lesssim_{\lambda}
	\left(\|F^i,b\|_{2r,2q}+ \|c\|_{r,q}\right)\sup_{r\in[s,t]}E_r
	+|z _s|^2_{L^2}((t-s)^\alpha +(t-s)^{2\alpha })
	+|z ^{2,\natural}_{st}|_{W^{-3,1}},
	\end{aligned}
	\end{equation} 
	where we have estimated the remainder term by its $W^{-3,1}$-norm.
	
	Using the remainder estimate \eqref{remainder_z2}, we see that for $|t-s|\leq L(\lambda )$ (small enough):
	\[\begin{aligned}
	E_{st}
	\lesssim_{\lambda}
	\left(\|F^i,b\|_{2r,2q}+ \|c\|_{r,q} + (t-s)^\alpha\right)\sup_{r\in[s,t]}E_r + \|f^0,f^i\|_{2,2}^2\enskip.
	\end{aligned}
	\]
	By the Rough Gronwall estimate, Lemma \ref{lem:gronwall}, we infer the bound
	\begin{multline}
	\label{estimate:f}
	\|z \|^2_{L^\infty(0,T;L^2)}+\|\nabla z \|^2_{L^2(0,T;L^2)}
	\\
	\leq C\left(\lambda,\rho_\alpha(\Y),\|F^i,b\|_{L^{2r}(L^{2q})},\|c\|_{L^r(L^q)}\right)\Big(\exp \left\{\frac{T}{C _{\alpha,r,q,L}}\right\}\|z_0\|_{L^2}^2
	+\|f^0,f^i\|_{L^2(0,T;L^2)}^2
	\Big)
	.
	\end{multline}
	Next, note that the difference of two solutions of the same equation is itself a solution of a similar problem, with $f^0=f^i=0$ and $z_0=0.$
	Hence, the uniqueness follows from \eqref{estimate:f}.
\end{trivlist}

Existence relies on the same compactness argument as in the proof of Theorem \ref{thm:existence}, using a sequence
$\Y(n) \to \Y$ as in Assumption \ref{ass:geometric}, and the uniform estimate \eqref{estimate:f}.
We thus leave the details to the reader.

Similarly, the continuity statement is identical to that of \cite[Theorem 2.1]{hocquet2018ito}, and thus the proof is omitted.
\hfill\qed

%


\subsection{The main $L^\infty([0,T]\times\T^d)$ estimate}
We now make the assumption that $Y^{-1}=Y^0=0$ and prove a boundedness result for the following ansatz with (pure transport) rough input
\begin{align}
\label{parabolic_special}
\dd z_t +\left(-\mathscr M_tz_t + f^0 - \partial _if^i\right)\dd t = \dd \Y_t^i\partial _i z\,,\quad \text{on}\enskip [0,T)\times \T^d\,,
\\
z_0=0\,,
\nonumber
\intertext{where for the reader's convenience we recall that}
\mathscr M_tz=\partial _i\left(a^{ij}(t,x)\partial _jz +F^i(t,x)z\right) -b^i(t,x)\partial _iz - c(t,x)z\,.
\nonumber
\end{align}

Our main result is the following.

\begin{theorem}
	\label{thm:boundedness}
	Let $\Y,\mathscr M,a^{ij},\lambda ,F^i,b^i,c,f^0,f^i$ be as in Theorem \ref{thm:parabolic_gene}, but this time assume that $Y^{-1}=Y^0=0,$
	and that the inequality \eqref{subject_r_q} imposed on the pair $(r,q)$ is strict, namely
	\begin{equation}
	\label{conditions_strict}
	\frac{1}{r} + \frac{d}{2q} <1\,,
	\end{equation} 
	Suppose in addition that $f^0\in L^r(0,T;L^q),$ while $f^i\in L^{2r}(0,T;L^{2q}),i=1\dots d$, and let $z_0=0.$
	
	There is a constant
	\[
	C=C\left(T,\rho_\alpha (\Y),\lambda,\|F^i,b^i,f^i\|_{L^{2r}(L^{2q})},\|c,f^0\|_{L^r(L^q)},r,q\right)>0\,.
	\]
	which is non-increasing in $T>0$ and such that the following $L^\infty$-estimate holds:
	\begin{equation}
	\label{L_infty_rho}
	\|z \|_{L^\infty([0,T]\times\T^d)}\leq C\left(\|f^i,f^0\|_{L^1(0,T;L^1)}^{1/2}+\|z\|_{L^{\frac{2r}{r-1}}(0,T;L^{\frac{2q}{q-1}})}\right)\,.
	\end{equation}
\end{theorem}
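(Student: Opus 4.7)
The plan is to establish \eqref{L_infty_rho} through a rough Moser iteration argument, generalizing the scheme introduced in \cite{hocquet2018ito} to the setting where the Ladyzhenskaya--Prodi--Serrin-type condition \eqref{conditions_strict} replaces the purely $L^\infty$-in-time assumption on the coefficients. The whole scheme hinges on the fact that, under the assumption $Y^{-1} = Y^0 = 0$, the rough input remains of pure transport type after renormalization, so that testing against $\phi = 1$ produces no uncontrollable zero-order contribution from the rough bracket.

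For $p \geq 2$, I would apply the renormalization property (Theorem \ref{thm:renorm}) with $\beta(z) = |z|^p$, handled rigorously through a truncation $\beta_n \in C^2_b$ along with a bootstrap that lifts $z$ from $L^2$ (guaranteed by \eqref{estimate:f}) to $L^p$ in finitely many steps. The renormalized equation then reads
\begin{equation*}
\dd |z|^p + \bigl[p(p-1)|z|^{p-2} a^{ij}\partial_i z\,\partial_j z - p\beta'(z)(\mathscr{M} z - f^0 + \partial_i f^i)\bigr]\dd t = \dd \Y^i\partial_i|z|^p\,.
\end{equation*}
Testing against $\phi = 1$ and rewriting $p(p-1)|z|^{p-2}|\nabla z|^2 = \tfrac{4(p-1)}{p}|\nabla|z|^{p/2}|^2$ yields an $L^p$-energy identity. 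The rough remainder is controlled by Proposition \ref{pro:apriori} applied to $v = |z|^p$, while the bracket pairing $\langle|z|^p,(B^{1,*}+B^{2,*})_{st}1\rangle$ contributes a zero-order term of order $(t-s)^\alpha p\,\rho_\alpha(\Y)\,\|z\|^p_{L^\infty(s,t;L^p)}$ after integrating $\partial_i Y^i$ by parts using $Y \in W^{3,\infty}$. Applying H\"older's inequality to the drift terms, the interpolation inequality \eqref{interp_u} to $f = |z|^{p/2}$, absorbing the gradient term to the left, and invoking the Rough Gronwall Lemma (Lemma \ref{lem:gronwall}) should produce a closed bound
\begin{equation*}
\|z\|^p_{L^\infty(L^p)} + \|\nabla|z|^{p/2}\|^2_{L^2(L^2)} \leq Cp^\gamma\bigl(\|z\|^p_{L^{2r/(r-1)}(L^{2q/(q-1)})} + \|f^0,f^i\|^{p/2}_{L^1(L^1)}\bigr)
\end{equation*}
with constants $C,\gamma$ independent of $p$; here the strictness of \eqref{conditions_strict} is critical in reserving slack in the H\"older exponents.

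Combining the above with a further application of \eqref{interp_u} to $|z|^{p/2}$ produces a genuine gain of integrability: $\|z\|_{L^{\kappa p}(L^{\sigma p})} \leq (Cp^\gamma)^{1/p}\,\mathcal{R}$, where $\mathcal{R}$ denotes the right-hand side of \eqref{L_infty_rho} and $\kappa\sigma > 1$ is precisely what \eqref{conditions_strict} provides. Iterating with $p_{k+1} = \kappa\sigma\, p_k$ starting from $p_0 = 2$, the prefactor $\prod_{k\geq 0}(Cp_k^\gamma)^{1/p_k}$ converges by the ratio test, and taking $k \to \infty$ delivers the desired $L^\infty$-bound. The main technical obstacle is the careful tracking of the $p$-dependence in the rough remainder estimate: one must verify that the operator norms entering Proposition \ref{pro:apriori} when specialized to $v = |z|^p$ contribute at most polynomially in $p$ to the constant of the $L^p$-energy estimate, as otherwise the Moser recursion does not close. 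A secondary subtlety is performing the initial integrability bootstrap within the rough framework (rather than by semigroup methods), which I would address via successive applications of the $L^q$-solvability provided by (a variant of) Theorem \ref{thm:parabolic_gene} at higher integrability.
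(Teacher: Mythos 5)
Your proposal follows essentially the same route as the paper's proof: renormalize (equivalently, iterate the product formula) to obtain the equation for $|z|^\varkappa$, substitute $v=|z|^{\varkappa/2}$, test against $\phi=1$, control the remainder via Proposition \ref{pro:apriori}, interpolate, absorb the gradient, apply Rough Gronwall to get a closed $L^2$-estimate on $v$ with constant growing polynomially in $\varkappa$, then run the Moser recursion and pass $n\to\infty$ (the paper packages the convergent infinite product as Lemma \ref{lem:recursion}). Two minor bookkeeping slips in your intermediate display --- the exponent on $\|f^0,f^i\|_{L^1(L^1)}$ should be $1$ rather than $p/2$, and the Lebesgue exponents $\tfrac{2r}{r-1},\tfrac{2q}{q-1}$ must scale with $p$ once $p>2$ --- do not affect the logic, and your emphasis on tracking the $p$-dependence of the constants is exactly the point that makes the recursion close.
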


Prior to addressing the proof of this result,
let us recall the following non-linear inequality, whose proof is standard and can be found in the literature, see e.g.\ \cite{ladyzhenskaya1968linear}.

\begin{lemma}
	\label{lem:recursion}
	Consider a sequence of positive numbers $U_n,n\geq 0,$ and
	constants $\beta >1$ and $C ,\tau >0$ 
	such that for all $n\geq 1:$
	\begin{equation}
	\label{hyp:recursive}
	U _{n+1} \leq C \tau ^nU_n^\beta .
	\end{equation} 
	Then, the following estimate holds: for any $n\geq 0$ we have
	\begin{equation}
	\label{estim:recursive}
	U_n\leq
	C^{\frac{\beta ^n-1}{\beta -1}} \tau ^{\frac{\beta ^n-1}{(\beta -1)^2}-\frac{n}{\beta -1}}U _0^{\beta ^n}.
	\end{equation} 
\end{lemma}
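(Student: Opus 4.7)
The plan is to proceed by a straightforward induction on $n \geq 0$. The base case $n = 0$ is immediate since both exponents $(\beta^n-1)/(\beta-1)$ and $(\beta^n-1)/(\beta-1)^2 - n/(\beta-1)$ vanish at $n=0$, so the right-hand side of \eqref{estim:recursive} collapses to $U_0$. For the inductive step, assume \eqref{estim:recursive} holds at rank $n$, set $p_n := (\beta^n - 1)/(\beta-1)$ and $q_n := (\beta^n - 1)/(\beta-1)^2 - n/(\beta-1)$, and apply the recursive hypothesis \eqref{hyp:recursive} to obtain
\begin{equation*}
U_{n+1} \;\leq\; C\tau^n U_n^\beta \;\leq\; C^{1+\beta p_n}\,\tau^{\,n+\beta q_n}\,U_0^{\beta^{n+1}}.
\end{equation*}
It then remains to verify the two algebraic identities $1 + \beta p_n = p_{n+1}$ and $n + \beta q_n = q_{n+1}$, which are elementary consequences of $\beta\cdot\beta^n = \beta^{n+1}$ together with $\beta/(\beta-1) = 1 + 1/(\beta-1)$.

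To discover the correct exponents (or as an alternative derivation), I would first linearize: setting $V_n := \log U_n$ converts \eqref{hyp:recursive} into the affine inequality $V_{n+1} \leq \beta V_n + \log C + n\log\tau$. Seeking a particular solution of the form $A + Bn$ forces $B = \log\tau/(\beta-1)$ and $A = \log C/(\beta-1) + \log\tau/(\beta-1)^2$, and the shifted sequence $W_n := V_n + A + Bn$ satisfies the homogeneous inequality $W_{n+1} \leq \beta W_n$. Iterating this gives $W_n \leq \beta^n W_0$, and exponentiating back yields exactly \eqref{estim:recursive}.

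There is no genuine obstacle here, since this is a standard Moser-type combinatorial lemma and the computation is purely algebraic; the only mild care required is in tracking signs when rewriting $n + \beta q_n$ as $q_{n+1}$, where the cancellation $n(1 - \beta/(\beta-1)) = -n/(\beta-1)$ must be performed correctly. Looking ahead to the application in Theorem \ref{thm:boundedness}, the interesting structural feature to keep in mind is the negative contribution $-n/(\beta-1)$ in the exponent of $\tau$: when $\tau > 1$ this produces a decay factor that, combined with $U_0^{\beta^n} \to 0$ for sufficiently small $U_0$, will allow us to pass to the limit $n \to \infty$ and obtain a finite $L^\infty$ bound from the sequence of $L^p$ bounds constructed by Moser iteration.
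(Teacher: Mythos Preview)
Your induction argument is correct and complete; the algebraic identities $1 + \beta p_n = p_{n+1}$ and $n + \beta q_n = q_{n+1}$ check out exactly as you indicate. The paper itself does not supply a proof but simply refers the reader to \cite{ladyzhenskaya1968linear}, so your write-up in fact provides more detail than the paper does. One minor remark: the hypothesis \eqref{hyp:recursive} is stated in the lemma for $n \geq 1$, but your inductive step from $n=0$ to $n=1$ requires it at $n=0$ (and indeed the application in the proof of Theorem~\ref{thm:boundedness} uses it for $n \geq 0$); this is almost certainly a typo in the statement rather than a flaw in your argument.
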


Following Moser's iteration pattern, we aim to apply Lemma \ref{lem:recursion}, for a well-chosen sequence $U_n$.

\begin{proof}[Proof of Theorem \ref{thm:boundedness}]
	Let $z $ be a solution, and assume without loss of generality that it is bounded (the general case is then deduced by approximation).
	For $\varkappa \geq 2,$ the product formula, Proposition \ref{pro:product}, asserts that for every $\phi \in W^{2,\infty}$:
	\begin{equation}
	\label{ito:real_power}
	|z |^\varkappa_{st} + \langle\mathscr D_{st}^{\varkappa},\phi \rangle = \int_{\T^d}|z _s|^\varkappa (B^{1,*} _{st} + B^{2,*}_{st})\phi \dd x
	+\langle z^{\varkappa,\natural}_{st},\phi \rangle
	\end{equation}
	where $z^{\varkappa,\natural}\in \CC(0,T;W^{-3,1}),$
	and similarly as before we denote by
	\[
	\begin{aligned}
	\langle\mathscr D_{st}^{\varkappa},\phi \rangle 
	&:= -\int_s^t\langle \varkappa z|z|^{\varkappa-2}\mathscr Mz,\phi \rangle\dd r
	\\
	&=\iint_{[s,t]\times \T^d} \Big[\varkappa (\varkappa-1)|z|^{\varkappa -2}\partial _iza^{ij}\partial _jz\phi  
	+ \varkappa z|z|^{\varkappa-2}\partial _jza^{ij}\partial _i\phi 
	\\
	&
	+ \varkappa(\varkappa-1)|z|^{\varkappa -2}F^iz\partial _iz \phi  + \varkappa |z|^{\varkappa}F^i\partial _i\phi 
	+ \varkappa z|z|^{\varkappa -2}b^i\partial _iz\phi + \varkappa c|z|^{\varkappa}\phi  
	\\
	&\quad 
	+\varkappa (\varkappa -1)|z|^{\varkappa -2}\partial _izf^i \phi
	+ \varkappa z|z|^{\varkappa -2}f^i\partial _i\phi 
	+\varkappa f^0z|z|^{\varkappa -2}\phi 
	\Big]\dd r\dd x\,.
	\end{aligned}
	\]
	
	Introducing the new unknown
	\[
	v:= |z |^{\varkappa/2},
	\]
	we see that the above expression simplifies as
	\[
	\begin{aligned}
	\langle\mathscr D_{st}^{\varkappa},\phi \rangle 
	&=\iint_{[s,t]\times \T^d} \Big[\frac{4(\varkappa-1)}{\varkappa}\partial _iva^{ij}\partial _jv\phi  
	+ 2v\partial _j va^{ij}\partial _i\phi 
	\\
	&\quad 
	+ 2(\varkappa-1)v\partial _ivF^i \phi  + \varkappa v^2F^i\partial _i\phi 
	+ 2b^iv\partial _iv\phi + \varkappa cv^2\phi  
	\\
	&\quad 
	+2(\varkappa -1)v\partial _ivf^i\phi
	+ \varkappa (\sgn z)v^{2\left(\frac{\varkappa -1}{\varkappa}\right)}f^i\partial _i\phi  
	+\varkappa (\sgn z)v^{2\left(\frac{\varkappa -1}{\varkappa}\right)}f^0\phi 
	\Big]\dd r\dd x\,,
	\end{aligned}
	\]
	where $\sgn(x)=1$ if $x>0$, $\sgn(x)=-1$ if $x<0$ and $\sgn(0)=0.$
	Using as before the notation $\|\cdot \|_{a ,b}:=\|\cdot \|_{L^a (s,t;L^b )},$
	then the bound
	$v^{2\left(\frac{\varkappa - 1}{\varkappa }\right)}\leq \frac{\varkappa-1}{\varkappa}v^2 + \frac1\varkappa$
	together with H\"older Inequality yield, uniformly in $\varkappa\geq 2$:
	\[
	\begin{aligned}
	|\mathscr D^{\varkappa}_{st}|_{W^{-1,1}}
	&\lesssim 
	\lambda^{-1}(\|\nabla v\|_{2,2}^2 +\|v\|_{\infty,2}(t-s)^{1/2}\|\nabla v\|_{2,2})
	\\
	&\quad \quad 
	+\varkappa \left(\|F\|_{2r,2q}\|v\|_{\frac{2r}{r-1},\frac{2q}{q-1}}\|\nabla v\|_{2,2}+\|F\|_{r,q}\|v\|^2_{\frac{2r}{r-1},\frac{2q}{q-1}}\right)
	\\
	&\quad \quad 
	+\|b\|_{2r,2q}\|v\|_{\frac{2r}{r-1},\frac{2q}{q-1}}\|\nabla v\|_{2,2}
	+\varkappa \|c\|_{r,q}\|v\|_{\frac{2r}{r-1},\frac{2q}{q-1}}^2
	\\
	&\quad \quad 
	+ \varkappa \left(\|f^i\|_{2r,2q}\|v\|_{\frac{2r}{r-1},\frac{2q}{q-1}}\|\nabla v\|_{2,2} + \|f^i,f^0\|_{r,q}\|v\|^2_{\frac{2r}{r-1},\frac{2q}{q-1}} +\|f^i,f^0\|_{1,1}\right)\,.
	\end{aligned}
	\]
	Furthermore, Young's inequality shows that for any $\epsilon >0$:
	\begin{equation}
	\begin{aligned}
	&|\mathscr D^{\varkappa}_{st}|_{W^{-1,1}}
	\lesssim
	\left[\|v\|_{\infty,2}^2(t-s)
	+\|\nabla v\|^2_{2,2} \left(\lambda^{-1} + 1 + \frac32\epsilon \right)\right]
	\\
	&
	+\varkappa ^2\|v\|_{\frac{2r}{r-1},\frac{2q}{q-1}}^2
	\Bigg[
	\|F\|_{r,q} + \|c\|_{r,q} +\|f^i,f^0\|_{r,q}
	+\frac{1}{2\epsilon }\big(\|F\|^2_{2r,2q} +\|b\|_{2r,2q} 
	+ \|f^i\|_{2r,2q}^2\big)\Bigg]
	\\
	&\quad \quad \quad 
	+ \varkappa \|f^i,f^0\|_{1,1}
	\\
	&\quad \quad 
	=:\omega ^1_\varkappa(s,t) + \omega ^2_\varkappa(s,t) + \omega _\varkappa ^3(s,t)\,.
	\end{aligned}
	\end{equation} 
	The terms in the right hand side of the above inequality have complementary Young integrability, hence it follows that $\omega ^i_\varkappa,i=1,2,3,$ are  control functions.
	Therefore, we can proceed as in the proof of Theorem \ref{thm:existence}.
	Testing equation \eqref{ito:real_power} against $\phi =1$, and then estimating the remainder thanks to Proposition \ref{pro:apriori}, we have for $|t-s|\leq L(\alpha ,\rho_\alpha(\Y) )$ small enough:
	\[(|v|^2_{L^2})_{st}
	+ \int_s^t|\nabla v|^2_{L^2}\dd r
	\lesssim _{\lambda ,\rho _\alpha (\Y)}
	\|v\|^2_{L^\infty(s,t;L^2)}(t-s)^\alpha
	+ (t-s)^\alpha \sum_{i=1}^3\omega^i _\varkappa (s,t)\,.
	\]
	Taking $L(\alpha ,\rho_\alpha (\Y),\lambda )>0$ smaller if necessary, the first control $\omega ^1_\varkappa(s,t)$ can be absorbed to the left.
	Introducing the exponents
	\[
	\varrho_0:= \frac{2r}{r-1}\,,\quad 
	\sigma _0 = \frac{2q}{q-1}\,\,,
	\]
	we thus obtain the inequality
	\begin{equation}
	\begin{aligned}
	\label{estim:v_B}
	\sup_{t\in [0,T]}|v_t|^2_{L^2}
	&+ \int_0^T|\nabla v_t|^2_{L^2}\dd t
	\\
	&\leq C\left(\rho_\alpha(\Y),\lambda ,\|F^i,b^i,f^i\|_{2r,2q},\|c,f^0\|_{r,q}\right)\varkappa^2\left(\|v\|_{\varrho _0,\sigma _0}^2+\|f^i,f^0\|_{1,1}\right).
	\end{aligned}
	\end{equation}
	
	We now aim to apply Lemma \ref{lem:recursion} for a well-chosen sequence $U_n,n\geq 0.$
	To this end, take any
	\begin{equation}
	\label{choice_eps}
	0<\epsilon<\frac{2}{d}\left (1-\frac{1}{r} - \frac{d}{2q}\right )\,.
	\end{equation}
	and define 
	\[
	\beta :=1+\epsilon \,.
	\]
	Observe by \eqref{choice_eps} that 
	\[
	\frac{1}{\varrho_0\beta}+ \frac{d}{2\beta \sigma _0}\geq \frac{d}{4},
	\]
	which means in particular that the increased exponents 
	\[
	\varrho _1:=\varrho _0\beta ,\quad 
	\sigma _1:=\sigma _0\beta \,,
	\]
	still satisfy the condition \eqref{conditions}. Therefore, 
	letting $\varkappa=\varkappa_0:= 2$,
	then \eqref{interpolation_inequality}, \eqref{estim:v_B} and the basic inequality $\sqrt{a+b}\leq \sqrt{a}+\sqrt{b}$ give
	\begin{equation}\label{est:U0}
	U_0:=\|z\|_{\varrho_1,\sigma _1}
	\leq \widetilde C 
	\left(\|f^i,f^0\|_{1,1}^{1/2}+\|z\|_{\varrho _0,\sigma _0}\right)
	\end{equation}
	for another such constant $\widetilde C>0,$ independent of $\beta $.
	Next, for $n\geq 1$ define
	\[
	\varkappa_n:= 2\beta ^n\,,\quad 
	\quad
	\varrho _n:=\varrho _0\beta ^n,\quad 
	\quad
	\sigma _n:=\sigma _0\beta ^n\,,\quad 
	\quad
	U_n:=\|z \|^{\beta ^n}_{\varrho_{n+1},\sigma_{n+1}}+1\,.
	\]
	By \eqref{estim:v_B} and our choice of $\epsilon =\beta -1$ in \eqref{choice_eps}, we obtain similarly for every $n\geq 0$
	\[\begin{aligned}
	\|z\|^{\beta ^n}_{\varrho_{n+1},\sigma_{n+1}}=\||z|^{\beta^n}\|_{\varrho_1,\sigma _1} 
	&\leq \widetilde C\beta ^n\left(\|f^i,f^0\|_{1,1}+\||z|^{\beta ^n}\|_{\varrho_0,\sigma_0} \right)
	\\
	&\leq \hat C\beta ^n\left(1+\|z\|^{\beta ^{n-1}}_{\varrho_n,\sigma_n}\right)^\beta
	\end{aligned}
	\]
	where for convenience we now put the dependency on $\|f^i,f^0\|_{1,1}$ inside the constant $\hat C.$
	Thus, we obtain the recursive inequality
	\[
	U_{n+1}
	\leq  \hat C \beta ^nU_{n}^{\beta }\,,\quad \text{for all}\enskip n\geq 0\,,
	\]
	which by Lemma \ref{lem:recursion} implies that
	\begin{equation}
	U_n\leq 
	\hat C ^{\frac{\beta ^n-1}{\epsilon }}\beta ^{\frac{\beta ^n-1}{\epsilon ^2}-\frac{n}{\epsilon }}U_0^{\beta ^n}\,\,.
	\end{equation}
	Taking the $\beta ^n$-th root, letting $n\to \infty$ and then using \eqref{est:U0} to bound the initial term, we infer the estimate
	\begin{equation}
	\|z \|_{L^\infty([0,T]\times\T^d)}\equiv\lim_{n\to\infty}(U_n)^{\beta ^{-n}}
	\leq C\left(\|f^i,f^0\|_{L^1(0,T;L^1)}^{1/2}+\|z\|_{L^{\frac{2r}{r-1}}(0,T;L^{\frac{2q}{q-1}})}\right)
	\end{equation}
	for some new constant $C>0,$ hence proving \eqref{L_infty_rho} when $z$ is bounded.
	
	The general case follows by approximation, using the geometricity of $\Y$ and the fact that the constant $C$ in the above inequality only depends on $\rho _\alpha (\Y).$ Since this is similar to \cite{hocquet2018ito}, we omit the details.
\end{proof}

\subsection{Backward RPDEs}
\label{subsec:backward}
In this subsection, we investigate backward-type RPDEs of the form
\begin{equation}
\label{ansatz_backward}\left\{
\begin{aligned}
&\dd \sigma + (\partial _if^i - f^0)\dd t = \dd \mathbf P(g) \quad \text{on}\enskip [0,T]\times\T^d
\\
&\sigma _T(\cdot )=\sigma ^T\in L^p(\T^d)
\end{aligned}\right.
\end{equation} 
with unknown $\sigma_t(x),$ and show its equivalence with a solution of a forward problem of the same form.

Here, what plays the role of the unbounded rough driver $\Q$ in earlier sections is a pair $\mathbf P=(P^1,P^2),$ of $2$-index families of affine linear differential operators. In contrast with Definition \ref{def:Q} however, we impose the \textit{backward} Chen's relations (this terminology is not standard)
\begin{equation}
\label{backward_chen}
\left[\begin{aligned}
&\delta P^1_{s\theta t }=0
\\
&\delta P^2_{s\theta t}= -\widetilde P^1_{s\theta }\circ P^1_{\theta t}
\end{aligned}\right.
\end{equation}
for all $(s,\theta,t)\in\Delta_2$,
where $\widetilde P^1_{st}:=(P^{1}-P^1(0))_{st}.$

\begin{example}
	\label{exa:B_star}
	Let $\B=(B^1,B^2)$ be as in Example \ref{example:URD}. Define the two-parameter family of differential operators $\mathbf P=(P^1,P^2)$ as
	\[ 
	P^1_{st}:= -(B^1_{st})^*,\quad 
	P^2_{st}:= -(B^2_{st})^*\quad (s,t)\in\Delta.
	\]
	Then, for any $(s,\theta,t)\in \Delta_2$ we have
	$\delta P^1_{s \theta t}= -(\delta B^1_{s\theta t})^*=0$
	while
	\[ 
	\delta P^2_{s\theta t}= -(B^1_{\theta t}\circ B^1_{s\theta})^*
	= - (B^1_{s\theta})^* \circ (B^1_{\theta t})^*
	= - P^1_{s\theta}\circ P^1_{\theta t}\enskip ,
	\]
	hence $\mathbf P$ satisfies the backward chen's relations \eqref{backward_chen}.
\end{example}

First, we need a definition.
\begin{definition}
	\label{def:backward_solution}
	Let $T>0$, $\alpha \in(1/3,1/2]$ and fix $p\in [1,\infty].$
	Assume that we are given
	$f^i\in L^1(0,T;L^p),$ $i=0,\dots d,$ and that $g$ is controlled by $P$ with backward Gubinelli derivative $g',$
	in the sense that there is a control function $\varpi $ and a path $g'\in L^\infty(0,T;L^p)$ such that the two-parameter element
	\[
	\mathscr R_{st}^{g}:= g_{st} - P_{st}^1(g'_t)\quad (s,t)\in\Delta \,,
	\]
	verifies the bound 
	\[
	|\mathscr R_{st}|_{W^{-2,p}}\leq \varpi (s,t)\,,\quad \forall (s,t)\in\Delta \,.
	\]
	
	\begin{itemize}	
		\item
		A mapping $\sigma \colon[0,T]\to L^p$ is called a {\emph{backward $L^p$-weak solution}} to the
		rough PDE \eqref{rough_PDE_gene} if it fulfills the following conditions
		\begin{enumerate}[label=(\arabic*)]
			\item $\sigma \colon[0,T]\to L^p$ is weakly-$*$ continuous and belongs to $L^\infty(0,T;L^p)$;
			\item for every $\phi \in W^{2,p'}$ with $1/p+1/p'=1$, and every $(s,t)\in\Delta :$
			\begin{equation}
			\label{nota:solution_backward}
			\int_{\T^d}\sigma _{st}\phi \dd x=\iint_{[s,t]\times\T^d} (f^i_r\partial _i\phi - f^0_r\phi ) \dd x\dd r
			+ \Big\langle P^1_{st}(g_t)+P^2_{st}(g'_t)+\sigma _{st}^\natural,\phi \Big\rangle\,,
			\end{equation}
			for some $\sigma ^\natural\in \mathcal C^{1+}(0,T;W^{-3,p}).$
		\end{enumerate}
		
		\item
		As before, we call it a \emph{backward $L^p$-energy solution} of \eqref{rough_PDE_gene} if in addition
		$v$ belongs to $L^p(0,T; W^{1,p}).$
	\end{itemize}
\end{definition}

\begin{proposition}
	\label{pro:equivalence}
	Let $f,g,g',\Y$ be as in Definition \ref{def:solution}, and for $0\leq s\leq t\leq T,$ introduce 
	\[
	\check f_{t}:=f_{T-t}\,,
	\quad 
	\check g_{t}:=g_{T-t}\,,
	\quad 
	\check g'_{t}:=g'_{T-t}\,,
	\]
	and for $i=1,2,$ denote by
	\[
	\check P_{st}^i:=-P_{T-t,T-s}^i\,.
	\]
	
	Then, $\mathbf P$ fulfills the backward Chen's relations \eqref{backward_chen} if an only if $\mathbf{\check P}$ satisfies the forward analogue, that is \eqref{affine_chen}. Moreover, for such $\mathbf P,$ then $\sigma  _t$ is a solution of the backward RPDE \eqref{ansatz_backward}
	if and only if $v _t:=\sigma_{T-t}$ is a solution of the forward equation
	\[\left\{
	\begin{aligned}
	&\dd v_t +(-\partial _i\check f^i_t + \check f^0_t)\dd t = \dd \mathbf{\check P}_t(v_t),\quad \text{on}\enskip [0,T]\times \T^d\,,
	\\
	&v_0:=\sigma ^T\,.
	\end{aligned}\right.
	\]
\end{proposition}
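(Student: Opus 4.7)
The plan is to prove Proposition \ref{pro:equivalence} by a direct algebraic verification, exploiting the fact that the entire statement is invariant under the substitution $(s,\theta,t) \in \Delta_2 \longleftrightarrow (s',\theta',t') := (T-t, T-\theta, T-s) \in \Delta_2$. No analytic input beyond bookkeeping of signs is required. I would split the proof into two independent parts corresponding to the two assertions.

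For the first assertion (equivalence of Chen's relations), the definition $\check P^i_{s't'} := -P^i_{T-t',T-s'}$ yields immediately
\[
\delta \check P^i_{s'\theta' t'}
= \check P^i_{s't'} - \check P^i_{s'\theta'} - \check P^i_{\theta' t'}
= -P^i_{st} + P^i_{\theta t} + P^i_{s\theta}
= -\delta P^i_{s\theta t},
\]
so $\delta \check P^1 \equiv 0$ iff $\delta P^1 \equiv 0$. For the second level, note that the affine parts satisfy $\widetilde{\check P}^1_{s't'} = -\widetilde P^1_{st}$ (taking the linear part commutes with negation). Thus the forward Chen's relation for $\check{\mathbf{P}}$, namely
$\delta \check P^2_{s'\theta' t'} = \widetilde{\check P}^1_{\theta' t'} \circ \check P^1_{s'\theta'}$,
rewrites as
$-\delta P^2_{s\theta t} = (-\widetilde P^1_{s\theta}) \circ (-P^1_{\theta t})$,
which is exactly the backward Chen's relation \eqref{backward_chen} for $\mathbf P$. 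The two are equivalent.

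For the second assertion, I fix $(s',t') \in \Delta$, set $s := T-t'$, $t := T-s'$, and observe the key identity $v_{s't'} = \sigma_{T-t'} - \sigma_{T-s'} = -\sigma_{st}$. Starting from the backward weak formulation \eqref{nota:solution_backward} for $\sigma$ and multiplying by $-1$, the drift integral transforms via the change of variables $r = T-\tau$ (the Jacobian $-d\tau$ is absorbed by the reversal of the limits) into an integral over $[s',t']$ involving $\check f^i$ and $\check f^0$. For the rough term I use the identity $P^i_{st}(h) = -\check P^i_{s't'}(h)$ for any input $h$, together with $g_t = \check g_{s'}$ and $g'_t = \check g'_{s'}$, so that the two negative signs combine as
\[
-\langle P^1_{st}(g_t) + P^2_{st}(g'_t), \phi \rangle = \langle \check P^1_{s't'}(\check g_{s'}) + \check P^2_{s't'}(\check g'_{s'}), \phi \rangle.
\]
Finally, defining $v^\natural_{s't'} := -\sigma^\natural_{st}$, the remainder class $\mathcal C^{1+}(0,T;W^{-3,p})$ is obviously preserved under time reversal and sign change. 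Collecting all terms reproduces verbatim the forward weak equation of Definition \ref{def:solution}. The converse direction is symmetric: starting from a forward solution $v$, the path $\sigma_t := v_{T-t}$ satisfies the backward weak equation by the same computation in reverse.

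The main (mild) obstacle is simply the sign bookkeeping, especially reconciling the minus sign in the definition $\check P^i := -P^i$ with the opposite placement of the drift terms in Definitions \ref{def:solution} and \ref{def:backward_solution}; once this is carried out carefully, everything conspires correctly. No regularity or compactness argument is needed since the equivalence is purely formal/distributional.
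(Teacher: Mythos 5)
Your proof is correct and follows essentially the same route as the paper's: a direct algebraic check of the Chen's relations under the time-reversal map $(s,\theta,t)\leftrightarrow(T-t,T-\theta,T-s)$, followed by multiplying the backward weak formulation by $-1$ and performing the substitution $r=T-\tau$ in the drift integral. The only noteworthy difference is a sign: you correctly obtain $v^\natural_{s't'}=-\sigma^\natural_{st}$ when tracking the remainder through the equation (consistent with Definition \ref{def:backward_solution}), whereas the paper writes $v^\natural_{st}:=\sigma^\natural_{T-t,T-s}$ without the minus sign; this appears to be a typo in the paper and is immaterial for the membership of $v^\natural$ in $\CC(0,T;W^{-3,p})$, since this class is closed under multiplication by $-1$.
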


\begin{proof}
	We only prove the ``only if'' part, the opposite direction being similar.
	First, observe that thanks to \eqref{backward_chen}, we have
	\[
	\delta \check P^1_{s\theta t}=0
	\]
	and
	\[\begin{aligned}
	\delta \check P^2_{s\theta t}
	&= -P^2_{T-t,T-s} + P^2_{T-t,T-\theta } + P^2_{T-\theta ,T-s}
	\\
	&= \widetilde P^1_{T-t,T-\theta }\circ P^1_{T-\theta ,T-s}
	= \widetilde{(\check P^1_{\theta t})}\circ\check P^1_{s\theta }\,,
	\end{aligned}
	\]
	showing that $\mathbf{\check P}$ satisfies \eqref{affine_chen}.

	Next, let $0\leq s\leq t \leq T$ and denote by $t':=T-t$ while $s':=T-s.$ 
	Assuming that $\sigma _t$ is a solution of the backward problem, since $t'\leq s'$ we have by definition of a backward solution:
	\[
	\begin{aligned}
	&\int_{\T^d}(v_{st})\phi \dd x
	=-\int_{\T^d} (\sigma _{s'}-\sigma _{t'})\phi \dd x
	\\
	&=-\iint_{[t',s']\times\T^d} (-f^i_r\partial _i\phi + f^0_r\phi ) \dd x\dd r
	+ \Big\langle -P^1_{t's'}(g_{s'})-P^2_{t's'}(g'_{s'})+\sigma _{t's'}^\natural,\phi \Big\rangle\,,
	\\
	&=\iint_{[s,t]\times\T^d} (-\check f^i_{\tau }\partial _i\phi + \check f^0_{\tau }\phi ) \dd x\dd \tau + 
	\Big\langle \check P^1_{st}(\check g'_{t})+\check P^2_{st}(\check g'_t)+\sigma _{T-t,T-s}^\natural,\phi \Big\rangle
	\end{aligned}
	\]
	Let $v^\natural_{st}:=\sigma _{T-t,T-s}^\natural, $ and take $z>1$ and a control $\omega \colon\Delta \to\R_+$ so that $|\sigma ^{\natural}_{t's'}|_{W^{-3,p}}\leq \omega (t',s')^z.$
	To conclude, it is sufficient to observe that $\check \omega (s,t):=\omega (T-t,T-s), $ $0\leq s\leq t\leq T$ is a control function. But this is an immediate consequence of the corresponding property for $\omega $.
\end{proof}

As a consequence of this equivalence property and our forward solvability results, Theorem \ref{thm:parabolic_gene}, we easily obtain the following statement.
\begin{corollary}
	\label{cor:backward}
	Fix any $\sigma ^T\in L^2,$ and let $\mathscr M$ be as in \eqref{general_M}. 
	Consider a family of differential operators $\mathbf P=(P^1_{st},P^2_{st})$ satisfying \eqref{backward_chen}.
	Assume that $\Q:=\mathbf{\check P}$ is geometric, in the sense that there is some triad of coefficients $(Y;\bb{Y^j}{\partial_j Y};\bb{Y^j}{\partial_j Y^{-1}}+\bb{Y^0}{Y^{-1}})$ satisfying Assumption \ref{ass:solution_coef}, and such that $\Q$ is given by the relation \eqref{Q_Y}.
	
	Then, there exists a unique solution $\sigma $ to the backward problem
	\[
	\left\{
	\begin{aligned}
	&\dd \sigma  + (\mathscr A\sigma +f)\dd t = \dd \mathbf P^i(\sigma)
	\quad \text{on}\enskip [0,T]\times \T^d\,,
	\\
	& \sigma _T=\sigma ^T\,.
	\end{aligned}\right.
	\]
\end{corollary}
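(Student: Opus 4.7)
The plan is to apply the time-reversal equivalence of Proposition~\ref{pro:equivalence} in order to recast the backward Cauchy problem as a forward one, and then invoke Theorem~\ref{thm:parabolic_gene} directly. First I would set $v_t:=\sigma_{T-t}$ together with the reversed coefficients $\check a^{ij}(t,x):=a^{ij}(T-t,x)$, $\check F^i,\check b^i,\check c,\check f^0,\check f^i$ defined analogously. Under the change of variable $t\mapsto T-t$, the strong parabolicity bound on $a^{ij}$ is obviously preserved, and the Lebesgue spaces $L^r(0,T;L^q)$ and $L^{2r}(0,T;L^{2q})$ are invariant, so the integrability conditions \eqref{hyp:coeff}--\eqref{subject_r_q} continue to hold for the reversed drift coefficients. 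Thus $\check{\mathscr M}_t:=\mathscr M_{T-t}$ fits the framework of Theorem~\ref{thm:parabolic_gene} on $[0,T]$.

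Next, by hypothesis $\Q=\check{\mathbf P}$ is generated through formula \eqref{Q_Y} by a triad $(Y;\bb{Y^j}{\partial_jY};\bb{Y^j}{\partial_jY^{-1}}+\bb{Y^0}{Y^{-1}})$ satisfying Assumption~\ref{ass:solution_coef}. In particular it is an affine-linear unbounded rough driver in the sense of Definition~\ref{def:Q}, with Chen's relations \eqref{affine_chen}; the first bullet of Theorem~\ref{thm:parabolic_gene} together with the geometricity of $\Y$ guarantee that the forward Cauchy problem
\[
\left\{\begin{aligned}
&\dd v_t +(-\check{\mathscr M}_tv_t+\check f^0_t-\partial _i\check f^i_t)\dd t=\dd \Q_t(v_t),\quad \text{on}\enskip [0,T]\times\T^d,\\
&v_0=\sigma^T\in L^2(\T^d),
\end{aligned}\right.
\]
admits a unique $L^2$-energy solution. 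Setting $\sigma_t:=v_{T-t}$ and applying the ``if and only if'' of Proposition~\ref{pro:equivalence} yields a backward $L^2$-energy solution to the original problem. Uniqueness transfers in the same manner: two backward solutions would produce, via the same reversal, two forward solutions with identical data, which must coincide by Theorem~\ref{thm:parabolic_gene}.

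The argument is essentially bookkeeping, and I do not anticipate any genuine analytic obstacle once the forward theory has been established. The single point that requires care is the accurate tracking of signs under time-reversal, in particular the sign flips in the drift $(\partial_i f^i-f^0)\mapsto(\check f^0-\partial_i \check f^i)$ arising from $\dd v_t=-\dd\sigma_{T-t}$, and the minus sign in $\check P^i_{st}=-P^i_{T-t,T-s}$ which converts the backward Chen's relations \eqref{backward_chen} for $\mathbf P$ into the forward ones \eqref{affine_chen} for $\Q$, a computation already carried out in the proof of Proposition~\ref{pro:equivalence}.
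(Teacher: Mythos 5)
Your proposal follows exactly the paper's route: invoke the time-reversal equivalence of Proposition~\ref{pro:equivalence} to pass to a forward problem, apply Theorem~\ref{thm:parabolic_gene} there, and pull the solution back via the involution $t\mapsto T-t$. The additional bookkeeping you supply (preservation of the parabolicity bound and the $L^r(L^q)$ integrability under reflection, and the sign flips in the drift and in $\check P^i_{st}=-P^i_{T-t,T-s}$) is correct and is precisely what the paper leaves to the reader.
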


\begin{proof}
	This is immediate by the use of Proposition \ref{pro:equivalence}, Theorem \ref{thm:parabolic_gene}, and the involution $(v_t)_{t\in[0,T]}\mapsto \check v\equiv(v_{T-t})_{t\in[0,T]}$. Details are left to the reader.
\end{proof}

In our way to prove the second uniqueness result, Theorem \ref{thm:regular}, we will need a rigorous justification of the weighted inequality \eqref{product_v_m}.
For that purpose, we now state an extension of the product formula (Proposition \ref{pro:product}) that applies for backward versus forward solutions. For simplicity, we only consider the rough input given by Example \ref{example:URD}, but several generalizations should be possible. 

\begin{proposition}[Forward versus backward product formula]
	\label{pro:product_backward}
	Let $\B=\B\{\X\}$ be the unbounded rough driver given by Example \ref{example:URD}, where $\X$ satisfies Assumption \ref{ass:geometric}.
	
	Fix $p,p'\in [1,\infty]$ with $1/p+1/p'=1,$ and consider $u\colon[0,T]\to L^p,$ $v\to L^{p'}$ such that
	\[
	\left\{\begin{aligned}
	&u\in \mathcal D_{B}^{\alpha ,p}
	\\
	&\dd u = F\dd t + \dd \B u\,,
	\end{aligned}\right.
	\]
	in $L^p$ (forward sense),
	while $v$ solves the following backward problem, in $L^{p'}$:
	\[
	\left\{\begin{aligned}
	&v\in \mathcal D_{-B^*}^{\alpha ,p'}
	\\
	&\dd v = G\dd t  -\dd \B ^* v 
	\end{aligned}\right.
	\]
	(in the sense of Definition \ref{def:backward_solution} with $\Q$ as in Example \eqref{exa:B_star}).
	Assume furthermore that $u(\cdot )G(\cdot -a) $ and $ F(\cdot -a) v(\cdot )$ are well-defined as distributions in $L^1(0,T;W^{-1,1}),$ for any $a\in \R^d$ with $|a|\leq 1.$ 
	
	Then, the pointwise product $uv$ has finite variation as a path with values in $W^{-1,1},$
	and it is a forward, $L^1$-solution of the following partial differential equation
	\begin{equation}
	\dd (uv)= \big[uG +Fv\big]\dd t\,\quad \text{on}\enskip [0,T]\times \T^d\,.
	\end{equation}
\end{proposition}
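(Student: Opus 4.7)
The plan is to derive the product formula by combining the Euler--Taylor expansions of the forward solution $u$ and the backward solution $v$ through the elementary increment identity
\[
\int_{\T^d}\phi\,(uv)_{st}\,\dd x \;=\; \langle u_{st},\,\phi v_t\rangle + \langle v_{st},\,\phi u_s\rangle,
\]
and to exploit the duality between $\B$ and $-\B^{*}$ to cancel the rough contributions. The underlying smooth-path heuristic is the Leibniz identity $\dd(uv) = [uG+Fv]\,\dd t + \partial_i(\dot X^i uv)\,\dd t$, whose rough remnant is a total spatial divergence and therefore vanishes after integration against $\phi\equiv 1$ on the torus.

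First I would record the expansions
\[
u_{st} = \int_s^t F_r\,\dd r + B^1_{st}u_s + B^2_{st}u_s + u^\natural_{st},\qquad
v_{st} = \int_s^t G_r\,\dd r - B^{1,*}_{st}v_t - B^{2,*}_{st}v_t + v^\natural_{st},
\]
with remainders $u^\natural \in \mathcal C^{1+}(0,T;W^{-3,p})$ and $v^\natural \in \mathcal C^{1+}(0,T;W^{-3,p'})$ provided by Proposition~\ref{pro:apriori} (in the backward case after converting $v$ to a forward problem via Proposition~\ref{pro:equivalence}). Substituting into the identity above and integrating by parts on $\T^d$, the central algebraic computation gives
\[
\langle B^1_{st}u_s,\,\phi v_t\rangle + \langle -B^{1,*}_{st}v_t,\,\phi u_s\rangle \;=\; -\langle u_s v_t,\, X^i_{st}\partial_i\phi\rangle,
\]
together with an analogous identity at the second level (modulo a symmetric commutator involving $\partial_j\phi\,\partial_i u_s$); every such cross term carries at least one derivative of $\phi$, hence vanishes identically when $\phi\equiv 1$. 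This is precisely the algebraic cancellation encoded by the choice of dual driver $-\B^{*}$.

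The remaining reorganization consists in decomposing the drift integrals as
\[
\int_s^t\langle F_r,\phi v_t\rangle\,\dd r \;=\; \int_s^t\langle F_r,\phi v_r\rangle\,\dd r + \int_s^t\langle F_r,\phi\, v_{rt}\rangle\,\dd r,
\]
and analogously for the $G$-integral with $u_{sr}$. The correction terms are estimated as $\mathcal C^{1+}$ remainders by combining the $\alpha$-Hölder regularity of $u$, $v$ in negative Sobolev spaces with the hypothesis that the translated products $u(\cdot)G(\cdot-a)$ and $F(\cdot-a)v(\cdot)$ lie in $L^1(0,T;W^{-1,1})$ uniformly in $|a|\le 1$. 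Collecting the estimates and invoking the Sewing Lemma then yields both the bounded variation of $uv$ as a $W^{-1,1}$-valued path and the evolution identity $\dd(uv)=[uG+Fv]\,\dd t$, the latter being sharpest (and truly unambiguous) after testing against $\phi\equiv 1$.

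The main obstacle is the rigorous justification of the integrations by parts performed in the algebraic step, since $u_s$ and $v_t$ have no a priori spatial regularity. I would handle this by mollification or, equivalently, by approximating $\X$ through its canonical-lift sequence $\X(n)\to\X$, so that the smooth Leibniz rule applies verbatim; the continuous dependence statements of Theorem~\ref{thm:parabolic_gene} and Corollary~\ref{cor:backward}, together with the translation assumption on $uG$ and $Fv$, then permit passage to the limit in both the drift integrals and the commutator $\langle u_s v_t,\, X^i_{st}\partial_i\phi\rangle$ arising from the product.
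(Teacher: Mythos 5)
Your proposal captures the correct algebraic skeleton (writing $(uv)_{st}=u_{st}v_t+u_sv_{st}$, plugging in the two Euler--Taylor expansions, and observing that the $\B$ and $-\B^*$ contributions annihilate each other after testing against $\phi\equiv 1$), but the central difficulty is not the algebra -- it is the well-definedness of the intermediate pairings, and your handling of it is not adequate. The term $\langle B^1_{st}u_s,\phi v_t\rangle=\int X^i_{st}\,\partial_i u_s\cdot\phi v_t\,\dd x$ pairs $\partial_i u_s\in W^{-1,p}$ against $\phi v_t\in L^{p'}$, which is not a valid duality pairing: you would need $\phi v_t\in W^{1,p'}$, and no such regularity is available. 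The same problem occurs in your drift decomposition $\int_s^t\langle F_r,\phi v_t\rangle\,\dd r$, since $F_r\in W^{-1,p}$ cannot be paired with $\phi v_t\in L^{p'}$. You acknowledge the obstacle and propose ``mollification or, equivalently, approximating $\X$ through its canonical-lift sequence,'' but these are not equivalent and only the first is relevant: smoothing $\X$ in time does nothing to raise the spatial regularity of the solutions $u,v$, which remain merely $L^p,L^{p'}$-valued, so the problematic pairings persist unchanged. What is actually needed is a doubling-of-variables/tensorization argument with DiPerna--Lions-type commutator estimates, and this is exactly what the paper does.

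Concretely, the paper works with $(u\otimes v)_t(x,y):=u_t(x)v_t(y)$ on $\T^d\times\T^d$, so that $B^1_{st}\otimes 1$ differentiates only in $x$ while $1\otimes B^{1,*}_{st}$ differentiates only in $y$; every pairing with a test function $\phi(x,y)\in W^{3,\infty}(\T^d\times\T^d)$ is then unambiguously defined, since the ``other'' variable is inert. It builds from $\B$ and $-\B^*$ a new unbounded rough driver $\boldsymbol\Gamma=(\Gamma^1,\Gamma^2)$ on the product space (defined via $\Gamma^1_{st}=B^1_{st}\otimes 1-1\otimes B^{1,*}_{st}$, etc.), checks that $\boldsymbol\Gamma$ satisfies Chen's relations using the forward Chen relations for $\B$ and the backward ones for $-\B^*$, and obtains that $u\otimes v$ solves a rough PDE $\dd(u\otimes v)=(u\otimes G+F\otimes v)\dd t+\dd\boldsymbol\Gamma(u\otimes v)$ on $\T^d\times\T^d$. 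The passage back from $u(x)v(y)$ to the pointwise product $u(x)v(x)$ is then carried out by blowing up the test function on the diagonal, and this is precisely where the translation hypothesis $u(\cdot)G(\cdot-a),\,F(\cdot-a)v(\cdot)\in L^1(W^{-1,1})$ for $|a|\le1$ is invoked (the paper cites the commutator estimates of the earlier reference, which encode the shifts $x-y=a$). Your proposal lists this hypothesis among the ingredients but never explains how it enters -- it only becomes meaningful once the two variables have been decoupled. Without the tensorization step your argument would stall at the first pairing, so this is a genuine gap rather than an alternative route.
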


\begin{proof}
	By definition of the backward equation,
	we have $v^{\natural,-}\in \CC(W^{-3,p'}),$ where for each $s\leq t$:
	\[\begin{aligned}
	v^{\natural,-}_{st}
	&:=v_{st} - \int_s^tG\dd r +B_{st}^{1,*}v_t +B_{st}^{2,*}v_t 
	\\
	&=v_{st} - \int_s^tG\dd r +B_{st}^{1,*}v_s +B_{st}^{2,*}v_s + (B_{st}^{1,*} +B_{st}^{2,*})v_{st}\,.
	\end{aligned}
	\]
	Letting 
	\[
	v^{\natural,+}_{st}:=v_{st} + B^{1,*}_{st}v_s + (B^{2,*}_{st} - (B^{1,*}_{st})^2)v_s
	\]
	it is easily observed that $v^{\natural,+}_{st}$ also belongs to $\CC(W^{-3,1}).$
	Defining
	\[(u\otimes v)_t(x,y):=u_t(x)v_t(y)\,,
	\]
	and then using the identity
	$(u\otimes v)_{st} = u_{st}\otimes v_s + u_{st}\otimes v_{st} + u_s\otimes v_{st}$,
	it is seen by immediate algebraic computations that
	\begin{equation}
	\label{u_tensor_v}
	\begin{aligned}
	(u\otimes v)_{st}
	&- \big[B^1_{st}\otimes 1-1\otimes B^{1,*}_{st}\big](u\otimes v)_s
	\\
	&-\big[B^2_{st}\otimes1 -B^1_{st}\otimes B^{1,*}_{st} -1\otimes (B^{2,*}_{st} - (B^{1,*}_{st})^2)\big](u\otimes v)_s
	\\
	&= u^{\natural}_{st}\otimes v_s + u_s \otimes v^{\natural,-}_{st} 
	+R^u_{st}\otimes v_{st} + B^1_{st}u_s\otimes \mathscr R^v_{st} - B^1_{st}\otimes B^{1,*}_{st}v_{st}\,.
	\end{aligned}
	\end{equation} 
	where we recall the notations
	\[
	R^u_{st}=u_{st}-B_{st}u_s,\quad \mathscr R^v_{st}=v_{st}+B^*_{st}v_t\,.
	\]
	In the formula \eqref{u_tensor_v}, it is easy to see that each term lies in $C^{3\alpha }_2(W^{-3,1})$; the proof of this fact follows exactly the lines of \cite[Section 5]{hocquet2017energy}, and therefore we omit the details.
	Next, define $\boldsymbol{\Gamma} _{st}=(\Gamma ^1,\Gamma ^2)$ by
	\[\left[\begin{aligned}
	&\Gamma ^1_{st}:= B^1_{st}\otimes 1 -1 \otimes B_{st}^{1,*}
	\\
	&\Gamma ^2_{st}:= B^2_{st}\otimes 1 -B^1_{st} \otimes B_{st}^{1,*} + 1\otimes ((B^{1,*}_{st})^2 -B^{2,*}_{st})
	\end{aligned}\right.
	\]
	Using the forward and backward Chen's relations, we have $\delta \Gamma ^1_{s\theta t}=0$ and
	\[\begin{aligned}
	\delta \Gamma ^2_{s\theta t}
	&= \delta B^2_{s\theta t}\otimes 1 - \left(B^1_{\theta t}\otimes B^{1,*}_{s\theta } + B^1_{s\theta }\otimes B^{1,*}_{\theta t}\right) 
	\\
	&\quad \quad \quad \quad \quad 
	+ 1\otimes \left(B^{1,*}_{s\theta }\circ B^{1,*}_{\theta t}+ B^{1,*}_{\theta t}\circ B^{1,*}_{s\theta } - \delta B^{2,*}_{s\theta t}\right)
	\\
	&=\Gamma ^1_{\theta t}\circ \Gamma ^1_{s\theta }\,,
	\end{aligned}
	\]
	thus $\boldsymbol\Gamma $ fulfills the usual Chen's relations for unbounded rough drivers.
	
	In particular, we infer from \cite[Section 5]{hocquet2017energy} that $u\otimes v$ is a forward solution to the problem
	\[
	\dd (u\otimes v) = (u\otimes G+ F\otimes v)\dd t + \dd \boldsymbol\Gamma (u\otimes v)\,,
	\]
	in the sense of the corresponding Euler-Taylor expansion \eqref{euler-taylor_Q}, in $W^{-3,1}(\T^d\times\T^d).$
	
	Now, the remainder of the proof is carried out by the exact same steps as in \cite[Section 6]{hocquet2017energy}, hence we leave the details to the reader.
\end{proof}

\section{A rough transport problem}
\label{sec:transport}
In this section, we consider yet another $(d+2)$-dimensional path $Z^i\in C^\alpha(0,T;W^{3,\infty})$ for $i=-1,0,\dots,d$ where $\alpha>1/3.$
Our aim is solve the following rough transport problem in $L^\infty(\mathbb T^d)$:
find a controlled path $\Phi\colon[0,T]\to L^\infty(\T^d)$, solution to the (forward) rough transport equation
\begin{equation}
\label{transport_gene}
\left\{\begin{aligned}
&\dd  \Phi = \dd \Z^i\partial _i\Phi + \dd \Z^{-1},\quad [0,T]\times\T^d\,,
\\
&\Phi _0=0\,,
\end{aligned}\right.
\end{equation} 
with an enhancement $\Z$ of $Z$ as in Assumption \ref{ass:solution_coef}.
For simplicity only, we consider no zero order multiplicative term $Z^0$ in \eqref{transport_gene} (this in turn will be enough for the purpose of proving Theorem \ref{thm:regular}). For notational coherence we take a similar labelling as in Section \ref{sec:parabolic} by letting $Z^0=0$ and denoting the ``source term'' by the $-1$ exponent. By geometricity, it is immediate to see that the entries $Z^0,\bb{Z^0}{Z^{-1}}$ and $\bb{Z^{\mu}}{\partial _\mu Z^0}$ vanish identically. 

Our main result in this section is the following.

\begin{theorem}
	\label{thm:transport}
	Let $\Z $ be as in Assumption \ref{ass:solution_coef} such that $Z^0=\bb{Z^0}{Z^{-1}}=\bb{Z^{\mu}}{\partial _\mu Z^0}=0.$
	The following holds true.
	
	\begin{enumerate}[label=(\roman*)]
		\item \label{transport_i}
		There exists $\Phi\colon [0,T]\to \cap_{p\in[1,\infty)}L^p$ such that for every $p\in[1,\infty)$, $\Phi$ is an $L^p$-weak solution of \eqref{transport_gene}, in the sense of Definition \ref{def:solution}. It admits moments of arbitrary order, more precisely: for each $n$ there exists a finite constant $C(n)>0$ such that 
		\[
		\|\Phi\|_{L^\infty(L^n)}\leq C (n)\,.
		\]
		
		\item \label{transport_ii} Letting 
		$\mathcal E_t(x):=\exp -\Phi_t(x) $, we have
		\[
		\dd \mathcal E=(\dd \Z^i\partial _i\mathcal  - \dd \Z^{-1})\mathcal E\,,
		\]
		in the sense of the following Euler-Taylor expansion in $W^{-2,1}$
		\[\begin{aligned}
		&\mathcal E_{st}
		=\left(Z_{st}^i\partial _i  -Z^{-1}_{st}\right)\mathcal E_s
		\\
		&
		+ \Big(\frac12Z^i_{st}Z^j_{st}\partial _{ij} + \left[\bb{Z^\mu }{\partial _\mu Z}^i_{st}-Z^{-1}_{st}Z^i_{st}\right]\partial _i -\bb{Z^\mu }{\partial _\mu Z^{-1}}_{st} + \frac12(Z^{-1}_{st})^2\Big)\mathcal E_s +\mathcal E_{st}^\natural\,,
		\end{aligned}
		\]
		for $(s,t)\in\Delta$
		and where $\mathcal E^\natural\in\CC(0,T;W^{-3,1}).$
		
		\item \label{transport_iii}
		Set $d=1$, and assume that $\rho_\alpha(\partial_x \Z)<\infty.$
		Then, the solution obtained above belongs to $L^\infty([0,T]\times \mathbb T^1),$ and its spatial derivative $\partial _x\Phi$ also has moments of arbitary order.
	\end{enumerate}
\end{theorem}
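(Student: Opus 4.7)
\emph{Part \ref{transport_i}} is the main one. By the geometricity in Assumption \ref{ass:solution_coef}, I would approximate by smooth canonical lifts $\Z(n)\to\Z$ and solve the classical equation $\partial_t\Phi(n)=\dot Z(n)\cdot\nabla\Phi(n)+\dot Z^{-1}(n)$ by characteristics. For the uniform $L^\infty(L^{2k})$-bound, the classical chain rule gives
\[
\partial_t\Phi(n)^{2k}=\dot Z(n)\cdot\nabla\Phi(n)^{2k}+2k\,\Phi(n)^{2k-1}\dot Z^{-1}(n),
\]
and I would recast this as a rough PDE of the form \eqref{rough_PDE_gene} driven by $\B[\Z(n)]$ (Example \ref{example:URD}), with source $2k\Phi(n)^{2k-1}\dd\Z^{-1}(n)$. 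Testing against $\phi=1$ and invoking Proposition \ref{pro:apriori} to bound the $W^{-3,1}$-remainder, the transport part is of order $C\rho_\alpha(t-s)^\alpha\|\Phi(n)_s\|_{L^{2k}}^{2k}$ (since $B^{1,*}_{st}1$ and $B^{2,*}_{st}1$ are $L^\infty$-bounded by $\rho_\alpha$), while H\"older and Young estimate the first-order source as $C(k)\rho_\alpha(t-s)^\alpha(\|\Phi(n)_s\|_{L^{2k}}^{2k}+1)$, with a similar $(t-s)^{2\alpha}$ contribution from $\bb{Z^\mu}{\partial_\mu Z^{-1}}_{st}$. The rough Gronwall lemma (Lemma \ref{lem:gronwall}) then yields $\sup_n\|\Phi(n)\|_{L^\infty(L^{2k})}\leq C(k)$, and passage to the limit combines Banach--Alaoglu, equicontinuity in $W^{-1,2k}$ (as in the proof of Theorem \ref{thm:existence}), and diagonal extraction over $k$.

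\emph{Part \ref{transport_ii}.} The classical identity $\partial_t\mathcal E(n)=\dot Z(n)\cdot\nabla\mathcal E(n)-\mathcal E(n)\dot Z^{-1}(n)$ for $\mathcal E(n):=\exp(-\Phi(n))$ is a \emph{multiplicative} linear transport equation---an affine RPDE covered by Assumption \ref{ass:solution_coef} with coefficient triad $(Y,Y^0,Y^{-1})=(Z,-Z^{-1},0)$. Substituting into the template \eqref{Q_Y} for $Q^1,Q^2$ produces exactly the Euler--Taylor expansion stated in \ref{transport_ii}, with the term $\tfrac12(Z^{-1}_{st})^2$ arising as the iterated contribution of the zero-order factor $-Z^{-1}$. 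Uniform $L^1$-control on $\mathcal E(n)$ needed to pass to the limit would follow from Fatou together with uniform integrability bootstrapped from the moments on the negative part of $\Phi$ obtained in part \ref{transport_i}.

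\emph{Part \ref{transport_iii}.} In $d=1$, formal differentiation of \eqref{transport_gene} shows that $\psi:=\partial_x\Phi$ solves the affine RPDE
\[
d\psi=d\Z\,\partial_x\psi+d(\partial_x\Z)\,\psi+d(\partial_x\Z^{-1}),
\]
which again fits Assumption \ref{ass:solution_coef} with triad $(Z,\partial_xZ,\partial_xZ^{-1})$; the corresponding enhancement is well-defined precisely because $\rho_\alpha(\partial_x\Z)<\infty$ by hypothesis. Running part \ref{transport_i} on $\psi$---with only a mild, control-bounded correction to absorb the zero-order term $(\partial_x\Z)\psi$---would yield $\|\psi\|_{L^\infty(L^p)}<\infty$ for every finite $p$. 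Combined with $\Phi\in L^\infty(L^p)$ from part \ref{transport_i} and the Sobolev embedding $W^{1,p}(\mathbb T^1)\hookrightarrow L^\infty(\mathbb T^1)$ (any $p>1$), applied uniformly in time, this gives $\Phi\in L^\infty([0,T]\times\mathbb T^1)$.

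The main obstacle lies in part \ref{transport_i}: at H\"older exponent $\alpha>1/3$, the nonlinear source $\int_s^t\Phi^{2k-1}\dd\Z^{-1}$ cannot be treated as a mere Bochner integral and requires its genuine rough interpretation, which forces unpacking the controlled-path structure of $\Phi^{2k-1}$ from that of $\Phi$ itself. This bootstrap is precisely what substitutes for the maximum principle, unavailable because of the additive rough input as anticipated in Section \ref{subsec:explication}.
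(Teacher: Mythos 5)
Your overall strategy for part~\ref{transport_i} -- approximate by smooth $\Z(n)$, derive an equation for $\Phi(n)^{2k}$, test against $\phi=1$, and apply rough Gronwall -- is the one the paper follows, but there is a genuine gap precisely where you flag it at the end, and the way you invoke Proposition~\ref{pro:apriori} hides it.

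The trouble is that Proposition~\ref{pro:apriori} estimates the remainder for an equation of the form $\dd v = F\,\dd t + \dd\Q(v)$ in terms of $\int_s^t |F_r|_{W^{-2,p}}\,\dd r$. If you put $F = 2k\,\Phi(n)^{2k-1}\,\dot Z^{-1}(n)$ and try to apply that bound, the resulting term carries the total variation of $Z^{-1}(n)$, which diverges as $n\to\infty$; so the remainder estimate you get is \emph{not} uniform along the smoothing sequence, and no passage to the limit is possible from there. Your second half-sentence (H\"older/Young on the first-order source $\Phi_s^{2k-1}Z^{-1}_{st}$ and a ``similar'' contribution from $\bb{Z^\mu}{\partial_\mu Z^{-1}}_{st}$) already shows you know an Euler--Taylor expansion of the rough integral $\int_s^t \Phi^{2k-1}\,\dd Z^{-1}$ is needed, not a Bochner estimate; but then the whole burden shifts to bounding the associated remainder, and neither Proposition~\ref{pro:apriori} nor the sketch you give actually supplies that bound. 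You also drop some of the genuine second-order terms: the expansion must contain, besides $\bb{Z^\mu}{\partial_\mu Z^{-1}}_{st}$, the cross term $Z^{-1}_{st}Z^i_{st}\partial_i\Phi_s^{n-1}$ (first level of $\Q$ acting on the additive piece) and the double-source term $\tfrac{n(n-1)}{2}(Z^{-1}_{st})^2\Phi_s^{n-2}$; only with the full expansion do Chen's relations close and does the remainder have the right $3\alpha$ order.

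What the paper does to supply the missing step is to view the problem through the iterated product formula: Proposition~\ref{pro:product} applied $n-1$ times produces a closed affine Euler--Taylor expansion for the triangular vector $(\Phi,\Phi^2,\dots,\Phi^n)$ (this is the paper's display \eqref{eq:Phi_n}), and the remainder $\Phi^{n,\natural}$ is estimated by applying $\delta$ and the Sewing Lemma, yielding the inductive bound $\|\Phi^{n,\natural}\|_{\mathcal D^\alpha_{Q^Z}}\lesssim n\,\rho_\alpha(\Z)\,\|\Phi^{n-1}\|_{\mathcal D^\alpha_{Q^Z}}$. The induction on the controlled-path norm of $\Phi^{n-1}$ is exactly the ``bootstrap'' you describe as the main obstacle, and it must be carried out explicitly: the paper's mechanism for it (product formula + Chen + Sewing + Proposition~\ref{pro:apriori} applied to the resulting closed affine RPDE, not to the ``rough source'') is not replaceable by a direct appeal to Proposition~\ref{pro:apriori} alone. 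Parts~\ref{transport_ii} and~\ref{transport_iii} of your proposal are in line with the paper's arguments (backward compatible coefficient triad in~\ref{transport_ii}; differentiation and the Sobolev embedding $W^{1,p}(\T^1)\hookrightarrow L^\infty$ in~\ref{transport_iii}), modulo the same remainder bookkeeping for $\partial_x\Phi$, which has a nontrivial zero-order multiplicative piece that is again handled via the iterated product formula.
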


\begin{proof}
	Assume first that $Z$ is in $C^1(0,T;W^{3,\infty})$. By consistency with Stieltjes integration, there exists a distributional solution $\Phi \in L^1([0,T]\times \mathbb T^d)$, and moeover, it follows by standard results that $\Phi$ is essentially bounded (in particular it lies in $L^\infty(L^p)$ for all $p\geq1$).
	We now aim to obtain a priori estimates for $\Phi $ in $L^\infty$, which only depend on $\rho _\alpha (\Z).$
	
	To wit, note that by \eqref{transport_gene} and the Sewing Lemma, there is a unique remainder term $\Phi ^\natural$ so that
	\[\Phi_{st} = Z^i_{st}\partial _i\Phi_s + Z_{st}^{-1} + \left(\frac12Z^i_{st}Z^j_{st}\partial _{ij}+ \bb{Z^\mu }{\partial _\mu Z}_{st}^i\partial _i\right)\Phi _s + \bb{Z^\mu }{\partial _\mu Z}^{-1}_{st} + \Phi ^{\natural}_{st}\]
	and the generalized Chen's relations \eqref{gene_chen} give for all $0\leq s\leq \theta\leq t\leq T$
	\[
	\delta \Phi ^\natural_{s\theta t}= Z^i_{\theta t}\partial _i\left(\Phi _{s\theta }- Z^j_{s\theta }\partial _j\Phi _s- Z_{s\theta }^{-1}\right)
	+ \left(\frac12Z^i_{\theta t}Z^j_{\theta t}\partial _{ij} + \bb{Z^\mu }{\partial _\mu Z}^i_{\theta t}\partial _i\right)\Phi _{s\theta }
	\]
	Applying now the remainder estimates, Proposition \ref{pro:apriori}, we obtain 
	\[
	|\Phi ^{\natural}_{st}|_{W^{-3,2}} \lesssim \rho _\alpha (\Z)(t-s)^{3\alpha }\,,
	\]
	and consequently:
	\[
	\|\Phi \|_{\mathcal D_{Q\{Z\}}^\alpha }\leq C\rho _\alpha (\Z)\,,
	\]
	where we let $\Q\{\Z\}$ be the unbounded rough driver \eqref{Q_Y} with coefficient path $\Z$, and we recall that $\mathcal D_{Q\{Z\}}$ is the controlled paths space associated with the latter.

	Applying Proposition \ref{pro:product} recursively, an easy induction shows that for each $n\geq 2,$ $\Phi ^n$ satisfies
	\begin{equation}
	\label{eq:Phi_n}
	\begin{aligned}
	&(\Phi^n)_{st} 
	= \left(Z^i_{st}\partial _i +\frac12Z^i_{st}Z^j_{st}\partial _{ij}+ \bb{Z^\mu }{\partial _\mu Z}_{st}^i\partial _i\right)(\Phi _s^n) 
	\\
	&\quad \quad 
	+ n\left(Z_{st}^{-1} + \bb{Z^\mu }{\partial _\mu Z}^{-1}_{st} + Z^{-1}_{st}Z^i_{st}\partial _i \right)\left(\Phi _s^{n-1}\right) + \frac{n(n-1)}{2}(Z^{-1}_{st})^2\Phi _s^{n-2}
	+ \Phi ^{n,\natural}_{st}
	\end{aligned}
	\end{equation}
	Again, applying $\delta$ to both sides, we have thanks to \eqref{gene_chen}
	\begin{multline}
	\label{delta_Phi}
	\delta \Phi ^{n,\natural}_{s\theta t}
	= Z^i_{\theta t}\partial _i\left(\Phi ^n_{s\theta }-Z^j_{s\theta }\partial _j\Phi ^n_s - nZ^{-1}_{s\theta }\Phi _s^{n-1}\right) + \left(\bb{Z^\mu }{\partial _\mu Z}^i_{\theta t} + \frac12Z^i_{\theta t}Z^j_{\theta t}\partial _{ij}\right)\Phi ^n_{s\theta }
	\\
	+ nZ^{-1}_{\theta t}\left(\Phi _{s\theta }^{n-1}- Z^i_{s\theta }\partial _i\Phi _s^{n-1}-(n-1)Z^{-1}_{s\theta }\Phi ^{n-2}_s\right)
	+ n \left(\bb{Z^\mu }{\partial _\mu Z}^{-1}_{\theta t}+Z^{-1}_{\theta t}Z^i_{\theta t}\partial _i\right)\Phi _{s\theta }^{n-1}
	\\
	+\frac{n(n-1)}{2}(Z^{-1}_{\theta t})^2\Phi^{n-2}_{s\theta}\,.
	\end{multline}
	By the continuity part of the Sewing Lemma, we obtain that
	\[
	\|\Phi ^{n,\natural}\|_{\mathcal D^\alpha _{Q^Z}}\lesssim n\rho _\alpha (\Z)\|\Phi ^{n-1}\|_{\mathcal D ^\alpha_{Q^Z} }\,,
	\]
	and therefore, by induction
	\[
	\|\Phi ^{n,\natural}\|_{W^{-3,1}}\lesssim n!\rho _\alpha (\Z)^n \|\Phi ^\natural\|_{W^{-3,1}}\lesssim n!\rho _\alpha (\Z)^n \|\Phi ^\natural\|_{W^{-3,2}}
	\leq C(n,\rho _\alpha (\Z))\,.
	\]
	
	We can now let $n=2m$  and test \eqref{eq:Phi_n} against $\phi =1$, the remainder $\langle \Phi ^{n,\natural},1\rangle$ being estimated by its $W^{-3,1}$-norm thanks to the previous computations.
	But then, the rough Gronwall Lemma asserts the existence of a constant $C_1\left (n,\rho _\alpha (\Z)\right )>0$
	such that
	\begin{equation}\label{est:Phi_n}
	\|\Phi \|_{L^\infty(L^n)}\leq \widetilde C_1\left (n,\rho _\alpha (\Z)\right )\,.
	\end{equation}
	This finishes the proof of \ref{transport_i}.\smallskip

	In order to show \ref{transport_iii},
	we are now going to show an a priori estimate derived from the equation satisfied by $\Psi _t(x):=\partial _x\Phi_t(x)$.
	The proof of is essentially similar to the previous one, but requires to introduce some notation.
	
	Denoting by $\Psi^{\natural}:=\partial_x\Phi^{\natural}$ and considering test functions of the form $-\partial _x\phi$ in \eqref{eq:Phi_n}, we infer the following Euler-Taylor expansion on $\Psi$:
	\begin{equation}
	\begin{aligned}
	\label{eq:Psi}
	&\Psi _{st}-\Psi^{\natural}_{st}
	\\
	&= Z_{st}^1\partial _i\Psi  + \partial_x Z^1_{st}\Psi + \partial _xZ^{-1}_{st}
	\\
	&\quad 
	+ \Big(\frac12(Z^1_{st})^2\partial _{xx} +\bb{Z^1}{\partial _x Z^1}_{st}\partial _x
	+(\partial _xZ^1_{st})Z^1_{st}\partial _x 
	+ \partial _x \bb{Z^1}{\partial _x Z^1}_{st}\Big)\Psi _s 
	\\
	&\quad \quad 
	+ \partial_x\bb{Z^1}{\partial_x Z^{-1}}_{st}  
	\\
	&=Y^1_{st}\partial _i\Psi  + Y^0_{st}\Psi + Y^{-1}_{st}
	\\
	&
	+ \Big(\frac12(Y^1_{st})^2\partial _{xx} + Y^0_{st}Y^1_{st}\partial _x +\bb{Y^1}{\partial_x Y^1}_{st}\partial _x +  \partial _x \bb{Z^1}{\partial _x Z^1}_{st}\Big)\Psi _s 
	+ \partial_x\bb{Z^1}{\partial_x Z^{-1}}_{st}  
	\end{aligned}
	\end{equation}
	where we introduce $Y^1:=Z^1,$ $Y^0:=\partial_x Z^1$, $Y^{-1}:= \partial_x Z^{-1}.$ Introducing the enhancement
	\begin{equation}\label{odd_enhancement}
	\begin{aligned}
	&\Y
	=\Big(\left[Y^{-1},Y^0,Y^1\right ];\left [\bb{Y^1}{\partial_x Y^{0}},\bb{Y^1}{\partial_x Y^{1}}\right]; \bb{Y^1}{\partial_x Y^{-1}}+\bb{Y^0}{Y^{-1}} \Big)
	\\
	&:=\Bigg(\left [\partial_x Z^{-1},\partial_x Z^1,Z^1\right ];
	\,
	\left[\partial_x\bb{Z^1}{\partial_x Z^1} - \frac12(\partial_x Z^1)^2
	,\bb{Z^1}{\partial_x Z^1}\right];
	\, 
	\partial_x\bb{Z^1}{\partial_x Z^{-1}}\Bigg),
	\end{aligned}
	\end{equation}
	is is immediate to see that $\Y$ satisfies assumption \ref{ass:solution_coef}.
	We hence find that the following rough equation holds
	\[ 
	\dd \Psi = (\dd \Y^1 \partial _x +\dd \Y^0)\Psi + \dd\Y^{-1} 
	\]
	in the sense of Definition \ref{def:Q} together with \eqref{Q_Y}.
	Hence, one sees that $\Psi$ satisfies a similar equation as $\Phi$, but this time with the zero-order multiplication term $Y^0:=\partial _x Z.$ Iterating Proposition \ref{pro:product} yields this time for $n\geq 2$ and $(s,t)\in \Delta$
	\[ 
	\begin{aligned}
	&(\Psi ^n)_{st} -\Psi ^{n,\natural}_{st}
	= \Big(Y^1_{st}\partial_x + n Y^0_{st}\Big)\Psi ^n + nY^{-1}_{st}\Psi^{n-1} 
	\\
	&\quad 
	+ \left(\frac12 (Y^1_{st})^2\partial_{xx} + \left [\bb{Y^1}{\partial_x Y^1}_{st}+ nY^1_{st}Y^0_{st}\right ]\partial_x +n\bb{Y^1}{\partial_x Y^0}_{st}   + \frac{n^2}{2}(Y^0_{st})^2\right) \Psi ^n_s
	\\
	&\quad + \Big(nY^1_{st}Y^{-1}_{st}\partial_x + n(n-1)Y^0_{st}Y^{-1}_{st} +n[\bb{Y^1}{\partial_x Y^{-1}}+\bb{Y^0}{Y^{-1}}]_{st}\Big)\Psi _s^{n-1} 
	\\
	&\quad \quad \quad \quad 
	+ \frac{n(n-1)}{2}(Y^{-1}_{st})^2\Psi _s^{n-2}\enskip.
	\end{aligned}
	\]
	We now aim to estimate the $\mathcal D^\alpha_{Q\{Y\}}$-norm of $\Psi^n$, for which it suffices to estimate the remainder $(\Psi ^{ n})^{\natural}$ (thanks to Proposition \ref{pro:apriori}). 
	The proof of the corresponding estimate hardly differs from that of \ref{transport_i} hence we content ourselves with sketching it. As for evaluating $\Phi^{\natural,n}$, we apply $\delta$ on both sides and find a relation similar to \eqref{delta_Phi}. By induction, we find an estimate on $\|\Phi^n\|_{\mathcal D_{Q\{Z\}}}$, and this permits us to test the equation on $\Psi^n$ against the constant $1.$ We obtain a relation of the form \eqref{pre_gron}, and by application of the rough Gronwall Lemma, the estimate
	\[ 
	\|\Psi\|_{L^\infty(L^n)}
	\lesssim C(n)\rho_\alpha(\Y)
	\lesssim \widetilde C_2\enskip.
	\]
	for a constant $\widetilde C_2=\widetilde C_2\left (n,\rho_\alpha(\Z),\rho_\alpha(\partial_x\Z)\right ) >0.$
	Combining with \ref{transport_i}, we infer the estimate
	\begin{equation}
	\label{cheap_sobolev}
	\|\Phi \|_{L^\infty(0,T; W^{1,n})}\leq \widetilde C_1+ \widetilde C_2\,.
	\end{equation} 
	On the other hand, the above computations show a bound on $\|\Psi \|_{\mathcal D^\alpha_{Q\{Z\}}}$, which a fortiori implies the following $C^\alpha(W^{-1,2})$-estimate
	\begin{equation}
	\label{est:D_alpha}
	\|\Phi \|_{C^\alpha (0,T; W^{-1,2})}\leq C(\rho _\alpha (\Z))\,.
	\end{equation}
	To show that $\Phi$ is essentially bounded, we let $n=2$ in the previous computations. The one-dimensional Sobolev inequality,  \eqref{cheap_sobolev} and a classical interpolation argument with \eqref{est:D_alpha} imply that 
	\begin{equation}
	\label{uniform_est_Phi}
	\|\Phi \|_{L^\infty([0,T]\times \T^1)}\leq C\left(n,d,\alpha ,\rho_\alpha (\Z),\rho _\alpha (\partial_x \Z)\right)\,,
	\end{equation} 
	with a constant depending on the indicated quantities, but not on $\Phi .$

	We can now conclude: by the assumptions on $\Z$, we can consider a sequence $\Z(m)\to _{\rho_\alpha }\Z$ such that $\rho _\alpha (\partial_x \Z(m))\leq C$ uniformly in $m\in\mathbb N.$ Therefore, we obtain the uniform estimate \eqref{uniform_est_Phi}. By a compactness argument similar to \cite[Section 4]{hocquet2018ito}, we can extract a subsequence such that the corresponding $\Phi (m_k)$ converges to a solution of \eqref{transport_gene}. Since the bound \eqref{uniform_est_Phi} is uniform, the limit $\Phi $ satisfies a similar estimate. This finishes the proof of \ref{transport_iii}.
	
	The property \ref{transport_ii} follows by an approximation argument, using similar stability results as in \ref{transport_i} (with a multiplictive rough input instead of an additive one) and the same compactness argument as in \ref{transport_iii}. We leave the details to the reader.
\end{proof}

\begin{remark}
	Stirling's formula $n!\sim \sqrt{2\pi n }(\frac{n}{e})^{n}$ implies that the constant $C(n)$ in \ref{transport_i} is estimated above by a constant times $n$.
\end{remark}

\begin{remark}
	At a formal level the solution $\Phi $ of \eqref{transport_gene} is given by the rough Duhamel formula
	\begin{equation}
	\label{mild_Phi}
	\Phi _t= \int _0^t \exp (\Z\cdot \nabla )_{rt}[\dd (\Z^{-1}_r)](x)\,,
	\end{equation}
	where $\exp (\Z\cdot \nabla )_{rt}$ denotes the flow map associated with the homogeneous rough transport equation.
	In \cite{catellier2016averaging}, it is shown that after time-integration, the latter has a regularizing effect with respect to space, at least if one assumes that (we let $d=1$)
	\begin{itemize}
		\item
		$Z^1_{t}$ is independent of the spatial variable and \textit{$(\rho ,\gamma )$-irregular}. 
		\item $Z^{-1}_t(x):= \sigma(x)\gamma_t$,  where $\gamma$ is also independent of space and $C^1(0,T)$, while $\sigma$ is only a distribution.
	\end{itemize}
	Assuming that $Z$ satisfies these conditions, it should be possible to take advantage of the time-roughness of the transport noise in \eqref{transport_gene}, e.g.\ to show similar results for coefficients with lower spatial regularity. We leave this problem for future investigations.
\end{remark}

\section{Proof of the second uniqueness result, Theorem \ref{thm:regular}}
\label{sec:uniqueness_2}

Let $d=1$ and $\X=(X,\LL)$ be as in the assumptions.
Introduce the following backward dual equation with unknown $m_t(x)$
\begin{equation}
\label{dual_m}
\left \{\begin{aligned}
&\dd m + (\partial _x(a\partial _xm) - b\partial_x m)\dd t= -\dd \B^*m\,,\quad \text{on}\enskip [0,T]\times \T^1\,,
\\
&m_T(x)=1\,,
\end{aligned}\right .
\end{equation}
where $\lambda \leq a(t,x)\leq \lambda^{-1}$ denotes an arbitrary family of coefficients as in Assumption \ref{ass:A}, $b$ belongs to $L^{2r}(L^{2q})$ with $(r,q)$ as in \eqref{conditions_strict}, and $\B=\B\{\X\}$ is the unbounded rough driver of Example \ref{example:URD}. The above equation is understood in the sense of Definition \ref{def:backward_solution}, with $\mathbf P=(P^1,P^2)$ as in Example \ref{exa:B_star}. Existence and uniqueness are guaranteed by Corollary \ref{cor:backward}.

\subsection{Boundedness below}

First, by Proposition \ref{pro:equivalence}, observe that $m$ is a solution of \eqref{dual_m} if and only if $\check m_t:=m_{T-t}$ solves the forward problem
\begin{align}
\label{dual}
&\dd \check m_t +\left(-\partial _x(\check a\partial _x\check m_t) + \check b\partial _x\check m\right)\dd t 
= (\dd\Y^1_t\partial _x  + \dd\Y^0_t) \check m_t \,,\quad \text{on}\enskip [0,T)\times \T^1\,,
\\
\nonumber
&\check m_0=1\enskip ,
\intertext{where $\check a(t,\cdot)=a(T-t,\cdot)$, $\check b(t,\cdot)=b(T-t,\cdot)$, and with the coefficients}
&\Y:=\left( \left [0,-\partial_x X,-X\right ]; \left [\partial_x\LL- \frac12(\partial_x X)^2, \LL\right ];0\right ).
\end{align}
This is indeed a consequence of the following identity
\[
\begin{aligned}
\bb{X}{\partial_{xx}X}
&=\partial _x\bb{X}{\partial_x X} - \bb{\partial _xX}{\partial _xX}
\\
&=\partial _x\LL - \frac12(\partial _xX)^2,
\end{aligned}
\]
valid when $t\mapsto X_t$ has finite variation, and in general by geometricity of $X$ (considering approximating sequences).

We are going to prove the following.
\begin{proposition}
	\label{clm:below}
	There exists $T_+ \in(0,T]$ such that $m$ remains positive up to $t=T_+$, namely
	\begin{equation}
	\label{m_bounded_below}
	\inf_{[0,T_+]\times \T^1} \check m>0\,.
	\end{equation} 
\end{proposition}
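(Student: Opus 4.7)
The plan, following the sketch in Subsection \ref{subsec:explication}, is to factor out an exponential weight so as to remove the zero-order multiplicative rough term $-\dd(\partial_x X)\,\check m$ appearing in \eqref{dual}, thereby reducing the positivity question to a parabolic equation with \emph{pure transport} rough input, where Theorem \ref{thm:boundedness} applies. Concretely, I would let $\Phi$ denote the $L^\infty([0,T]\times\T^1)$-solution of the rough transport equation \eqref{transport_intro_1D} provided by Theorem \ref{thm:transport}\ref{transport_iii} (corresponding to $Z^1 = -X,\, Z^{-1} = -\partial_x X$ in the notation of Section \ref{sec:transport}) and set
\[w_t(x) := e^{-\Phi_t(x)}\,\check m_t(x),\qquad (t,x)\in[0,T]\times\T^1.\]
Since $\Phi_0 = 0$ and $\check m_0 = 1$ one has $w_0 = 1$; moreover $e^{\Phi_t(x)} \geq e^{-\|\Phi\|_{L^\infty}} > 0$. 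Hence the proposition reduces to showing that $\|w-1\|_{L^\infty([0,T_+]\times\T^1)} \leq \tfrac12$ for some $T_+\in(0,T]$.

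To derive the equation for $w$, I would combine the Euler--Taylor expansion for $\mathcal{E} := e^{-\Phi}$ given by Theorem \ref{thm:transport}\ref{transport_ii} with the product formula (Proposition \ref{pro:product_backward}) applied to the pair $(\mathcal{E},\check m)$. A direct algebraic computation, in which the cancellation of the zero-order rough term relies on the geometric identity $\bb{X}{\partial_{xx}X} = \partial_x \LL - \tfrac12(\partial_x X)^2$ already used earlier in Section \ref{sec:uniqueness_2}, then shows that $w$ is an $L^2$-energy solution of a strongly parabolic rough equation
\[\dd w = \mathscr{M}w\,\dd t - \dd X\,\partial_x w,\qquad w_0 = 1,\]
where $\mathscr{M}$ is an operator of type \eqref{general_M} with the same ellipticity constant $\lambda$ as $\check a$ and with first- and zero-order coefficients built linearly from $\check a$, $\check b$ and $\partial_x \Phi$. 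The essential structural feature is that the zero-order multiplicative rough input has disappeared, only pure transport noise survives. Since Theorem \ref{thm:transport}\ref{transport_iii} gives $\partial_x \Phi \in L^\infty(0,T;L^n)$ for every $n < \infty$, the coefficients of $\mathscr{M}$ belong to $L^{2r}(L^{2q})$ and $L^r(L^q)$ for any pair $(r,q)$ satisfying the strict condition \eqref{conditions_strict}, so the hypotheses of both Theorem \ref{thm:parabolic_gene} and Theorem \ref{thm:boundedness} are met.

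Writing $\check z := w - 1$, the difference solves the same parabolic equation with $\check z_0 = 0$ and source terms $f^0,f^1$ obtained by evaluating $\mathscr{M}$ on the constant~$1$. By H\"older's inequality in time, $\|f^0\|_{L^1(0,T_+;L^1)} + \|f^1\|_{L^1(0,T_+;L^1)} = O(T_+^{\theta})$ for some $\theta > 0$, which tends to zero as $T_+ \to 0$. The energy estimate of Theorem \ref{thm:parabolic_gene} together with the interpolation inequality \eqref{interp_u} then forces $\|\check z\|_{L^{2r/(r-1)}(0,T_+;L^{2q/(q-1)})} \to 0$ as $T_+ \to 0$. Plugging both smallness statements into the $L^\infty$ estimate \eqref{L_infty_rho} of Theorem \ref{thm:boundedness} yields $\|\check z\|_{L^\infty([0,T_+]\times\T^1)} \to 0$, which gives \eqref{m_bounded_below} for $T_+$ small enough, in fact with a lower bound $\tfrac12 e^{-\|\Phi\|_{L^\infty}}$. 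The main technical obstacle is the rigorous derivation of the equation for $w$: one must carefully verify the Euler--Taylor expansion for the product $\mathcal{E}\cdot\check m$ in an appropriate Sobolev scale, matching the brackets inherited from the enhancement of $\X$ against those associated with $\mathcal{E}$ through Theorem \ref{thm:transport}\ref{transport_ii}. This is precisely where the assumption $\rho_\alpha(\partial_x \X) < \infty$ of Theorem \ref{thm:regular} enters, since it guarantees that the enhancement of \eqref{transport_intro_1D} used in Theorem \ref{thm:transport}\ref{transport_iii} is well-defined.
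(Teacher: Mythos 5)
Your proposal follows essentially the same route as the paper: factor $\check m = e^{\Phi}(1+z)$ with $\Phi$ the $L^\infty$-solution of the rough transport equation from Theorem~\ref{thm:transport}, show that $z$ (your $\check z = w-1$) solves a strongly parabolic rough PDE with \emph{pure transport} rough input and zero initial datum, then invoke the Moser-type $L^\infty$-estimate of Theorem~\ref{thm:boundedness} together with the moment bounds on $\Phi$ and $\partial_x\Phi$ and a H\"older/interpolation argument in $T_+$ to get the lower bound. The one slip is the citation for deriving the equation on $w$: both $\mathcal E = e^{-\Phi}$ and $\check m$ are \emph{forward} solutions (the backward-to-forward reduction via Proposition~\ref{pro:equivalence} has already been performed to pass from $m$ to $\check m$), so the relevant product formula is the forward one, Proposition~\ref{pro:product}, not Proposition~\ref{pro:product_backward}; this does not affect the validity of the argument.
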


\begin{proof}
	Let $\Phi $ be an $L^\infty$-solution of the transport equation 
	\[\left\{\begin{aligned}
	&\dd \Phi =-\dd \X \partial _x \Phi -\dd (\partial_x\X),\quad \text{on}\quad[0,T]\times \T^1,
	\\
	&\Phi_0=0,
	\end{aligned}\right.
	\]
	understood as \eqref{transport_gene} with $\Z$ being defined as
	\[ \Z:=\left( \left [-\partial_x X,0,-X\right ]; \left [0, \LL\right ];\partial_x\LL- \frac12(\partial_x X)^2\right ).
	\]
	and whose existence is guaranteed by Theorem \ref{thm:transport} (notice that $\Y$ and $\Z$ are indeed different since the input $-\partial_x X$ inside $\Z$ is of additive type, while it appears in \eqref{dual} as a multiplicative term). Since the last $d$- coefficients of $Y$ and $Z$ agree, we can use Proposition \ref{pro:product}.
	Introducing the new unknown, 
	\[
	z _t:= \exp(-\Phi _t)\check m-1\,,
	\]
	a simple calculation using \eqref{concl:prod} shows that $z$ is a solution of the following equation with transport rough input
	\begin{equation}
	\label{dual_sigma}
	\left\{
	\begin{aligned}
	&\dd z  +(\mathscr L z -\partial _xf^1 +f^0) \dd t 
	=-\dd \X _t\partial _xz _t\quad \text{on}\enskip [0,T]\times\T^1\,,
	\\
	&z _0(\cdot )=0\,,
	\end{aligned}\right.
	\end{equation}
	where
	\begin{equation}
	\mathscr L z:= \partial _x(\check a\partial _xz) + \partial _x(\check a\partial _x\Phi z ) +(\partial _x\Phi \check a-\check b)\partial _x z+ \check a(\partial _x\Phi)^2 z 
	\end{equation} 
	and
	\[\begin{aligned}
	&f^1:=\check a\partial _x\Phi 
	\\
	&f^0:=\check a(\partial _x\Phi)^2 -\check b\partial _x\Phi \,.
	\end{aligned}
	\]
	This corresponds to the ansatz  \eqref{parabolic_gene}.
	Indeed, we have for instance
	\[ \|f^1\|_{2r,2q}\leq\lambda^{-1}\|\partial_x \Phi\|_{2r,2q} \]
	and thanks to H\"older Inequality
	\[ \|f^0\|_{r,q}\leq \lambda^{-1}\|\partial_x\Phi\|_{2r,2q}^2 + \|b\|_{2r,2q}\|\partial_x\Phi\|_{2r,2q} \enskip.\]
	But the latter terms are estimated by Theorem \ref{thm:transport} which states that for arbitrary $p\in[1,\infty),$ there is a finite constant $C(p)$ such that
	$\|\Phi\|_{L^\infty(L^p)}+\|\partial_x\Phi\|_{L^\infty(L^p)} \leq C(p).$ Choosing $p\geq 2q$ therefore shows that $f^1,f^0$
	satisfy the hypotheses of Theorem \ref{thm:parabolic_gene}.
	Arguing similarly for the coefficients $F^1:=\check a\partial_x\Phi$, $b^1:=\partial_x \Phi\check a-\check b$ and $c:= \check a(\partial _x\Phi)^2$, we obtain the desired conclusion.

	We can now apply Theorem \ref{thm:boundedness}, jointly with Theorem \ref{thm:transport} (ensuring that $\|\Phi\|_{L^\infty([0,T]\times \T^1)}<\infty$): since by definition $\check m_t=\exp(\Phi _t)(1+z_t)$, we infer by the estimate \eqref{L_infty_rho} and H\"older inequality that
	\begin{multline*}
	\inf _{[0,T]}\check m 
	\geq \essinf _{[0,T]\times\T^1}\{\exp\Phi\} -  \esssup _{[0,T]\times\T^1}\big\{z\exp \Phi \big\}
	\\
	\geq \exp(-\|\Phi \|_{L^\infty}) 
	- C_\lambda \Big(\|\partial _x\Phi\|_{1,1}^{1/2} + \|\partial _x\Phi \|_{2,2}
	+ \|\check b\|_{2r,2q} + \|\partial _x\Phi\|_{\frac{2r}{2r-1},\frac{2q}{2q-1}}
	\\
	+\|z\|_{\frac{2r}{r-1},\frac{2q}{q-1}}\Big)\exp(\|\Phi \|_{L^\infty})\enskip.
	\end{multline*}
	Using H\"older and the interpolation inequality \eqref{interp_u}, we see that provided $\epsilon =\epsilon (r,q)>0$ is chosen as in the relation \eqref{choice_eps},
	then 
	\[
	\|z\|_{\frac{2r}{r-1},\frac{2q}{q-1}} \leq \|z\|_{\frac{2r}{r-1}(1+\epsilon ),\frac{2q}{q-1}(1+\epsilon )}\|1\|_{\frac{2r}{r-1}(1+\frac1\epsilon ),\frac{2q}{q-1}(1+\frac1\epsilon )}
	\leq \widetilde C T^{\delta (\epsilon )}
	\]
	where 
	\[
	\delta (\epsilon ):=
	\begin{cases}
	\frac{r-1}{2r(1+\frac1\epsilon )}\enskip \text{if}\enskip r\in(1,\infty)
	\\
	\frac{1}{1+\frac1\epsilon }\enskip \text{if}\enskip r=\infty\,.
	\end{cases}
	\]
	Proceeding similarly for the other terms, we see that choosing $0<T_+\leq T$ small enough (depending only on $r,q,$ $\|b\|_{2r,2q}$, $\lambda$ and $\rho_\alpha(\X),\rho_\alpha(\partial_x\X)$) guarantees that $\inf _{[0,T_+]\inf\T^d}\check m>0.$ The conclusion follows.
\end{proof}

\subsection{Proof of Theorem \ref{thm:regular}}
We now make use of the same notations as that of Section \ref{sec:uniqueness}.
For $n\in\mathbb N,$ going back to the relation \eqref{pre_gron}, we have for any $0\leq s\leq t\leq T$ and $\phi \in W^{3,\infty}$:
\begin{multline}
\Big(\int_{\mathbb T^1}\beta _n(v)\dd x\phi \Big)_{st}
+\iint_{[s,t]\times\T^1}\Big [\beta _n''(v)a(u^1)(\partial _xv)^2\phi
+a(u^1)\partial _j(\beta _n(v))\partial _x\phi
\\
+\beta _n''(v)\partial _xv\varDelta\partial _xu^2\phi + \beta '_n(v) \varDelta\partial _xu^2\partial _x\phi 
\Big]\dd x\dd r
\\
=\int_{\T^1}\beta _n(v)(B^{1,*}_{st} + B^{2,*}_{st})\phi \dd x + \langle \beta _n(v)^\natural,\phi \rangle
\end{multline}
where $\varDelta:=a(u^1)-a(u^2).$
Equivalently, one may write
\begin{multline}
\Big(\int_{\mathbb T^1}\beta _n(v)\dd x\phi \Big)_{st}
+\iint_{[s,t]\times\T^1}\beta _n''(v)\Big[a(u^1)(\partial _xv)^2 + \varDelta \partial _xv \partial _xu^2\Big]\phi \dd r\dd x
\\
=\int_s^t\Big \langle  [A - \partial _x(b_n\cdot )]\beta _n(v) + \partial _xg_n,\phi \Big\rangle
\\
+\int_{\T^1}\beta _n(v)(B^{1,*}_{st} + B^{2,*}_{st})\phi \dd x + \langle \beta _n(v)^\natural,\phi \rangle
\end{multline}
where, denoting by $a_n$ the decreasing sequence defined in \eqref{sequence_an},
we introduce the velocity and flux terms
\[\left[\begin{aligned}
&b_n(v):= \mathbf 1_{v>a_n}\frac{\beta _n'(v)\big(a(u^1)-a(u^2)\big)}{\beta _n(v)}\partial _xu^2 
\\
&g_n:=b_n(\beta '_n(v)v-\beta_n(v))\,.
\end{aligned}\right.
\]
It is easily seen that
\begin{equation}
\label{strong_bn}
b_n(v)\to b(v):= \mathbf 1_{v\neq 0}\frac{a(u^1)-a(u^2)}{v}\partial _xu^2\enskip \text{in}\enskip L^{2r}(L^{2q})\text{-strong.}
\end{equation} 
Moreover, using Taylor Theorem, \eqref{sequence_an} and \eqref{derivees} give the estimate:
\[
\mathbf 1_{v>a_n}|\beta _n'(v)v-\beta (v)|\leq C|v|
\int_{a_n/v}^{a_{n-1}/v}|v\beta _n''(\tau v)|\dd \tau 
\leq \widetilde C\,,\quad \dd t\otimes \dd x\text{-almost everywhere,}
\]
and thus the flux is estimated thanks to H\"older Inequality and \eqref{interpolation_inequality} as
\begin{equation*}
\|g_n\|_{L^2(L^2)}\leq \widetilde C\|b_n\|_{2r,2q}\|v\|_{\frac{2r}{r-1},\frac{2q}{q-1}}\leq \hat C_{r,q}\|b_n\|_{2r,2q}(\|v\|_{\infty,2}+\|\nabla v\|_{2,2})
\,.
\end{equation*} 
Since on the other hand, $g_n\to 0$ $\dd t\otimes \dd x$ almost everywhere, we see by dominated convergence that 
\begin{equation}
\label{strong_Fn}
g_n\to 0\quad \text{in}\enskip L^2(L^2)\text{-strong.}
\end{equation}

Next, for each $n\in\N,$ let $m^n$ be the solution of the dual backward problem:
\[\left\{
\begin{aligned}
&\dd m^n + (A + b_n\partial _x)m^n\dd t = (\dd \X\partial _x -\partial_x\X)m^n\quad \text{on}\enskip [0,T]\times\T^1\,,
\\
&m^n_T=1\,.
\end{aligned}\right.
\]
By the product formula, Proposition \ref{pro:product_backward}, we observe that
\begin{multline}
\Big(\int_{\mathbb T^1}\beta _n(v)m^n\dd x\phi \Big)_{st}
+\iint_{[s,t]\times\T^1}\beta _n''(v)a(u^1)(\partial _xv)^2m^n\dd x\dd r
\\
=\iint _{[s,t]\times\T^1}\Big[\beta _n''(v)\partial _xv\varDelta\partial _xu^2m^n + g_n\partial _xm^n \Big]\dd x\dd r
\end{multline}
for each $(s,t)\in\Delta .$
By \eqref{strong_bn}, \eqref{strong_Fn} and the continuity part of Theorem \ref{thm:parabolic_gene}, we obtain at the limit
\[\Big(\int_{\mathbb T^1}|v|m\dd x\phi \Big)_{st}\leq 0\,.
\]
where $m$ is the solution of the dual equation associated with $b.$ Since $b\in L^{2r}(L^{2q}),$ $m$ is bounded below by Proposition \ref{clm:below}. This shows that $v=0$ and finishes the proof of Theorem \ref{thm:regular}.\hfill\qed

\section{Higher regularity: proof of Theorem \ref{thm:existence_2}}
\label{sec:higher}
Our aim here is to prove the second existence theorem, Theorem \ref{thm:existence_2} for the one-dimensional quasilinear equation
\begin{equation}
\label{equation_quasilin_last}
\left\{
\begin{aligned}
&\dd u -\partial _x(a(t,x,u)\partial _xu)\dd t=\dd \X\partial _xu\,,\quad \text{on}\enskip (0,T]\times \T^1 \,,
\\
&u_0=u^0\in W^{1,2}(\T^1)\,.
\end{aligned}\right.
\end{equation}

Note that by definition of a solution, one has the following Euler-Taylor expansion in $W^{-2,2}$:
\begin{equation}
\label{equation_un_dev}
u_{st} - \int _s^t\partial _x(a(u)\partial _xu)\dd r
= \left(X_{st}\partial _x + \frac12(X_{st})^2\partial _{xx} + \LL_{st}\partial _x\right)u_s + u^\natural_{st}
\end{equation} 
where the remainder $u^{\natural}$ belongs to $\CC(0,T;W^{-3,2}).$

\paragraph*{The $L^\infty(0,T;W^{1,2})\cap L^2(0,T;W^{2,2})$ a priori estimate.}
For simplicity, in the sequel we denote by $a_x(u)= D_2a(t,x,u(t,x))$ and $a_{z}(u)=D_3a(t,x,u(t,x))$.
We follow the pattern of the proof of \ref{transport_iii} in Theorem \ref{thm:transport}:
taking spatial derivatives in \eqref{equation_un_dev}, we see that the new unknown $v:=\partial _xu$ satisfies
\begin{equation}
\label{eq:derivee_ex}
\begin{aligned}
\int_{\T^1}v_{st}\phi \dd x 
&+ \iint_{[s,t]\times\T^1}a(u)\partial _xv \partial _x\phi \dd x\dd r
\\
&
=\iint_{[s,t]\times\T^1}\partial _x(a_x(u)v+a_z(u)v^2)\phi \dd x\dd r
+\left\langle\left(X_{st}\partial _x + \partial _xX_{st}\right)v_s,\phi \right\rangle
\\
&
\quad \quad 
+\left\langle\left(\frac12X^2_{st}\partial _{xx} + \Big(\partial _xX_{st}X_{st}+\LL_{st}\Big)\partial _x + \partial _x\LL_{st}\right)v_s,\phi \right\rangle + \langle v^{\natural}_{st},\phi \rangle\,,
\\
&=:\int_{[s,t]\times \T^1}\partial _x(a_x(u)v+a_z(u)v^2)\phi \dd r\dd x
+\left\langle\left(B^{(1),1}_{st} + B^{(1),2}_{st}\right)v_s,\phi \right\rangle
+\langle v^{\natural}_{st},\phi \rangle
\end{aligned}
\end{equation} 
where for $n\in\N$ we denote by $\B^{(n)}$ the unbounded rough driver given by
\[\left[\begin{aligned}
B^{(n),1}_{st}
&:=X_{st}\partial _x  + n\partial _xX_{st}
\\
&=: Y_{st}^1\partial _x + nY^0_{st}
\\
B^{(n),2}_{st}
&:=\frac12(X_{st})^2\partial _{xx}+ (\LL _{st}\partial _x +n\partial _xX_{st}X_{st})\partial _x +n\partial _x\LL _{st}
\\
&=:\frac12(Y^1_{st})^2\partial _{xx} + \big(\bb{Y^1}{\partial _x Y^1 }_{st}+ nY^0_{st}Y^1_{st}\big)\partial _x + n\bb{Y^1}{\partial _xY^0}_{st} + n^2\frac12(Y^0_{st})^2
\end{aligned}\right.
\]
and where we let $\bb{Y^1}{\partial _xY^0}_{st}:=\LL_{st}$, while
$\bb{Y}{\partial _xY^0}_{st}:=\partial _x\LL_{st}-\frac12(\partial _xX_{st})^2.$
Taking the product of $v$ with itself, we see by the product formula given in \cite[Proposition 4.2]{hocquet2018ito} (this version is also contained in Appendix \ref{app:product}) that
\begin{equation*}
\begin{aligned}
\int_{\T^1}v^{2}_{st}\phi \dd x 
&+ 2\iint_{[s,t]\times \T^1}a(u)\big[(\partial _xv)^2 \phi + v\partial _xv \partial _x\phi \big]\dd x\dd r
\\
&
=-2\int_s^t\Big\langle N(v),\phi \Big\rangle\dd r
+\left\langle\left(B^{(2),1}_{st} + B^{(2),2}_{st}\right)v_s^n,\phi \right\rangle
+\langle v^{2,\natural}_{st},\phi \rangle
\end{aligned}
\end{equation*}
where we denote by $N(v)$ the non-linearity
\[
N(v):=a_x(u)v^2 + a_z(u)v^2\partial_x v -\partial_x(a_z(u)v^3) 
\]
Using Young Inequality, and estimating the $L^3$ and the $L^2$ norms by that of $L^4$, we have
for $|\phi |_{W^{1,\infty}}$
\[\begin{aligned}
\langle N(v),\phi \rangle
&:=\int_{\T^1}\big[a_x(u)v^2+a_z(u)\big(v^{2}\partial _xv\phi +v^{3}\partial _x\phi) \big]\dd x
\\
&\leq C_\lambda \big(\esssup_{t\in[0,T]}|a(t,\cdot,\cdot)|_{C^1_b}\big)(1+|v|^4_{L^4}) + \frac{1}{\lambda }|\partial _xv|^2\,.
\end{aligned}
\]

Thus, taking $\phi =1,$ and applying Proposition \ref{pro:apriori}, we get for $|t-s|\leq L$ small enough
\[\begin{aligned}
&\int_{\T^1}v^{2}_{st}\phi \dd x 
+ \lambda \iint_{[s,t]\times \T^1}\partial _xv^2 \dd x\dd r
\leq 
C\int_s^t[1+|v|^4_{L^4}]\dd r
\\
&\quad 
+\rho _\alpha (\partial_x \X)(t-s)^\alpha \Big[1+\lambda ^{-1}\iint_{[s,t]\times\T^1}\Big((\partial _xv)^2 + |v||\partial _x v| + 1+|v|_{L^4}^4\Big)\dd x\dd r\Big].
\end{aligned}
\]
Taking $L(\lambda ,\rho _\alpha (\X),\rho _\alpha (\partial_x X))>0$ smaller if necessary and then applying Lemma \ref{lem:gronwall} with $\varphi (s,t):=C\int_s^t(1+|v|^4_{L^4})\dd r$
we end up with the following inequality for $t>0$ small enough:
\[
E_t:=\sup_{s\in[0,t]}\int_{\T^1}v^{2}_s\phi \dd x 
+ \iint_{[0,t]\times \T^1}(\partial _xv)^2 \dd x\dd r
\leq C\left(t,\lambda ,\alpha ,L\right)\left (1+ |\partial _xu^0|^2_{L^2}+\int_0^t|v|_{L^4}^4\right )\,.
\]
and where the above constant is non-decreasing with respect to $t>0.$
But the one-dimensional Gagliardo-Nirenberg inequality asserts that
\[
|v|_{L^4}^4\lesssim |v|^3|\partial _xv| + |v|^4_{L^2}\leq C|v|^6_{L^2} + \frac{1}{2C(t,\lambda ,\alpha ,L)}|\partial _xv|^2
\]
and so, absorbing further to the left we obtain the following nonlinear-type relation:
\begin{equation}
\label{nonlinear_relation}
E_t\leq \widetilde C\left (1+\int_0^tE_t^3\right )\,,
\end{equation} 
for another such constant $\widetilde C>0.$
Integrating \eqref{nonlinear_relation}, we see that there exists a positive time $T_*\in(0,T]$ and  $\hat C>0$, both depending only on the quantities $\alpha ,\rho _\alpha (\X),\rho(\partial_x X),$ and $|u^0|_{W^{1,2}}$ such that
\begin{equation}
\label{energy_regular}
\|u\|_{L^\infty(0,T_*;W^{1,2})} + \|u\|_{L^2(0,T_*;W^{2,2})}\leq C\,.
\end{equation} 

\paragraph*{Conclusion.}
We can now repeat the compactness argument of Section \ref{sec:existence}.
Since $\X$ is geometric, it is easily seen by Assumption \ref{ass:geometric} that there is a time-smooth approximating sequence $\X(n)\in\mathscr C^\alpha $ such that $\partial _x\X(n)\in\mathscr C^\alpha $ for each $n\in\N$ while 
\[
\rho_\alpha (\X(n),\X)+ \rho _\alpha (\partial _x\X(n),\partial _x\X)\to 0\,.
\]
By \eqref{energy_regular} and the arguments used in the proof of the first existence theorem, we see that any limit point $u$ of the corresponding sequence of solutions $u(n)$ ought to solve the equation \eqref{equation_quasilin_last}.
At the limit, we see that \eqref{energy_regular} still holds for the same $T_*>0$, which means in particular that $\partial _xu$ belongs to $L^\infty(0,T_*;W^{1,2}).$
By the Sobolev embedding $H^1\hookrightarrow L^4$, the condition \eqref{integ:intro} is fulfilled for $\partial _xu$ and the exponents $(r,q)=(\infty,2)$, meaning that the above solution is unique, locally in time.
Now, repeating the argument shows the existence and uniqueness of the maximal solution, and this finishes the proof of the Theorem.
\hfill\qed

\begin{appendix}
	\setcounter{equation}{0}
	\renewcommand{\theequation}{\thesection.\arabic{equation}}
\section{Sewing Lemma}
\label{app:sewing}

For the reader's convenience, we recall the statement of the Sewing Lemma in a Banach space $(E,|\cdot|)$, as formulated for instance in \cite{gubinelli2010rough,deya2016priori}.

\begin{theorem}[Sewing Lemma]
	\label{thm:sewing}
	Let $H \colon \Delta \rightarrow E$ and $C>0$ be such that 
	\begin{equation}
	\label{a_gamma}
	\left|\delta H_{s\theta t}\right|\leq C\omega (s,t)^{a }\,
	, \quad 0 \leq s \leq \theta \leq t \leq T
	\end{equation}
	for some $a > 1$, and some control function $\omega ,$ and denote by $[\delta H]_{a,\omega }$ the smallest possible constant $C$ in the previous bound. 
	
	There exists a unique pair $I\colon [0,T] \rightarrow E$ and $I^{\natural} : \Delta \rightarrow E$ satisfying
	\[
	I_{t}-I_s = H_{st} + I_{st}^{\natural}
	\]
	where for $0\leq s\leq t\leq T$,
	\[
	|I_{st}^{\natural}| \leq C_a [\delta H]_{a,\omega } \omega (s,t)^{a}\,,
	\]
	for some constant $C_a$ only depending on $a$. In fact, $I$ is defined via the Riemann type integral approximation
	\begin{equation} \label{RiemannSum}
	I_t = \lim \sum_{ i=1 }^n H_{t^n_i t^n_{i+1}} \,,
	\end{equation}
	the above limit being taken along any sequence of partitions $\{t^n,n\geq 0\}$ of $[0,t]$ whose mesh-size converges to $0$.
\end{theorem}

\section{A product formula}
\label{app:product}

In this section, we show a product formula for two solutions of a similar problem.
Fix $p,p'\in [1,\infty]$ with $1/p+1/p'=1,$ and consider two controlled paths $u,v$
where
\begin{equation}
\label{paths_uv}
\begin{aligned}
&u\in L^\infty(0,T;L^p)\cap L^p(0,T;W^{1,p})\,,
\\
&v\in L^\infty(0,T;L^{p'})\cap L^{p'}(0,T;W^{1,p'})\,,
\end{aligned}
\end{equation}
and we assume furthermore that on $[0,T]\times \T^d$:
\begin{align}
\label{equation_u}
&\dd u= F\dd t + (\dd \X^i\partial _i + \dd \Y^0) u + \dd \Y^{-1},
\quad \text{in}\enskip L^p\,,
\intertext{while}
\label{equation_v}
&\dd v=G\dd t +(\dd \X^i\partial _i + \dd \Z^0) u + \dd \Z^{-1},
\quad \text{in}\enskip L^{p'}\,
\end{align}
for some $F\in L^1(0,T;W^{-1,p})$ and $G\in L^1(0,T;W^{-1,p'}),$ as energy solutions. The above is to be understood as the sytem of rough PDEs
\[\left\{
\begin{aligned}
&\dd u= F\dd t + \dd \Q\{\Y\}(u),
\\
&\dd v=G\dd t + \dd \Q\{\Z\}(v)
\end{aligned}\right.
\]
where $\Q\{\Y\}, \Q\{\Z\}$ are as in the formula \eqref{Q_Y}, with respective triads
(by convention $Y^i=X^i=Z^i$ for $i=1,\dots d$)
\[\begin{aligned}
&\Y=\Big([X^i]_{i=-1,0,\dots ,d}\,;\,[\bb{X^j}{\partial _j Y^i}]_{i=0,\dots ,d}\,;\,\bb{X^j}{\partial _j Y^{-1}}+\bb{Y^0}{Y^{-1}}\Big)
\\
&\Z =\Big([X^i]_{i=-1,0,\dots ,d}\,;\,[\bb{X^j}{\partial _j Z^i}]_{i=0,\dots ,d}\,;\,\bb{X^j}{\partial _j Z^{-1}}+\bb{Z^0}{Z^{-1}}\Big).
\end{aligned}
\]

The reason for restricting to this situation is twofold.
First, it is good enough since in all the manuscript, we are only interested in the case where $v=\beta (u)$ where $\beta $ is a Nemytskii operator (with the exception of the forward-backward formula proven in Section \ref{subsec:backward}). Indeed, in that case the transport part of the equation on $v$ will always be the vector field $\XX=X^i\partial _i$.
Second, as is quickly realized, is is generally not possible to obtain a product formula for \textit{any} pair of coefficients $\Y$ and $\Z$, at least without further knowledge such as the existence of a \textit{joint lift} for $(Y,Z)$ (providing values for crossed integrals). Even in the case when such joint lift is available, it is not clear how to write down a rough PDE of the form \eqref{rough_PDE_gene} for the pointwise product $uv$, and hence we prefer to leave this question for further investigations.

Prior to give the content of the product formula, it is convenient to introduce some notation.
\begin{notation}
	\label{nota:composition}
	As in Example \ref{example:bracket},
	denote by $V\mapsto \widehat V=V^i\partial_i$ the natural isomorphism between coefficients and vector fields (i.e.\ derivations). 
	If $A=(A^i_t(x))_{i=1,\dots d},\, B=(B_{t}(x))$ are such that the symbol $\bb{A}{B}$ is known, we adopt the following notation for conciseness	
	\[
	\bb{\widehat A}{B}_{st}:=\bb{A^i}{\partial_i B}_{st},
	\quad\forall(s,t)\in\Delta.
	\]
	Furthermore assuming the knowledge of $\bb{A}{B}$ for $i=1\dots d$, we introduce another bilinear operation ``$\odot$'', which combines integration with composition in the following way:
	\[
	\begin{aligned}
	&\bbb{\widehat A}{B}_{st}
	:=\bb{\widehat A}{B}_{st}
	+\bb{A^i}{B}_{st}\partial _i
	\\
	&\bbb{B}{\widehat A}_{st}
	:=\bb{B}{A^i}_{st}\partial _i \enskip.
	\end{aligned}
	\]
	
	Finally, if $\widehat V$ is another vector field, and if the symbols $\bb{A^i}{V^j}, \bb{A^i}{\partial _iV^j}$ are known, we define consistently:
	\[
	\bbb{\widehat A}{\widehat V}_{st}
	:=\bb{\widehat A}{ V^j}_{st}\partial_j
	+\bb{A^i}{B^j}_{st}\partial _{ij}\,.
	\]
	Namely, when $A,V$ have finite variation, the latter corresponds to the integral $\int_s^t \widehat{\dot A_r}\circ \widehat{V}_{sr}$.
\end{notation}

The main result of this appendix is the next proposition.
\begin{proposition}[Product formula]
	\label{pro:product}
	Let $u,v$ be as in \eqref{paths_uv}--\eqref{equation_v}, and assume that $\Y,\Z$ are geometric and compatible for product.
	\[
	\begin{aligned}
	F=\partial _i  f^i +f^0\,,\quad f \in L^1(0,T;(L^p)^{d+1})\,,
	\\
	G=\partial _ig^i +g^0\,,\quad g \in L^1(0,T;(L^{p'})^{d+1})
	\end{aligned}
	\]
	Assume that for $i =0,\dots ,d,$ 
	\[\sup_{a\in \R^d\text{ with }|a|\leq 1}
	\left(\|\partial _i  u(\cdot )g^i(\cdot -a)\|_{L^1(0,T;L^1)}+
	\|f^i(\cdot -a) \partial _i  v(\cdot )\|_{L^1(0,T;L^1)}\right)<\infty\,.
	\]
	
	The following assertions hold.
	\begin{enumerate}[label=(\arabic*)]
		\item \label{app_Q_shift}
		The two-parameter mapping $\Q\equiv(Q_{st}^{1},Q_{st}^{2})_{(s,t)\in\Delta}$ defined for column vectors $U\in W^{2,p}(\T^d;(\R^3)^T)$ via
		\begin{equation*}
		\setlength{\arraycolsep}{1pt}
		Q^1(U) 
		:=
		\begin{pmatrix}
		\XX +Y^0&
		0 &
		0\\
		0 &
		\XX +Z^0&
		0\\
		Z^{-1} &
		Y^{-1} &
		\XX +Y^0 + Z^0
		\end{pmatrix}
		U + 
		\begin{pmatrix}
		Y^{-1}\\
		Z^{-1}\\
		0
		\end{pmatrix}
		\end{equation*}
		and 
		\begin{align*}
		\setlength{\arraycolsep}{1pt}
		&Q^2(U) 
		:= 
		\begin{pmatrix}
		\displaystyle (\XX+Y^0)^{\odot2} 
		&
		0 &
		0
		\\[0.9em]
		0 &
		\displaystyle (\XX+Z^0)^{\odot2}
		&
		0\\[0.7em]
		\substack{\displaystyle +\bb{(\XX+Z^0)}{Z^{-1}} 
			\\
			\displaystyle +Z^{-1}\XX+Z^{-1}Y^0} 
		&
		\substack{\displaystyle +\bb{(\XX+Y^0)}{Y^{-1}} 
			\\
			\displaystyle +Y^{-1}\XX+Y^{-1}Z^0} 
		&
		\displaystyle (\XX + Y^0+Z^0)^{\odot2} \\
		\end{pmatrix}
		U
		\\[0.7em]
		&\quad \quad \quad \quad \quad 
		+ 
		\begin{pmatrix}
		\bb{\XX}{Y^{-1}} + \bb{Y^0}{Y^{-1}}\\
		\bb{\XX}{Z^{-1}} + \bb{Z^0}{Z^{-1}}\\
		Y^{-1}Z^{-1}
		\end{pmatrix}\,,
		\end{align*}
		is an unbounded rough driver. Namely, denoting by $\widetilde Q^1:=Q^1-Q^1(0),$  $\Q$ satisfies the affine Chen's relations
		\[\begin{aligned}
		\delta Q^1_{s\theta t}=0
		\\
		\delta Q^2_{s\theta t}=\widetilde Q^1_{\theta t}\circ Q^1_{s\theta }\,
		\end{aligned}
		\quad \text{for}\enskip (s,\theta,t)\in \Delta_2.
		\]
		
		\item \label{app_prod_uv}
		The pointwise product $uv$ belongs to the controlled path space $\mathcal D_{Q},$
		and moreover the enlarged unknown
		\[
		U:=
		\begin{pmatrix}
		u\\
		v\\
		uv
		\end{pmatrix}
		\]
		satisfies the following system of equations
		\[
		\dd U= \big(F,G,uG +Fv\big)^{T}\dd t + \dd \Q^i(U) \,.
		\]
	\end{enumerate}
\end{proposition}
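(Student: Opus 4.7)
My plan follows the approach of Proposition \ref{pro:product_backward}: tensor the two solutions into $(u\otimes v)_t(x,y):=u_t(x)v_t(y)$, obtain a rough PDE on the doubled torus $\T^d\times\T^d$, and then restrict to the diagonal via mollification. The statement (1) is a purely algebraic prerequisite, while (2) requires the tensor argument together with the integrability hypotheses on $u g^i$ and $f^i v$.

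For (1), the identity $\delta Q^1_{s\theta t}=0$ is immediate: every entry of $Q^1$ is a linear combination of path increments $X^i_{st},Y^0_{st},Z^0_{st},Y^{-1}_{st},Z^{-1}_{st}$, each annihilated by $\delta$ by the first line of \eqref{gene_chen}. To check $\delta Q^2_{s\theta t}=\widetilde Q^1_{\theta t}\circ Q^1_{s\theta}$, I would inspect the nine blocks of the $3\times 3$ matrix and the affine part separately. The two diagonal entries of the form $(\widehat X+Y^0)^{\odot 2}$ and $(\widehat X+Z^0)^{\odot 2}$ satisfy the standard ``$\odot$-Chen'' identity
\[
\delta (\widehat{X}+Y^0)^{\odot 2}_{s\theta t}=(\widehat{X}+Y^0)_{\theta t}\circ (\widehat{X}+Y^0)_{s\theta},
\]
which is a direct consequence of the second line of \eqref{gene_chen} and of Notation \ref{nota:composition}. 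The bottom row is handled by combining the last two lines of \eqref{gene_chen} with bilinearity: the off-diagonal bottom entries such as $\bb{\widehat X+Z^0}{Z^{-1}}+Z^{-1}\widehat X+Z^{-1}Y^0$ are precisely the quantities whose coboundary produces $(\widehat X+Z^0)_{\theta t}Z^{-1}_{s\theta}+Z^{-1}_{\theta t}(\widehat X+Y^0)_{s\theta}$, matching the lower-triangular contribution of $\widetilde Q^1_{\theta t}\circ Q^1_{s\theta}$. All of this is bookkeeping at the level of \eqref{gene_chen}.

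For (2), I would introduce on $\T^d\times\T^d$ the doubled driver $\boldsymbol{\Gamma}=(\Gamma^1,\Gamma^2)$ whose action on a tensor product $U\otimes V$ (and on the inhomogeneous pieces) is built blockwise from $\Q[\Y]$ acting in the $x$-variable plus $\Q[\Z]$ acting in the $y$-variable, with an extra mixed contribution
\[
\widetilde Q[\Y]^1_{st}\otimes 1\,\circ\,1\otimes \widetilde Q[\Z]^1_{st}
\]
inserted in the second level, where the tilde denotes the linear part as in Definition \ref{def:Q}. A computation identical in spirit to the one carried out in the proof of Proposition \ref{pro:product_backward} shows that $\boldsymbol\Gamma$ satisfies the affine Chen relations on the doubled torus, and that $u\otimes v$ is a forward $L^1$-solution of
\[
\dd(u\otimes v)=(F\otimes v+u\otimes G)\dd t+\dd\boldsymbol\Gamma(u\otimes v)
\]
on $\T^d\times\T^d$, in the Euler--Taylor sense of \eqref{euler-taylor_Q}. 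The hypothesis $\sup_{|a|\leq 1}\|f^i(\cdot-a)\partial_iv\|_{L^1(L^1)}+\|\partial_i u\,g^i(\cdot-a)\|_{L^1(L^1)}<\infty$ is exactly what ensures that the right-hand drift makes sense in $L^1(0,T;W^{-1,1}(\T^d\times\T^d))$ after one integration by parts.

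To obtain the statement on the pointwise product, I would pair $u\otimes v$ against a mollified diagonal test function $\phi(x)\psi_\epsilon(x-y)$ with $\phi\in W^{2,\infty}(\T^d)$ and pass to $\epsilon\to 0$: the controlled-path structure of $u$ and $v$ (Proposition \ref{pro:apriori}) together with the integrability hypothesis above allow dominated convergence on each piece of the Euler--Taylor expansion, producing the announced equation on $uv$, with Gubinelli derivative read off the first-order truncation. The diagonal restriction of $\Gamma^1$ acting on $u\otimes v$ yields $(\widehat X+Y^0+Z^0)(uv)+vY^{-1}+uZ^{-1}$, while the diagonal of $\Gamma^2$, after reshuffling the mixed bracket $\widetilde Q[\Y]^1\otimes \widetilde Q[\Z]^1$ with the help of the bilinear identities of Notation \ref{nota:composition}, reproduces exactly the matrix action of $Q^2$ from the statement. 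The remainder $(uv)^\natural$ inherits its $\CC(0,T;W^{-3,1})$ regularity from $(u\otimes v)^\natural$ via the Sewing Lemma, since applying $\delta$ to the difference between $uv$ and its candidate expansion, and using the algebraic identities of (1), gives an element of $C^{3\alpha}_2$. The main obstacle, and the place where the argument is most delicate, is tracking the various diagonal contributions of the mixed second-level term $\widetilde Q[\Y]^1\otimes\widetilde Q[\Z]^1$: these contribute simultaneously to the interior block of $Q^2$ and to the bottom off-diagonal entries $\bb{\widehat X+Z^0}{Z^{-1}}+Z^{-1}\widehat X+Z^{-1}Y^0$ (and its $Y$-counterpart), so the combinatorics must be matched carefully against the ansatz of Proposition \ref{pro:product}.
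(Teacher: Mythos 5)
Your plan is correct and matches the paper's intended approach. The paper's own ``proof'' of Proposition~\ref{pro:product} consists of the single sentence that the result is an immediate generalization of [hocquet2018ito, Section~4], with details left to the reader, and that reference (as well as the companion Proposition~\ref{pro:product_backward} in this very paper) proceeds exactly as you describe: form the tensor $u\otimes v$ on $\T^d\times\T^d$, exhibit a rough driver $\boldsymbol{\Gamma}$ on the doubled torus built from $\Q[\Y]\otimes 1$, $1\otimes\Q[\Z]$ and the mixed term $\widetilde Q[\Y]^1\otimes\widetilde Q[\Z]^1$ in the second level, verify Chen's relations blockwise from \eqref{gene_chen}, and then restrict to the diagonal by pairing against $\phi(x)\psi_\epsilon(x-y)$ with the integrability hypotheses on $\partial_iu\,g^i$ and $f^i\,\partial_iv$ supplying the dominated convergence. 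Your reading of the algebraic content of part (1) — in particular that the bottom-row entries of $\delta Q^2_{s\theta t}$ must reproduce both the linear bottom-triangular contribution of $\widetilde Q^1_{\theta t}\circ\widetilde Q^1_{s\theta}$ and the term $\widetilde Q^1_{\theta t}(Q^1_{s\theta}(0))$ landing on the affine slot $Y^{-1}Z^{-1}$ — is the right bookkeeping, and you correctly identify the mixed second-level bracket as the delicate place where the diagonal restriction must be matched against the stated ansatz.
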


In particular, Proposition \ref{pro:product} implies the following Euler-Taylor expansion for the third component: for every $(s,t)\in \Delta$
\begin{multline}
\label{concl:prod}
(uv)_{st} - \int_s^t\big[uG +Fv\big]\dd r -(uv)^{\natural}
\\
= \Big(\XX_{st} + Y^0_{st}+Z^0_{st}\Big)(u_sv_s) + Y_{st}^{-1}v_s + Z_{st}^{-1}u_s 
\\
+\Big(Z^{-1}_{st}\XX_{st} 
+\bb{\XX}{Z^{-1}}_{st}
+\bb{Z^0}{Z^{-1}}_{st}
+Y^0_{st}Z^{-1}_{st}
\Big)u_s
\\
+\Big(Y^{-1}_{st}\XX_{st}+
\bb{\XX}{Y^{-1}}_{st}+\bb{Y^0}{Y^{-1}}_{st} +Y^{-1}_{st}Z^0_{st}\Big)v_s
\\
+\Big(\XXX_{st} + (Y^0_{st}+Z^0_{st})\XX_{st} +\bb{\XX}{Y^0}_{st}+\bb{\XX}{Z^0}_{st} +\frac12(Y^0_{st})^2 +Y^0_{st}Z^0_{st}+  \frac12(Z^0_{st})^2\Big)(u_sv_s)
\\
+Y^{-1}_{st}Z^{-1}_{st}
\end{multline}
where $(uv)^{\natural}$ belongs to $\CC(0,T;W^{-3,1}).$ 

\begin{proof}
	This is an immediate generalization of the results of \cite[Section 4]{hocquet2018ito}. Details are left to the reader.
\end{proof}

\begin{remark}
	The equation resulting from the product of two such paths $u,v$ is in general not closed, unless both elements are associated with a zero additive rough input, i.e.\ if $X^{-1}=Y^{-1}=0.$
	This appears as an inconvenient if one aims to use the remainder estimates, Proposition \ref{pro:apriori}, since the statement assumes an ansatz of the form
	\[
	\dd  U = F\dd t + \dd \Q (U).
	\]
	As item \ref{app_prod_uv} shows however, the equation on the vector-valued path $U:=(u,v,uv)$ \textit{does} have a closed form. Furthermore, the matrices $\widetilde Q^1,\widetilde Q^2$ are triangular, with allows to obtain remainder estimates for iterated products by a simple induction (see Section \ref{sec:transport} for an illustration of this idea).
\end{remark}

\end{appendix}

\section*{Acknowledgements}
A.H.\ would like to thank M.\ Hofmanov{\'a} for suggesting the sequence $\beta _n(\cdot )$ used in the proof of uniqueness.
A.H.\ was supported by Deutsche Foschungsgemeinschaft (DFG) via Research Unit (Forschergruppe) FOR 2402 ``Rough paths, stochastic partial differential equations and related topics'', and through grant CRC 910 ``Control of self-organizing nonlinear systems: Theoretical methods and concepts of application,'' Project (163436311).

\end{document}